\documentclass[12pt]{article}
\usepackage{amsmath}
\usepackage{amssymb,amsbsy,amsmath}
\usepackage{graphicx}
\usepackage{dsfont}
\usepackage{color}
\usepackage[margin=1in]{geometry}

\usepackage{pdfsync}
\usepackage{hyperref}
\usepackage{prettyref}

\allowdisplaybreaks[1]

\newrefformat{def}{Definition \ref{#1}}
\newrefformat{app}{Appendix \ref{#1}}

\newtheorem{theorem}{Theorem}
\newtheorem{proposition}{Proposition}
\newtheorem{lemma}{Lemma}
\newtheorem{corollary}{Corollary}
\newtheorem{remark}{Remark}

\newtheorem{definition}{Definition}
\newtheorem{conjecture}{Conjecture}

\renewcommand{\phi}{\varphi}

\newcommand{\nL}{\mathcal{L}}
\newcommand{\evonep}{f_1^{+}}
\newcommand{\evone}{f_1}

\newcommand{\smlhf}{\scriptscriptstyle -1/2}

\renewcommand{\P}{\mathbb{P}}
\newcommand{\E}{\mathbb{E}}

\newcommand{\Z}{\mathbb{Z}}
\newcommand{\R}{\mathbb{R}}

\newcommand{\cA}{\mathcal{A}}

\newcommand{\cT}{\mathcal{T}}

\def\ds1{\mathds{1}}
\renewcommand{\epsilon}{\varepsilon}
\newcommand{\eps}{\epsilon}

\newcommand{\wh}{\widehat}

\renewcommand{\tilde}{\widetilde}

\newlength{\minipagewidth}
\setlength{\minipagewidth}{\textwidth}
\setlength{\fboxsep}{3mm}
\addtolength{\minipagewidth}{-\fboxrule}
\addtolength{\minipagewidth}{-\fboxrule}
\addtolength{\minipagewidth}{-\fboxsep}
\addtolength{\minipagewidth}{-\fboxsep}

\newcommand{\Ber}{\mathop{\mathrm{Ber}}}

\newcommand{\beq}{\begin{equation}}
\newcommand{\eeq}{\end{equation}}

\newcommand{\beqa}{\begin{eqnarray}}
\newcommand{\eeqa}{\end{eqnarray}}

\newcommand{\beqan}{\begin{eqnarray*}}
\newcommand{\eeqan}{\end{eqnarray*}}

\def\ba#1\ea{\begin{align*}#1\end{align*}} 
\def\banum#1\eanum{\begin{align}#1\end{align}} 

\newcommand{\PT}{\overline{\mathbb{P}}} 

\newcommand{\BlackBox}{\rule{1.5ex}{1.5ex}}  
\newcommand{\Typ}{\mathcal{T}_{n,p}}  
\newcommand{\conc}{\mathcal{R}}  
\newcommand{\Tr}{\mathrm{Tr}} 
\newenvironment{proof}{\par\noindent{\bf Proof\ }}{\hfill\BlackBox\\[2mm]}

\begin{document}
\title{Braess's paradox for the spectral gap in random graphs and delocalization of eigenvectors}
\author{Ronen Eldan
	\thanks{Weizmann Institute of Science; \texttt{ronen.eldan@weizmann.ac.il}.}
	\and
	Mikl\'os Z.\ R\'acz
	\thanks{University of California, Berkeley; \texttt{racz@stat.berkeley.edu}.}
	\and
	Tselil Schramm
	\thanks{University of California, Berkeley; \texttt{tschramm@cs.berkeley.edu}.}
}
\date{\today}

\maketitle

\begin{abstract}
We study how the spectral gap of the normalized Laplacian of a random graph changes when an edge is added to or removed from the graph. There are known examples of graphs where, perhaps counterintuitively, adding an edge can decrease the spectral gap, a phenomenon that is analogous to Braess's paradox in traffic networks. We show that this is often the case in random graphs in a strong sense. More precisely, we show that for typical instances of Erd\H{o}s-R\'enyi random graphs $G(n,p)$ with constant edge density $p \in (0,1)$, the addition of a random edge will decrease the spectral gap with positive probability, strictly bounded away from zero. To do this, we prove a new delocalization result for eigenvectors of the Laplacian of $G(n,p)$, which might be of independent interest.
\end{abstract}

\section{Introduction}
The spectral gap of the Laplacian of a graph is an important quantity that
relates to conductance properties of a graph. For instance, various notions of
mixing time of a random walk on a graph are intimately related to the inverse
of the spectral gap, which is known as the relaxation time (see,
e.g.,~\cite{levin2009markov}).  Generally speaking, one expects graphs with
more edges to have better conductance properties, and, accordingly, a larger
spectral gap.  However, perhaps counterintuitively, there are examples where
adding an edge to a graph decreases its spectral gap. For example, in the
barbell graph (two expanders connected by a single edge), adding an edge within
either one of the expanders will decrease the spectral gap.

This is analogous to Braess's paradox in traffic networks, which states that
the addition of a new road can increase congestion~\cite{braess1968paradoxon}.
Since its discovery in 1968, this phenomenon has been widely studied. There are
many works that give analytical conditions for when adding an edge does or does
not yield an improvement in congestion (see,
e.g.,~\cite{steinberg1983prevalence}), and recently there have been several
works studying the prevalence of this phenomenon in random
networks~\cite{valiant2010braess,chung2010braess,chung2012braess}. These works suggest that Braess's paradox is a common occurence in many
settings.

One of the two objectives of this paper is to study how the spectral gap of the normalized Laplacian of an
Erd\H{o}s-R\'enyi random graph $G(n,p)$ changes when an edge is added to the
graph.  We show that for fixed $p \in \left( 0, 1 \right)$, the addition of a
random edge will decrease the spectral gap with positive probability.  Our main
finding is thus that the counterintuitive phenomenon that is analogous to
Braess's paradox holds in a strong sense. Our proof of this result relies on
 a certain kind of
delocalization of the second eigenvector of the normalized Laplacian of $G(n,p)$.
Showing that this occurs and exploring further delocalization results is the second objective of the paper.

\subsection{A conjecture of F. Chung and our related results}\label{sec:chung}

Our paper is motivated by a question of Fan Chung; to state her conjecture and
our results precisely, we first introduce some notation.  For a graph $G =
\left( V, E \right)$, let $A \equiv A_G$ denote its adjacency matrix, and let
$D \equiv D_G$ denote the diagonal matrix with the degrees of the corresponding vertices on
the diagonal.  The (combinatorial) Laplacian of $G$ is defined as $L \equiv L_G = D - A$, and
the symmetric normalized Laplacian is defined as
$\nL\equiv \nL_G = D^{-1/2} L D^{-1/2} = I - \wh{A}$, where $\wh{A} = D^{-1/2} A D^{-1/2}$.
Let
$\lambda_1 \left( M \right) \leq \lambda_2 \left( M \right) \leq \dots \leq \lambda_n \left( M \right)$
denote the eigenvalues of an $n \times n$ symmetric matrix $M$ in increasing order.
It is easy to see that $\lambda_1 \left( L_G \right) = \lambda_1 \left( \nL_G \right) = 0$.
The value of $\lambda_2 \left( \nL_G \right)$ is called the \emph{spectral
gap of the normalized Laplacian}, or just the \emph{spectral gap} for short.

For a graph $G$, let $r_{-} \left( G \right)$ denote the fraction of edges
$e$ such that if the edge $e$ were to be removed from the graph, the spectral
gap decreases; also let $r_{+} \left( G \right) := 1 - r_{-} \left( G
\right)$, which is the fraction of edges such that if this edge were to be
removed from the graph, the spectral gap increases.  One might first
guess that for a reasonable graph $G$, $r_{-} \left( G \right)$ is close to $1$,
i.e., that the removal of a single edge will decrease the spectral gap for most
edges.  However, based on empirical evidence to the contrary, Fan Chung
conjectured that this is not the case for Erd\H{o}s-R\'enyi random graphs.

\begin{conjecture}[Chung, 2014]
 Let $p \in \left( 0, 1 \right)$ be fixed. There exists a constant $c = c\left( p \right) > 0$ such that
\[
 \P \left[ r_{+} \left( G \left( n, p \right) \right) \geq c \right] \to 1
\]
as $n \to \infty$.
\end{conjecture}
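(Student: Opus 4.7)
The plan is to reduce Chung's conjecture to a structural statement about the eigenvector $v_2$ associated with $\lambda_2(\nL_G)$, via first-order eigenvalue perturbation, and then to verify that statement using the new delocalization result advertised in the abstract. Specifically, I aim to show that the sign of $\Delta \lambda_2$ under removal of an edge $\{i,j\}$ is, to leading order, the sign of $v_2(i) v_2(j)$; and then to count the edges of $G$ along which $v_2$ shares a common sign.

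For the perturbation step, removing $\{i,j\}$ changes $\nL$ by $\Delta \nL$ whose dominant contribution is the flip of the $(i,j)$ and $(j,i)$ entries from $-1/\sqrt{d_i d_j}$ to $0$, together with subleading rescalings of rows and columns $i,j$ arising from the degrees $d_i, d_j$ each decreasing by one. Assuming $\lambda_2$ is simple (which should follow from standard bulk-spectrum estimates for $G(n,p)$), first-order perturbation gives $\Delta \lambda_2 = v_2^T(\Delta \nL) v_2 + (\text{higher order})$. Using the identity $\sum_{l \sim i} v_2(l)/\sqrt{d_l} = (1-\lambda_2)\sqrt{d_i}\, v_2(i)$, which is immediate from $\nL v_2 = \lambda_2 v_2$, a direct computation yields
\[
v_2^T(\Delta \nL) v_2 \;=\; \frac{2\, v_2(i)\, v_2(j)}{\sqrt{d_i d_j}} \;-\; (1-\lambda_2)\!\left(\frac{v_2(i)^2}{d_i} + \frac{v_2(j)^2}{d_j}\right) + (\text{lower order}).
\]
For constant $p$ one has $d_i, d_j \approx np$ and $1 - \lambda_2 = O(1/\sqrt{np})$, so the leading piece is $\tfrac{2 v_2(i) v_2(j)}{np}$ whenever $|v_2(i) v_2(j)|$ is not too small.

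For the counting step, I use the new delocalization result (assumed to give $\|v_2\|_\infty \leq C(p)/\sqrt{n}$ with high probability, possibly up to a polylog factor). Combined with $\|v_2\|_2 = 1$ and the orthogonality $\sum_i \sqrt{d_i}\, v_2(i) = 0$, elementary estimates together with Paley--Zygmund applied to $v_2(i)^2 \mathbf{1}[\pm v_2(i) > 0]$ yield a set $S^* \subset [n]$ of size $|S^*| \geq c_1(p)\, n$ on which $v_2$ has a common sign and $|v_2(\cdot)| \geq c_2(p)/\sqrt{n}$; in particular $v_2(i) v_2(j) \geq c_2^2/n$ for every pair $\{i,j\} \subset S^*$. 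The expander-mixing/discrepancy property of $G(n,p)$ (holding w.h.p.\ uniformly in vertex subsets) then provides at least $c_3(p)\, n^2 p$ edges $\{i,j\} \in E(G)$ with $v_2(i) v_2(j) \geq c_2^2/n$, i.e.\ a constant fraction of $|E(G)| = \Theta(n^2 p)$. Along each such edge the leading perturbation is strictly positive and of order $1/(n^2 p)$, so $\Delta \lambda_2 > 0$, and removing the edge increases the spectral gap; hence $r_+(G) \geq c(p)$ with high probability.

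The main obstacle is controlling the higher-order perturbation terms uniformly over the $\Theta(n^2 p)$ candidate edges. The naive Kato-type bound $\|\Delta \nL\|^2/(\lambda_3 - \lambda_2)$ is inadequate, because for $G(n,p)$ with constant $p$ all non-zero eigenvalues of $\nL$ cluster in a window of width $O(1/\sqrt{np})$ around $1$, so typical bulk gaps are far smaller than the leading first-order term $1/(n^2 p)$ one needs to beat. Circumventing this will require a sharper analysis that exploits the essentially rank-two structure of $\Delta \nL$ --- for instance, an exact secular equation for the rank-two update, or a resolvent bound based on local laws for the spectrum of $\nL$. Establishing such an estimate uniformly in $\{i,j\}$, and not merely in expectation, is the delicate step.
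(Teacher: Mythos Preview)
The statement you are attempting to prove is a \emph{conjecture}; the paper does not prove it. What the paper establishes is the analogous statement for \emph{adding} an edge (Theorem~\ref{thm:main}, concerning $a_{-}$), and from this it deduces only the much weaker Corollary that $\limsup_n \P[r_{+}(G(n,p)) \ge c] \ge c$. So there is no ``paper's own proof'' of this statement to compare against; the full conjecture remains open.

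That said, it is instructive to compare your outline to the paper's argument for Theorem~\ref{thm:main}. Your first-order perturbation computation is morally the same as the paper's Lemma~\ref{lem:adding_edge}, but the paper replaces perturbation theory by the variational (Courant--Fischer) characterization: using $f = v_2(\nL_G)$ as a test vector for $\nL_{G_+}$ gives an \emph{exact} upper bound $\lambda_2(\nL_{G_+}) \le f_\perp^T \nL_{G_+} f_\perp / \|f_\perp\|^2$, with no higher-order error and no dependence on the gap $\lambda_3 - \lambda_2$. This completely sidesteps the obstacle you correctly identify. The asymmetry is that the variational inequality yields only upper bounds on $\lambda_2$; to show that removing an edge \emph{increases} $\lambda_2$ one would need a lower bound on $\lambda_2(\nL_{G_-})$, and the test-vector trick does not give that. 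This is precisely why the paper cannot upgrade Corollary~1 to the full conjecture, and why your perturbation route runs into the small-gap problem.

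Two further gaps in your proposal. First, you assume an $\ell_\infty$ bound $\|v_2\|_\infty \le C(p)/\sqrt{n}$ for the second eigenvector of the \emph{normalized} Laplacian; the paper explicitly notes (at the start of the proof of Theorem~\ref{thm:deloc}) that no such bound is known for $\nL_{G(n,p)}$, only for the unnormalized adjacency matrix. The paper's delocalization result (Theorem~\ref{thm:deloc}) is of a different type: it lower-bounds the magnitude of a $(1/2-\eta)$-fraction of the entries by $1/(\sqrt{n}\,\polylog n)$, but gives no upper bound on the large entries. Your Paley--Zygmund step, as written, uses the $\ell_\infty$ bound and would need to be reworked. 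Second, and more seriously, the obstacle you flag at the end is genuine and, as far as the paper is concerned, unresolved: the bulk eigenvalue gaps for $\nL_{G(n,p)}$ are far too small for a naive second-order remainder bound, and neither the rank-two secular equation nor local-law resolvent bounds are carried out here. Absent such control, the argument does not close, and this is exactly the reason the conjecture remains a conjecture.
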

\begin{remark}
This question is only interesting for the spectral gap of the \emph{symmetric normalized Laplacian}. 
The \emph{combinatorial Laplacian} can be written as a sum over edges of the graph of appropriate positive semidefinite matrices. 
Therefore when removing an edge from a graph, the spectral gap of the combinatorial Laplacian cannot increase. 
\end{remark}

In a similar vein, it is natural to ask how the spectral gap changes when an
edge is added between two nodes that were not previously connected.  For a
graph $G$ one can define the quantities $a_{+} \left( G \right)$ and $a_{-}
\left( G \right)$ as the proportion of ``non-edges'' (pairs of points not connected by an edge) for which adding an edge in its place increases or decreases the spectral gap, respectively.
The main result
of this paper mirrors Chung's conjecture in the case of adding a single edge to
a random graph.

\begin{theorem}\label{thm:main}
 Let $p \in \left( 0, 1 \right)$ be fixed. There exists a constant $c = c\left( p \right) > 0$ such that
\[
 \P \left[ a_{-} \left( G \left( n, p \right) \right) \geq c \right] \to 1
\]
as $n \to \infty$.
Moreover, one can take $c = 1/8 - \eta$, for any constant $\eta > 0$.
\end{theorem}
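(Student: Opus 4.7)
The plan is to use first-order eigenvalue perturbation theory to track how the second-largest eigenvalue $\mu_2:=\lambda_{n-1}(\hat{A})$ of $\hat{A}_G$ changes when a non-edge $(i,j)$ is inserted into $G$. Since the spectral gap equals $1-\mu_2$, it decreases precisely when $\mu_2$ increases. With high probability over $G\sim G(n,p)$, the eigenvalue $\mu_2$ is simple, and $\mu_2\to 0$ as $n\to\infty$ by standard spectral estimates for $G(n,p)$; I will denote its unit eigenvector by $v$.

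The first step is to compute $\Delta:=\hat{A}'-\hat{A}$, where $\hat{A}'$ is the normalized adjacency of the graph after adding edge $(i,j)$. Taylor expanding $(d_i+1)^{-1/2}$ and $(d_j+1)^{-1/2}$ around $d_i^{-1/2}, d_j^{-1/2}$ gives, to leading order, $\Delta_{ij}=\Delta_{ji}\approx 1/\sqrt{d_id_j}$, $\Delta_{il}\approx -\hat{A}_{il}/(2d_i)$ and $\Delta_{jl}\approx -\hat{A}_{jl}/(2d_j)$ for $l\notin\{i,j\}$, and zero elsewhere. Substituting into $v^T\Delta v$ and using $\hat{A}v=\mu_2 v$ together with $\hat{A}_{ii}=\hat{A}_{jj}=\hat{A}_{ij}=0$ (the last because $(i,j)$ is a non-edge), one obtains
\[
v^T\Delta v \;=\; \frac{2v_iv_j}{\sqrt{d_id_j}}-\frac{\mu_2 v_i^2}{d_i}-\frac{\mu_2 v_j^2}{d_j}\;\approx\;\frac{1}{np}\bigl(2v_iv_j-\mu_2(v_i^2+v_j^2)\bigr).
\]
The second-order perturbative correction $\sum_{k\ne 2}(v_k^T\Delta v)^2/(\mu_2-\mu_k)$ can be bounded by $O(n^{-5/2})$ using $\|\Delta v\|^2=O(n^{-3})$, delocalization of all eigenvectors (which gives $(v_k^T\Delta v)^2=O(n^{-4})$ for the individual edge eigenvalues), and the separation of $\mu_2$ from the bulk. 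This is strictly smaller than the signal $|v^T\Delta v|=\Theta(n^{-2})$, so the sign of $\mu_2(\hat{A}')-\mu_2(\hat{A})$ matches the sign of $v^T\Delta v$, and the task reduces to counting non-edges satisfying $2v_iv_j>\mu_2(v_i^2+v_j^2)$.

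The key input is a quantitative delocalization statement for $v$: with high probability there exist constants $0<c<C$, depending only on $p$, such that $\|v\|_\infty\le C/\sqrt{n}$ and the sign-bulk sets
\[
V_+:=\bigl\{i:v_i\ge c/\sqrt{n}\bigr\},\qquad V_-:=\bigl\{i:v_i\le -c/\sqrt{n}\bigr\}
\]
each satisfy $|V_\pm|\ge(1/4-\eta/2)\,n$. Granting this, for any pair $(i,j)$ with both indices in $V_+$ or both in $V_-$ we have $2v_iv_j\ge 2c^2/n$ while $\mu_2(v_i^2+v_j^2)\le 2\mu_2 C^2/n$; since $\mu_2=o(1)$, the former strictly dominates for $n$ large, giving $v^T\Delta v>0$. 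The number of such unordered pairs is at least $2\binom{(1/4-\eta/2)n}{2}\ge(1/8-\eta)\binom{n}{2}$, and because the edge indicators $A_{ij}$ are approximately independent of the macroscopic sign structure of $v$, the same proportion persists when restricting to non-edges. This yields $a_-(G(n,p))\ge 1/8-\eta$ with high probability.

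The main obstacle is establishing the bulk delocalization statement: the standard $\ell^\infty$ bound only rules out atypically large coordinates, whereas we need to show that a constant fraction of coordinates of each sign are bounded below in magnitude by $c/\sqrt{n}$. I would approach this by coupling $\hat{A}$ with the centered Wigner-type matrix $(A-pJ)/\sqrt{np(1-p)}$ and invoking eigenvector universality (or a direct local semicircle / resolvent argument) to show that, on macroscopic scales, $v$ is distributionally close to a uniformly random unit vector orthogonal to $D^{1/2}\mathbf{1}$; any such Gaussian-like vector has a positive-density set of coordinates of each sign exceeding $c/\sqrt{n}$ for small $c>0$. A minor side technicality---justifying the variational use of $v$ as a test vector for the perturbed operator despite the small shift in the top eigenvector---is handled by Davis--Kahan applied to the well-separated top eigenpair of $\hat{A}$.
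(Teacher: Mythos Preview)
Your overall skeleton matches the paper's: reduce the spectral-gap question to a sign/magnitude condition on pairs of entries of the second eigenvector, then invoke delocalization to count good pairs. Two points deserve comment.

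For the reduction, the paper does not use Rayleigh--Schr\"odinger perturbation theory. It uses $f$ (the second eigenvector of $\nL_G$) directly as a test vector for $\nL_{G_+}$ after projecting off the new top eigenvector $f_1^+$, obtaining an exact inequality (Lemma~\ref{lem:adding_edge}, Proposition~\ref{propsec3}). This matters because your perturbation $\|\Delta\|$ is of order $n^{-1}$ while the gap $\mu_2-\mu_3$ at the spectral edge is only of order $n^{-7/6}$, so the series you write down is not a priori valid. The variational route sidesteps this entirely, and because $f_1^+ \propto D_+^{1/2}\mathds{1}$ is explicit and differs from $f_1 \propto D^{1/2}\mathds{1}$ only in two coordinates, one computes $|\langle f, f_1^+\rangle| = O((np)^{-3/2})$ directly---much sharper than the $O(n^{-1})$ a black-box Davis--Kahan argument gives, and small enough that the projection error is genuinely lower order than the signal $\Theta(n^{-2})$.

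The delocalization step is where the real work lies, and here your proposal has a gap. You assume $\|v\|_\infty \le C/\sqrt{n}$ with a constant $C$, entries bounded below by $c/\sqrt{n}$ with a constant $c$, and that each sign class separately contains $(1/4-\eta/2)n$ indices. The $\ell_\infty$ bound is \emph{not known} for eigenvectors of the normalized Laplacian (the paper explicitly flags this and works around it via the $\ell_2$ constraint), and the per-sign lower bound is unnecessary, since convexity of $x\mapsto\binom{x}{2}$ already gives the same $1/8$ from $|V_+|+|V_-|\ge(1/2-\eta)n$. More substantively, your proposed proof via eigenvector universality / coupling to a Wigner matrix is heavy machinery whose extension to $\wh A$ (with its degree-induced dependencies) is itself a serious project, not a lemma one can invoke. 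The paper instead proves what is actually needed---at least $(1/2-\eta)n$ entries of $v_2$ have magnitude at least $1/(\sqrt{n}(\log n)^C)$, Theorem~\ref{thm:deloc}---by a direct anticoncentration argument: a Littlewood--Offord bound shows that for a fixed ``localized'' candidate vector $x$, the coordinates of $Ax$ on the small-entry set rarely all land in a short interval; independence across those rows tensorizes this; and a carefully designed net over localized candidates closes the union bound. This is the main technical contribution of the paper and is precisely what your sketch is missing.
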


As a corollary, we get the following weaker version of Chung's conjecture.

\begin{corollary}
 Let $p \in \left( 0, 1 \right)$ be fixed. There exists a constant $c = c\left( p \right) > 0$ such that
\[
\limsup_{n \to \infty} \P \left[ r_{+} \left( G \left( n, p \right) \right) \geq c \right] \geq c.
\]
\end{corollary}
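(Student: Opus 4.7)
The plan is to deduce the corollary from Theorem \ref{thm:main} via a symmetry argument that relates $\E[r_+(G(n,p))]$ to $\E[a_-(G(n,p))]$, followed by a reverse Markov step.

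The core observation is bijective: a non-edge $e$ of a graph $G$ contributes to $a_-(G)$ (adding $e$ strictly decreases the spectral gap) exactly when, in the augmented graph $G\cup\{e\}$, removing the edge $e$ strictly increases the gap, which in particular witnesses the $r_+$-property of $G\cup\{e\}$. To turn this into a comparison of expectations, fix a pair $e^*\in\binom{V}{2}$, write $m:=\binom{n}{2}$, and decompose $G(n,p)$ into an independent pair $(G^0,X)$, where $G^0$ is $G(n,p)$ restricted to the remaining $m-1$ pairs and $X=\ds1\{e^*\in E(G(n,p))\}\sim\Ber(p)$. With $C^*:=\{\lambda_2(\nL_{G^0+e^*})<\lambda_2(\nL_{G^0})\}$, a function of $G^0$ alone, a direct unwinding of definitions identifies the event ``$e^*\in E(G(n,p))$ and removing $e^*$ strictly increases the gap'' with $\{X=1\}\cap C^*$, and ``$e^*\notin E(G(n,p))$ and adding $e^*$ strictly decreases the gap'' with $\{X=0\}\cap C^*$. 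Summing over all $e^*$ by symmetry then gives
\[\E\bigl[r_+(G)\,|E(G)|\bigr]\ \geq\ mp\,\P[C^*]\qquad\text{and}\qquad \E\bigl[a_-(G)\,(m-|E(G)|)\bigr]\ =\ m(1-p)\,\P[C^*];\]
the first relation is an inequality only because $r_+$ counts all non-decreasing edges, not merely the strictly increasing ones.

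Eliminating $\P[C^*]$ yields $\E[r_+(G)\,|E(G)|]\geq (p/(1-p))\,\E[a_-(G)\,(m-|E(G)|)]$. Since $|E(G(n,p))|$ has mean $pm$ and standard deviation $O(\sqrt m)$, and $r_+,a_-\in[0,1]$, a routine moment bound gives $\E[r_+(G)\,|E(G)|]=pm\,\E[r_+(G)]+O(\sqrt m)$ and the analogous identity for $a_-$, which after division produces $\E[r_+(G(n,p))]\geq \E[a_-(G(n,p))]-O(1/\sqrt m)$. Theorem \ref{thm:main} implies $\E[a_-(G(n,p))]\geq 1/8-\eta-o(1)$ for any fixed $\eta>0$, so $\E[r_+(G(n,p))]\geq c'$ for some constant $c'=c'(p)>0$ and all $n$ sufficiently large.

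Finally, the reverse Markov inequality ($\P[Y\geq t]\geq \E[Y]-t$ for $Y\in[0,1]$) applied to $Y=r_+(G(n,p))$ with $t=c'/2$ produces $\P[r_+(G(n,p))\geq c'/2]\geq c'/2$ for all $n$ large, giving the corollary with $c:=c'/2$. The only substantive step is the symmetry identity relating $r_+$ to $a_-$; the edge-count concentration and reverse Markov step are routine, so I do not anticipate a serious obstacle.
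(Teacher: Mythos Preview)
Your argument is correct. The paper does not write out a proof of the corollary at all; it is stated as an immediate consequence of Theorem~\ref{thm:main}, so there is no detailed proof to compare against. Your approach --- coupling on a single fixed pair $e^*$ to identify the event ``adding $e^*$ to $G^0$ strictly lowers the gap'' with ``removing $e^*$ from $G^0+e^*$ strictly raises it'', then summing over pairs to relate $\E[r_+(G)\,|E(G)|]$ and $\E[a_-(G)\,(m-|E(G)|)]$, followed by edge-count concentration and a reverse Markov step --- is exactly the natural way to make the deduction precise. In fact you obtain the stronger conclusion $\liminf_{n\to\infty}\P[r_+(G(n,p))\geq c]\geq c$, not just the $\limsup$ asserted in the corollary.

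One minor remark: the step ``$\E[r_+(G)\,|E(G)|]=pm\,\E[r_+(G)]+O(\sqrt m)$'' is justified by $|\E[r_+(G)(|E(G)|-pm)]|\leq \E[\,||E(G)|-pm|\,]\leq \sqrt{mp(1-p)}$ since $r_+\in[0,1]$, so this is indeed routine; it would do no harm to say this in one line rather than leave it implicit.
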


\begin{remark}
While we state our results for constant $p \in \left( 0, 1 \right)$, 
our proofs show that they hold also when $p = n^{-\eps}$ for some small $\eps >0$. 
We did not try to optimize the dependence on $p$, 
and it is possible that a similar approach could show that these results hold for smaller $p$ as well. 
\end{remark}

\subsection{Delocalization of eigenvectors}

It turns out that the proof of Theorem~\ref{thm:main} can be reduced to a
question about the second eigenvector of the normalized Laplacian of $G(n,p)$.
While the eigenvalues of matrices associated with $G(n,p)$ are very well understood
(see, for example,~\cite{chung2004eigenvalues,chung2011eigenvalues} and the references therein),
little is known about the corresponding eigenvectors. 
It is widely believed that these eigenvectors are typically \emph{delocalized},
in the sense that most of their mass is concentrated on entries whose magnitudes are roughly of the same order.
There are several ways of formalizing this intuition.

Perhaps the most common approach is to bound the $\ell_q$ norms of
eigenvectors.\footnote{We use the notation $\ell_q$ instead of the classical
$\ell_p$ in order to avoid confusion with the edge probability $p$.} If $v$ is
an eigenvector of a matrix $M$ with $\left\| v \right\|_2 = 1$, then a lower bound
on its $\ell_q$ norm is given by the relation between the norms:
for $q \geq 2$,
we have
$\left\| v \right\|_q
\geq n^{-1/2 + 1 / q} \left\| v \right\|_2
= n^{-1/2 + 1 / q}$.
If it can be shown that with high probability $\left\| v \right\|_q$
is \emph{at most} $n^{-1/2 + 1/q}$ times a polylogarithmic factor of $n$, then
the eigenvector $v$ is said to be \emph{delocalized in the $\ell_q$ sense}.

Recently there have been several works that have shown delocalization of
eigenvectors of $A_{G(n,p)}$ in the sense above. 
In particular, Erd\H{o}s~et~al.~\cite{erdHos2013spectral} showed that 
there exists a constant $C$ such that 
for every fixed $p \in \left( 0, 1 \right)$
we have 
$\left\| v \right\|_\infty 
\leq
\frac{\left( \log \left( n \right) \right)^C}{\sqrt{n}}$ 
for all unit eigenvectors $v$ with high probability. 
See also~\cite{dekel2011nodal,aroraeigenvectors,tran2010eigenvectors}. 
More widely, the $\ell_q$-delocalization of eigenvectors of Wigner matrices and more
general classes of random matrices has been intensively studied in the recent
past, see, for example,~\cite{tao2010deloc,tao2011deloc,tao2012deloc,rudelson2014deloc}.

However, in order to prove Theorem~\ref{thm:main}, we require a complementary sense
of delocalization of the second eigenvector of $\nL_{G(n,p)}$.  Namely, we need
to show that a constant fraction of the entries of the eigenvector have
magnitude on approximately the same order. While some results in this vein are known for the
\emph{first} eigenvector of the adjacency matrix~\cite{mitra2009entrywise} (and follow
in a straightforward manner from bounds on the degrees for the first
eigenvector of the symmetric
normalized Laplacian), to our knowledge there are no previous results about the
remainder of the spectrum. We obtain the following result.

\begin{theorem}\label{thm:deloc}
Let $v_2$ denote the second eigenvector of $\nL_{G\left( n , p \right)}$.
For every fixed $p \in \left( 0, 1 \right)$
and fixed $\eta \in \left( 0, 1/2 \right)$
there exists a constant $C = C \left( p, \eta \right)$
such that
\[
 \P \left[ \frac{1}{n} \# \left\{ i \in \left[ n \right] : \left| v_2 \left( i \right) \right| \geq \frac{1}{\sqrt{n} \left( \log\left( n \right) \right)^C } \right\} \geq 1/2-\eta \right] \to 1,
\]
as $n \to \infty$.
\end{theorem}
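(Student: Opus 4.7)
The plan is a leave-one-out argument combined with an anti-concentration inequality for weighted sums of Bernoullis. Fix an index $i \in [n]$, let $G^{(i)}$ denote the graph obtained by deleting vertex $i$, and let $\tilde v^{(i)}$ be the second eigenvector of $\nL_{G^{(i)}}$. The high-level intuition is that the restriction of $v_2$ to $V \setminus \{i\}$ is, up to a sign, close to $\tilde v^{(i)}$, so the $i$-th row of the eigenvector equation $\hat A v_2 = (1-\lambda_2) v_2$ essentially expresses $v_2(i)$ as a weighted sum of independent Bernoullis with deterministic coefficients determined by $\tilde v^{(i)}$. Quantitatively, using that the relevant degrees concentrate around $(n-1)p$ and that $1-\lambda_2 \asymp 1/\sqrt{np}$, one expects
\[
  v_2(i) \;\approx\; \frac{1}{(1-\lambda_2)\,np}\, S_i, \qquad S_i \;:=\; \sum_{j \neq i} A_{ij}\, \tilde v^{(i)}(j).
\]

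Conditional on $G^{(i)}$, the sum $S_i$ has variance $p(1-p) \|\tilde v^{(i)}\|_2^2 \asymp 1$, and by the $\ell_\infty$-delocalization of~\cite{erdHos2013spectral} applied to $\hat A^{(i)}$, its weights satisfy $\|\tilde v^{(i)}\|_\infty \leq (\log n)^C/\sqrt n$ with high probability. A Berry--Esseen bound then shows that the conditional distribution of $S_i$ is within Kolmogorov distance $O((\log n)^C/\sqrt n)$ of a Gaussian, so for any $C'>0$,
\[
  \P\!\left(|v_2(i)| \leq \tfrac{1}{\sqrt n (\log n)^{C'}} \,\Big|\, G^{(i)} \right)
  \;\leq\; O\!\left(\tfrac{1}{(\log n)^{C'}}\right) + o(1).
\]
Integrating over $G^{(i)}$ and summing over $i$, the expected number of ``bad'' indices (those where $|v_2(i)|$ falls below the threshold) is $O(n/(\log n)^{C'}) = o(n)$, so by Markov's inequality at most $\eta n$ indices are bad with probability $1-o(1)$. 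This yields at least $(1-\eta)n \geq (\tfrac12-\eta)n$ good indices, as claimed (after relabelling $C'$ as $C$).

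The main technical obstacle is making precise the approximation $v_2|_{V\setminus\{i\}} \approx \pm \tilde v^{(i)}$ needed in the first step. A direct Davis--Kahan bound is inadequate, because the eigenvalue spacing $|\lambda_3-\lambda_2|$ in $G(n,p)$ is only of order $1/n$, while the leave-one-out perturbation of $\hat A$ has operator norm of order $1/\sqrt n$. The standard workaround is to bypass explicit perturbation and instead use the Schur-complement identity
\[
  v_2(i)^2 \;=\; \frac{1}{1 + a^\top (1-\lambda_2 - B)^{-2}\, a},
\]
where $B$ is $\hat A$ with row and column $i$ removed and $a$ is the $i$-th column of $\hat A$ off the diagonal, and to combine this with a Hanson--Wright-type bound for the quadratic form in $a$ together with eigenvalue and (isotropic) eigenvector rigidity for $\nL_{G(n,p)}$. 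These tools give an upper bound of order $n\cdot\mathrm{polylog}(n)$ for $a^\top (1-\lambda_2-B)^{-2} a$, and hence the desired lower bound $|v_2(i)| \geq 1/(\sqrt n \cdot \mathrm{polylog}(n))$ that the leave-one-out heuristic predicts.
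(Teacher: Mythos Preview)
Your approach is genuinely different from the paper's, and it is not complete as stated.

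\textbf{What the paper does.} The paper never attempts a leave-one-out or resolvent argument. Instead it fixes a subset $W\subset[n]$ of size $(\tfrac12-\eta)n$, and bounds the probability that \emph{all} coordinates of $v_2$ outside $W$ are below $\delta/\sqrt n$. It builds an $\epsilon$-net $\Lambda_W$ over candidate ``localized'' unit vectors, and for each net point $x$ shows that the event $\|Q_{W^C}P_{W^C}\hat A x\|\le C\delta\log n/\sqrt n$ has probability at most $(C_p\delta\log^{3/2}n)^{(1-\varepsilon)n}$ via a Littlewood--Offord bound applied to the \emph{unnormalized} matrix (after the approximation $\hat A\approx A/(np)$ on vectors nearly orthogonal to $\vec 1$). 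A union bound over net points and over all subsets $W$ finishes the argument. The key point is that by projecting onto rows in $W^C$ and using only columns in $W$, the relevant Bernoulli sums are genuinely independent; the degree normalization is removed \emph{before} any anti-concentration is applied.

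\textbf{Gaps in your proposal.} Both of your suggested routes run into the dependency issue that the paper is designed to avoid.
\begin{itemize}
\item In the Berry--Esseen step you invoke the $\ell_\infty$-delocalization of Erd\H{o}s et~al.\ for $\tilde v^{(i)}$, the second eigenvector of $\mathcal L_{G^{(i)}}$. That result is for the \emph{unnormalized} adjacency matrix; the paper explicitly notes that the analogous $\ell_\infty$ bound is not known for $\hat A$, and this is precisely why its net argument has to be more elaborate in the normalized case. Without this input the Berry--Esseen bound does not get off the ground.
\item For the Schur-complement route, the vector $a$ (the $i$-th off-diagonal column of $\hat A$) has entries $A_{ij}/\sqrt{d_i d_j}$, and the minor $B$ has entries $A_{jk}/\sqrt{d_j d_k}$. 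Since each $d_j$ contains $A_{ij}$, $a$ is \emph{not} independent of $B$, so a direct Hanson--Wright bound on $a^\top(\mu-B)^{-2}a$ is not justified. You also invoke ``eigenvalue and isotropic eigenvector rigidity for $\mathcal L_{G(n,p)}$'' without a reference; to my knowledge these are not available at the precision you need (control at the spectral edge, where $\lambda_2$ sits and where the gap $\lambda_2-\lambda_3$ may be much smaller than $n^{-1/2}$). Note that success here would in fact give the far stronger statement that \emph{every} coordinate satisfies $|v_2(i)|\ge 1/(\sqrt n\,\mathrm{polylog}\,n)$, which is the Rudelson--Vershynin-type result the paper cites as open for matrices with dependent entries.
\end{itemize}

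In short, the heuristic is sound, but the hard part---decoupling the normalization---is exactly what you have not done, and it is what the paper's net-and-union-bound argument is engineered to sidestep.
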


\begin{remark}
Our proof of this result also shows that the same conclusion holds for any eigenvector $v$ of $\nL_{G\left(n,p\right)}$ 
for which the corresponding eigenvalue $\lambda$ satisfies the inequality 
$\left| 1 - \lambda \right| \geq \left( 1 - \lambda_2 \left( \nL_{G\left(n,p\right)} \right) \right) /  \log \left( n \right)$. 
We omit the details. 
\end{remark}

\begin{remark}
The result above also holds for the normalized adjacency matrix
$D^{-1/2} A D^{-1/2}$ of $G\left( n, p \right)$,
since the eigenvectors of the normalized Laplacian and those of the normalized adjacency matrix are the same.
\end{remark}

\begin{remark}
Theorem~\ref{thm:deloc} does not follow from the delocalization results in the $\ell_q$
sense, since these do not rule out the possibility that the mass of the vector
is concentrated on a sublinear number of coordinates.
\end{remark}

Before proving Theorem~\ref{thm:deloc}, 
as a warm-up we first prove an analogous result for the
(unnormalized) adjacency matrix $A_G$ of $G=G(n,p)$,
for all eigenvectors $v_j(A_G)$ with $j > 1$.
This proof contains most of the main ideas of the proof for the normalized case,
but is somewhat simpler.

Shortly before the writing of this manuscript was completed, we discovered that Rudelson and Vershynin independently proved a delocalization result of the same type for matrices with independent entries~\cite{RVwip}.
Their results are stronger, in the sense that they apply to a $(1-o(1))$-fraction of the entries; however they cannot be applied to Chung's conjecture because the normalization of the Laplacian introduces dependencies between the entries.
On a related note, Nguyen, Tao and Vu very recently proved a related result on non-degeneration of eigenvectors for a certain class of matrices with independent entries~\cite{NTV15}.
\subsection{Approach}

\subsubsection{Decreasing spectral gap}

We first obtain a general sufficient condition under which the addition of an edge causes the spectral gap to decrease.
Given a graph $G$, and another graph $G^+$ obtained from $G$ by adding a single edge, consider the second eigenvector $f_2$ of the normalized Laplacian $\nL_G$.
If $f_2$ has a smaller Rayleigh quotient in $G^+$ than in $G$, then the spectral gap decreases.
This event can be explicitly expressed using $f_2$, $\lambda_2 \left( \nL_G \right)$, and the degrees of vertices in $G$, giving an explicit sufficient condition for the spectral gap to decrease in general graphs. See Lemma~\ref{lem:adding_edge} for details.

Next, we specialize this general condition to Erd\H{o}s-R\'enyi random graphs.
Simple calculations reveal that a sufficient condition for the spectral gap to decrease with constant probability is to have a constant fraction of entries of $f_2$ have the same order of magnitude.
Thus a delocalization result of the type previously described would complete the proof.

\subsubsection{Delocalization}

For our definition of delocalization, it suffices to show that a vector
with too many small entries cannot be an eigenvector of the symmetric normalized adjacency matrix $\wh{A} = \wh{A}_{G(n,p)}$
for a typical instance of $G(n,p)$.
To this end, suppose that a vector $v$ with $\Vert v \Vert_2=1$
has many of its coordinates smaller in absolute value than $\delta \ll n^{-1/2}$,
and that $v$ is also an eigenvector of $\wh{A}$.
For a typical instance of $G(n,p)$, it is known that the second largest eigenvalue
of $\wh A$ is $\Theta(n^{-1/2})$.
Intuitively, each entry of $\wh A v$ is close to a
sum of Bernoulli random variables scaled by the entries of $v$. If $v$ is an
eigenvector, then for all of its small entries, the Bernoulli sum must land in
an interval of size $\delta \times \Theta(n^{-1/2})$ and for this event to occur
simultaneously for all of the many small entries is very unlikely.
To formalize this fact we use a standard Littlewood-Offord-type estimate,
together with a result of Rudelson and Vershynin.

By taking a suitable enumeration of discrete approximations for the eigenvector
(very roughly speaking, an $\epsilon$-net in the subset of ``localized'' eigenvectors),
one can make sure that the above holds with a probability small enough
so that a union bound can be used to show that none of these approximations are in fact likely
on a typical instance of the graph.

\subsection{Further notation}

Let $\mathds{1}$ denote the $n$-dimensional vector with all entries equal to $1$ and
let $\vec{1}$ denote the $n$-dimensional unit vector whose every coordinate is equal to $1/\sqrt{n}$.
The dimension $n$ will be implicit in the context of appearance, and will not be explicitly noted.
For $S\subseteq \left[n \right]$, $\vec{1}_S$ denotes the $\left|S\right|$-dimensional unit vector whose every coordinate is equal to $1/\sqrt{\left|S\right|}$ (and whose coordinates are identified with $S$). For a vector $v \in \R^n$, by $\left\| v \right\| \equiv \left\| v \right\|_2$ we denote its standard Euclidean norm. For a vector $v \in \R^n$, and a real number $r \in \R$, we denote by $B\left( v, r \right)$ the Euclidean ball of radius $r$ around $v$.

For a graph $G = \left( V, E \right)$ and a vertex $v \in V$, let $d_v$ denote the degree of $v$ if the graph $G$ is clear from context.
Recall our notation for various matrices associated with $G$ at the beginning of Section~\ref{sec:chung};
for all such matrices, we omit the subscript when the graph $G$ is clear from context.

We denote the eigenvalues of an $n \times n$ symmetric matrix $M$ by $\left\{ \lambda_i \left( M \right) \right\}_{i = 1}^{n}$.
For Laplacian matrices we order the eigenvalues from smallest to largest (as in Section~\ref{sec:chung}),
but for adjacency matrices it is more natural to order the eigenvalues from largest to smallest:
$\lambda_1 \left( M \right) \geq \lambda_2 \left( M \right) \geq \dots \geq \lambda_n \left( M \right)$.
In the rest of the paper we therefore follow this convention.
Let $\evone^G$ denote the normalized eigenvector corresponding to $\lambda_1( \wh
A_G)$ and $\lambda_1(\nL_G)$, and recall that $\evone^G \propto D_G^{1/2}
\vec{1}$.

A sequence of events $\left\{ E_n \right\}$ is said to hold \emph{asymptotically almost surely} if $\lim_{n \to \infty} \P(E_n) = 1$,
and is said to hold \emph{with high probability} if for every constant $c>0$ there exists $n_c>0$ such that for all $n>n_c$ one has $\P(E_n) < n^{-c}$.

\section{Preliminaries: typical instances of $G(n,p)$}\label{sec:typical}
In our proof we use several properties of a ``typical'' instance of $G(n,p)$.
We first list these properties, and then show that each
one holds with high probability over $G(n,p)$.

\begin{definition} \label{def:typical}
    We say that a graph $G = (V,E)$is a \emph{typical} instance of $G(n,p)$, denoted by $G \in \Typ$, if the following properties hold:
    \begin{enumerate}
	\item \label{pr:degrees}
	    The degrees of all vertices are close to their expectation, in the following specific sense:
	    \[
		\forall v \in V, \ np - \log( n ) \cdot \sqrt{np} \le d_v \le np +
		\log( n ) \cdot \sqrt{np},
	    \]
	    and furthermore the sum of all degrees is also close to its expectation in the following specific sense:
	    \[
		n^2 p - n \log \left( n \right) \leq \sum_{v \in V} d_v \leq n^2 p + n \log \left( n \right).
	    \]
	\item \label{pr:eval}
	    The eigenvalues of the normalized and unnormalized adjacency
	    matrices are not far from their expectations, in the following specific sense:
	    \begin{align*}
		\ np - \log( n ) \cdot \sqrt{n} \le \lambda_1(A) &\le np +
		\log( n ) \cdot \sqrt{n}, \\
		\left| \lambda_i(A) \right| &\leq 3\sqrt{np(1-p)} \qquad \text{ for } \ \  2 \le i \le n,
	    \end{align*}
		and
	    \[
		\lambda_1(\wh A) = 1, \qquad
		\left| \lambda_i(\wh A) \right| \leq \frac{8}{\sqrt{np}} \qquad \text{ for } \ \  2 \le i \le n.
	    \]
	\item \label{pr:discrepancy}
	    For every subset of vertices $S \subseteq V$, we have
	    \[
	    \Bigl \vert |E(S)| - p \left (|S| \atop 2 \right ) \Bigr \vert \leq n^{3/2}
	    \]
	    where $E(S)$ denotes the edges whose endpoints both lie in $S$.

    \end{enumerate}
\end{definition}
We denote by $\PT$ the distribution of an Erd\H{o}s-R\'enyi random graph $G(n,p)$ conditioned on it being typical, i.e.,
\[
  \PT \left( \cdot \right) := \P \left( \cdot \, \middle| \, G \in \Typ \right).
\]
The following result is well known.
\begin{theorem}\label{thm:typical}
    For every fixed $p \in \left ( 0, 1 \right )$ we have $\P(G(n,p) \in \Typ) \to 1$ as $n \to \infty$.
\end{theorem}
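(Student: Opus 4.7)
The plan is to establish each of the three properties in Definition~\ref{def:typical} with probability $1 - o(1)$ and then take a union bound. All three are classical concentration statements about $G(n,p)$ at constant $p$, so I will indicate the main tool for each rather than carrying out the routine Chernoff or perturbation estimates. For Property~\ref{pr:degrees}, each $d_v$ is $\mathrm{Binomial}(n-1,p)$, so a standard Chernoff bound gives
\[
\P\bigl(|d_v - (n-1)p| > \log(n)\sqrt{np}\bigr) \leq 2\exp(-c\log^2 n) = o(1/n),
\]
and a union bound over the $n$ vertices handles the per-vertex statement. The sum $\sum_v d_v = 2|E|$ is twice a $\mathrm{Binomial}(\binom{n}{2}, p)$ variable, so its concentration in a window of width $n\log n$ around its mean is another direct Chernoff bound.

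For Property~\ref{pr:eval}, I would decompose $A = p(J - I) + M$, where $J$ is the all-ones matrix and $M$ is a symmetric mean-zero matrix with entries in $[-1,1]$. The rank-one contribution $p(J - I)$ has eigenvalue $(n-1)p$ along $\vec 1$ and $-p$ on $\vec 1^{\perp}$. The key input is the classical spectral-norm bound $\|M\|_{\mathrm{op}} \leq 3\sqrt{np(1-p)}$ with probability $1-o(1)$, due to F\"uredi-Koml\'os (and sharpened in the $G(n,p)$ setting by Vu and by Feige-Ofek). Combining this with Weyl's inequality yields the stated bounds on $\lambda_1(A)$ and $|\lambda_i(A)|$ for $i \geq 2$. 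For the normalized matrix I would use Property~\ref{pr:degrees} to write $D^{-1/2} = (np)^{-1/2}(I + E)$ with $\|E\|_{\mathrm{op}} = O(\log(n)/\sqrt{np})$ and derive $\wh A = (np)^{-1} A + (\text{small})$, from which $|\lambda_i(\wh A)| \leq 8/\sqrt{np}$ for $i \geq 2$ follows by Weyl; the identity $\lambda_1(\wh A) = 1$ is exact, witnessed by the eigenvector $D^{1/2}\mathds{1}$. Property~\ref{pr:discrepancy} is then a deterministic consequence of the previous two via the expander mixing lemma: expanding $\mathds{1}_S^{\top} A\, \mathds{1}_S = 2\,e(S)$ in the eigenbasis of $A$, the top term contributes $2p\binom{|S|}{2} + O(n)$ because $v_1 \approx \vec 1$ under the degree concentration, while the remaining terms are bounded by $\max_{i\geq 2}|\lambda_i(A)|\cdot\|\mathds{1}_S\|^2 = O(\sqrt{np})\cdot n = O(n^{3/2})$, which absorbs into the stated bound (if needed, the slight slack in the stated constant $1$ in front of $n^{3/2}$ can alternatively be handled by a direct Chernoff-plus-union-bound over the $2^n$ subsets).

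The main obstacle is the spectral-norm bound on the centered part $M$; this is not a direct Chernoff estimate and requires either a trace/moment-method argument (F\"uredi-Koml\'os) or a matrix-concentration/Talagrand-type argument. Every other step reduces to Chernoff bounds, Weyl's inequality, and the expander mixing lemma, so this is the one ingredient that must be imported from the random matrix literature.
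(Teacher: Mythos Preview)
Your proposal is correct and follows essentially the same plan as the paper's own proof: Chernoff plus union bound for Property~\ref{pr:degrees}, the F\"uredi--Koml\'os spectral-norm bound for the centered adjacency matrix together with Weyl for Property~\ref{pr:eval}, and a Chernoff/Hoeffding bound summed over the $2^n$ subsets for Property~\ref{pr:discrepancy}. The paper's proof is terser --- it simply cites \cite{NonNormalizedSpectra} for the unnormalized spectral bound and \cite[Theorem~3.6]{chung2004eigenvalues} for the normalized one, and says Property~\ref{pr:discrepancy} ``follows from a simple union bound'' --- so you are filling in the same outline with more detail.

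Two small differences are worth noting. First, for the normalized eigenvalue bound the paper invokes the Chung--Lu--Vu result directly, whereas you derive it from the unnormalized bound via $D^{-1/2} = (np)^{-1/2}(I+E)$; both are fine, and your route is more self-contained. Second, your primary argument for Property~\ref{pr:discrepancy} via the expander mixing lemma is a genuinely different (and elegant) route --- it makes the edge-count discrepancy a \emph{deterministic} consequence of Properties~\ref{pr:degrees} and~\ref{pr:eval} --- but as you correctly flag, it only yields a bound of the form $C\,n^{3/2}$ with $C>1$ (the contribution $\max_{i\ge 2}|\lambda_i(A)|\cdot|S|$ can be as large as $\tfrac{3}{2}n^{3/2}$, and the top-eigenvector error term picks up an extra $\log n$), so for the statement exactly as written one does need the direct Hoeffding-plus-$2^n$-union-bound that both you (as fallback) and the paper use.
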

\begin{proof}
    The first property follows from a simple Chernoff bound and a union bound.
    The second property for the unnormalized adjacency matrix is proven in
    \cite{NonNormalizedSpectra} and for the normalized adjacency matrix it follows from ~\cite[Theorem 3.6]{chung2004eigenvalues}.
    The third property is standard and follows from a simple union bound.
\end{proof}

\begin{theorem} \label{thm:typical-props}
    Let $n\geq 10$ be an integer and let $p \in \left (\tfrac{\log \left( n \right)}{\sqrt{n}} ,1 \right )$. If $G \in \Typ$ then it has the following
    properties.
    \begin{enumerate}
	\item \label{pr:evec_diff}
	    The top eigenvectors of $A$ and $\wh A$ are close to $\vec{1}$,
	    in the sense that
	    \[
		\Vert v_1(A) - \vec{1} \Vert_2 \le 2 \frac{\log \left( n \right)}{\sqrt{np}}
		\qquad \text{and}\qquad
		\Vert v_1(\wh A) - \vec{1} \Vert_2 \le \frac{2}{p} \frac{\log \left( n \right)}{\sqrt{n}}
	    \]
	    for all $n$ large enough.
	\item \label{pr:mat-proj}
	    If $S\subseteq [n]$, $P_S$ is the coordinate projection onto
	    $S$, and $Q_S$ is the projection onto the space orthogonal to
	    $\vec{1}_S$, then
	    \[
		\Vert Q_S P_S A \Vert_2 \le 2 \sqrt{\frac n p} \log \left( n \right)
		\qquad \text{and}\qquad
		\Vert Q_S P_S \wh A \Vert_2 \le \frac{2}{p} \frac{\log \left( n \right) }{\sqrt{n}}
	    \]
	    for all $n$ large enough.
	\item \label{pr:subtracting-mean}
	    The symmetric normalized adjacency matrix $\wh A$ is closely
	    approximated by $\tfrac{1}{np} A$ on vectors far from $\vec{1}$,
	    in the following specific sense.
	    For any subset $S \subseteq \left[ n \right]$ and any $x \in \R^n$ such that $\vert\langle x,\vec{1}\rangle\vert\le
	    \alpha$, we have
	    \[
	\Vert Q_S P_{S} \wh A x - \tfrac{1}{np} Q_S P_{S} A x\Vert_2
	\le p^{-5/2} \frac{\left( \log \left( n \right) \right)^2 + \alpha \sqrt{n} \log \left( n \right)}{n}.
	    \]
	\item \label{pr:ev2-bound}
	    The second eigenvalue of the symmetric normalized adjacency matrix $\wh A$ is not too small, specifically
	    \[
		\lambda_2(\wh A) \ge \left( 1 - o \left( 1 \right) \right) \frac{1-p}{16\sqrt{np}}.
	    \]
	\item \label{pr:large-entries}
	    Let $v$ be an eigenvector of $\wh A$
	    corresponding to the eigenvalue $\lambda$. Define the set
	    \[
		S := \left\{ i \in \left[ n \right] \, \middle| \, \left|v \left( i \right) \right| < \alpha \right\}.
	    \]
	    We then have
	    \[
		\Vert P_S v \Vert_2 \geq \frac{1}{3} \left( \frac{|\lambda|}{\lambda_2} - \frac{\log (n)}{\alpha^4
	    \lambda_2 np}\right) .
	\]
	In particular, if $\lambda = \Theta(\lambda_2)$, $\lambda = \Omega (n^{-1/2})$, and $\alpha \ge
	\frac{\log (n)}{(np)^{1/8}}$, then $\Vert P_S v \Vert_2 = \Omega(1)$.
    \end{enumerate}
\end{theorem}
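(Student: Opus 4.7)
I prove the five items in order; items~\ref{pr:evec_diff}--\ref{pr:ev2-bound} use standard spectral perturbation and trace arguments, while item~\ref{pr:large-entries} (the main obstacle) needs a delicate small-support analysis. For item~\ref{pr:evec_diff}, the key observation is $A\vec 1 = D\vec 1$, so that $\|A\vec 1 - np\vec 1\|_2 = n^{-1/2}\|(D - npI)\mathds{1}\|_2 \le \sqrt{np}\log(n)$ by item~\ref{pr:degrees} of Definition~\ref{def:typical}. Writing $\vec 1 = \alpha v_1(A) + w$ with $w \perp v_1(A)$ and using the spectral gap $\lambda_1(A) - \lambda_2(A) = \Theta(np)$ from item~\ref{pr:eval} then gives $\|w\|_2 = O(\log(n)/\sqrt{np})$, hence the bound on $\|v_1(A) - \vec 1\|_2$ up to sign. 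For $\wh A$, use the explicit form $v_1(\wh A) = D^{1/2}\mathds{1}/\|D^{1/2}\mathds{1}\|_2$ and compute via degree concentration. Item~\ref{pr:mat-proj} is then immediate: write $A = \lambda_1(A)v_1 v_1^T + R$ with $\|R\|_2 \le 3\sqrt{np(1-p)}$, note that $Q_S P_S \vec 1 = 0$ so $\|Q_S P_S v_1\|_2 \le \|v_1 - \vec 1\|_2$, and conclude $\|Q_S P_S A\|_2 \le O(\sqrt{np}\log n) \le 2\sqrt{n/p}\log n$; the bound for $\wh A$ is analogous using $\lambda_1(\wh A) = 1$.

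For item~\ref{pr:subtracting-mean}, write $D^{-1/2} = (np)^{-1/2}(I + E)$ with $E$ diagonal and $\|E\|_2 = O(\log(n)/\sqrt{np})$, obtained by Taylor expanding $(1 + \Delta_i)^{-1/2}$ at $\Delta_i = (d_i - np)/(np)$. This yields the identity
\[\wh A - \tfrac{1}{np}A = \tfrac{1}{np}\left(EA + AE + EAE\right).\]
For $x$ with $|\langle x, \vec 1\rangle| \le \alpha$, the estimate $\|Ax\|_2 \le \lambda_1 |\langle v_1, x\rangle| + 3\sqrt{np}\|x\|_2 = O(\alpha np + \sqrt{np}\log n)$ (using $|\langle v_1, x\rangle| \le \alpha + \|v_1 - \vec 1\|_2 \|x\|_2$), combined with the operator-norm bound on $\|Q_S P_S A\|_2$ from item~\ref{pr:mat-proj}, controls each of the three error terms after multiplication by $(np)^{-1}$, yielding the claimed bound after routine bookkeeping.

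For item~\ref{pr:ev2-bound}, I use a trace argument. Degree concentration gives
\[\mathrm{tr}(\wh A^2) = \sum_{\{i,j\}\in E}\frac{2}{d_i d_j} = \frac{2|E|}{(np)^2}(1\pm o(1)) = \frac{1}{p}(1\pm o(1)),\]
so $\sum_{i\ge 2}\lambda_i(\wh A)^2 \ge (1-p)/p - o(1)$. Set $\alpha := 8/\sqrt{np}$; by item~\ref{pr:eval}, $\lambda_i(\wh A) + \alpha \ge 0$ for every $i \ge 2$, so $\lambda_2(\wh A) + \alpha$ is the largest eigenvalue of $\wh A + \alpha I$ restricted to $v_1^\perp$. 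Averaging gives
\[(\lambda_2(\wh A) + \alpha)^2 \ge \frac{1}{n-1}\sum_{i\ge 2}(\lambda_i(\wh A) + \alpha)^2 \ge \alpha^2 + \frac{(1-p)/p - 2\alpha - o(1)}{n-1},\]
and applying the identity $\sqrt{\alpha^2 + y} - \alpha = y/(\sqrt{\alpha^2+y}+\alpha)$ with the chosen $\alpha$ yields $\lambda_2(\wh A) \ge (1-p)(1-o(1))/(16\sqrt{np})$.

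For item~\ref{pr:large-entries}, the main obstacle, let $T := [n]\setminus S$, so that $|T| \le 1/\alpha^2$. Decompose $v = v_S + v_T$ and $\wh A = v_1 v_1^T + B$ with $\|B\|_2 = \lambda_2$. Using $\wh A v = \lambda v$ together with $v \perp v_1$ and a Pythagorean decomposition in the $v_1$ direction, one derives the identity
\[\|Bv_T\|_2^2 - \|Bv_S\|_2^2 = \lambda^2\left(\|v_T\|_2^2 - \|v_S\|_2^2\right).\]
The naive bound $\|Bv_T\|_2 \le \lambda_2\|v_T\|_2$ is insufficient (it degenerates to a tautology at $|\lambda| = \lambda_2$); the crux is to improve it by exploiting that $v_T$ is supported on the small set $T$. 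Using item~\ref{pr:discrepancy} of Definition~\ref{def:typical} to control $|E(T)|$, and hence $\|\wh A|_T\|_F$, one obtains a refined estimate on $\|Bv_T\|_2$ whose improvement over the naive bound is of order $\log(n)/(\alpha^4 \lambda_2 np)$; combined with the identity this yields the claimed lower bound on $\|v_S\|_2$. I expect this refined bound on the action of $\wh A$ on small-support vectors to be the main technical difficulty of the theorem.
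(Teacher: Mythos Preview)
Your proofs of items~\ref{pr:evec_diff}--\ref{pr:ev2-bound} are essentially the same as the paper's, with only cosmetic differences: for item~\ref{pr:evec_diff} the paper cites Mitra for $v_1(A)$ while you give a self-contained Davis--Kahan-style argument, and for item~\ref{pr:ev2-bound} you reach the same trace inequality via a shift-and-average trick rather than the paper's split into positive and negative eigenvalues. Both are fine.

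Your plan for item~\ref{pr:large-entries}, however, has a real gap, and you have misjudged its difficulty---in the paper it is the \emph{shortest} of the five proofs. Your identity $\|Bv_T\|_2^2 - \|Bv_S\|_2^2 = \lambda^2(\|v_T\|_2^2 - \|v_S\|_2^2)$ is correct, but to extract a lower bound on $\|v_S\|_2$ from it at $|\lambda|=\lambda_2$ you would need $\|Bv_T\|_2$ to be \emph{strictly} less than $\lambda_2\|v_T\|_2$, and there is no reason for this: if $\|v_S\|_2$ were tiny then $v_T\approx v$ and $\|Bv_T\|_2 \approx |\lambda|\,\|v_T\|_2 = \lambda_2\|v_T\|_2$ exactly. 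Moreover, item~\ref{pr:discrepancy} of Definition~\ref{def:typical} is useless here: its additive error $n^{3/2}$ swamps any information about $E(T)$ for a set of size $|T|\le \alpha^{-2}=o(n^{1/4})$, and in any case $\|Bv_T\|_2$ is not controlled by $\wh A|_T$ alone, since $Bv_T$ has components on all of $[n]$.

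The paper's argument avoids $\|Bv_T\|_2$ entirely and works with the quadratic form. With $B=\wh A - v_1v_1^T$ one has $|\lambda| = |\langle v, Bv\rangle| \le |\langle v_S,Bv_S\rangle| + 2|\langle v_S,Bv_T\rangle| + |\langle v_T,Bv_T\rangle|$. The first two terms are at most $3\lambda_2\|v_S\|_2$ by Cauchy--Schwarz. For the last term one uses only the pointwise entry bound $|B_{ij}|\le \log(n)/(np)$ (from the degree concentration in item~\ref{pr:degrees}, no discrepancy needed) together with $|T|\le \alpha^{-2}$ and $|v(i)|\le 1$ to get
\[
|\langle v_T,Bv_T\rangle| \;\le\; |T|^2 \max_{i,j}|B_{ij}| \;\le\; \frac{\log(n)}{\alpha^4 np}.
\]
Rearranging $|\lambda|\le 3\lambda_2\|v_S\|_2 + \log(n)/(\alpha^4 np)$ gives the claim in one line. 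The point you were missing is that the ``refined small-support estimate'' is needed only for the \emph{scalar} $\langle v_T,Bv_T\rangle$, which depends only on the $T\times T$ block of $B$, not for the vector $Bv_T$.
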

The proof of \prettyref{thm:typical-props} is given by a series of lemmas in
\prettyref{app:typical}.

For a constant $C$, let $\cA_n^C$ denote the family of graphs $G$ on $n$ vertices having the following property:
All of the eigenvectors $v$ of the adjacency matrix $A_G$ satisfy
$$
\frac{ \Vert v \Vert_{\infty} }{ \Vert v \Vert_2} \leq  \left( \log \left( n \right) \right)^C / \sqrt{n}.
$$
The next result will be useful to us in proving a delocalization result for the eigenvectors of the unnormalized adjacency matrix in Section~\ref{sec:adj_union_bound}.

\begin{theorem}\label{thm:infty_bound}[\cite{erdHos2013spectral},~Theorem~2.16]
There exists a finite constant $C_{\infty}$ such that 
for any fixed $p \in (0,1)$ we have 
$\P\left( G\left(n,p\right) \in \cA_n^{C_{\infty}} \right) \to 1$ 
as $n \to \infty$. 
\end{theorem}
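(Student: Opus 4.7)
The plan is to deduce this $\ell_\infty$ delocalization bound from the well-developed local law machinery for Wigner-type matrices, since for fixed $p \in (0,1)$ the centered matrix $H := A - p(\mathds{1}\mathds{1}^T - I)$ has independent, mean-zero, bounded (hence subgaussian) entries above the diagonal with variance $p(1-p)$, which is a Wigner matrix up to rescaling.

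First I would peel off the outlier contribution. Writing $A = H + p\mathds{1}\mathds{1}^T - pI$, the rank-one perturbation $p\mathds{1}\mathds{1}^T$ only affects the top eigenpair, which by property~\ref{pr:evec_diff} of Theorem~\ref{thm:typical-props} is already delocalized (the top eigenvector of $A$ is close to $\vec{1}$, whose $\ell_\infty/\ell_2$ ratio is exactly $1/\sqrt{n}$). A Davis--Kahan-type argument shows that all other eigenvectors of $A$ are very close to corresponding eigenvectors of $H$, so it suffices to prove $\ell_\infty$ delocalization for eigenvectors of $H/\sqrt{n}$.

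Next I would establish a local semicircle law for $H/\sqrt{n}$: for spectral parameters $z = E + i\eta$ with $\eta \geq (\log n)^{B}/n$ and $E$ in the bulk of the semicircle, the diagonal resolvent entries $G_{ii}(z) = ((H/\sqrt{n} - zI)^{-1})_{ii}$ satisfy
\[
\bigl| G_{ii}(z) - m_{\mathrm{sc}}(z) \bigr| \leq (\log n)^{C_1}
\]
uniformly in $i$ and $z$, with probability at least $1 - n^{-D}$ for any fixed $D$. This is by now standard for Wigner matrices with subgaussian entries via the Schur complement / self-consistent equation bootstrap of Erd\H{o}s--Schlein--Yau and Erd\H{o}s--Yau--Yin, and the Bernoulli entries of $G(n,p)$ fit squarely into that framework when $p$ is constant. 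Then, using the spectral decomposition identity
\[
\mathrm{Im}\, G_{ii}(E + i\eta) \;=\; \eta \sum_{k} \frac{\left| v_k(i) \right|^2}{(E - \mu_k)^2 + \eta^2},
\]
I would set $E = \mu_j$ for a target eigenvalue to obtain $\left| v_j(i) \right|^2 \leq \eta \cdot \mathrm{Im}\, G_{ii}(\mu_j + i\eta)$, and then combine with the upper bound from the local law and the choice $\eta = (\log n)^B / n$ to conclude $\left| v_j(i) \right|^2 \leq (\log n)^{C_1 + B}/n$.

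Finally I would union-bound the local law over $i \in [n]$ and over a polynomial-size grid of spectral parameters $z$, extending to all $z$ in between by Lipschitz continuity of the resolvent at scale $\eta$, and then union-bound over all $n$ eigenvectors. The main obstacle is the local law at the optimal scale $\eta \sim 1/n$ up to polylog factors: while this is essentially a black-box invocation for constant $p$, care must be taken at the spectral edges where Tracy--Widom-type estimates rather than bulk estimates are needed, and in the bookkeeping to confirm that the exponent $C_\infty$ can be taken independent of $p$ (or at least bounded for $p$ bounded away from $0$ and $1$).
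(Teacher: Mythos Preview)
The paper does not prove this statement at all: it is quoted verbatim as \cite[Theorem~2.16]{erdHos2013spectral} and used as a black box. Your sketch is therefore not competing with any argument in the paper, but is an outline of the proof in the cited reference. On that level you have correctly identified the engine: a local semicircle law for the (centered, rescaled) adjacency matrix, followed by the standard resolvent identity
\[
\mathrm{Im}\, G_{ii}(\lambda_j + i\eta) \;\geq\; \frac{|v_j(i)|^2}{\eta}
\]
at scale $\eta = (\log n)^{B}/n$. That part is fine and matches the approach in \cite{erdHos2013spectral}.

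There is, however, a real gap in your reduction step. You write that ``a Davis--Kahan-type argument shows that all other eigenvectors of $A$ are very close to corresponding eigenvectors of $H$.'' This is false as stated. The perturbation $p\,\mathds{1}\mathds{1}^{T}$ has operator norm $pn$, while in the bulk the eigenvalue gaps of $H$ are of order $n^{-1/2}$; Davis--Kahan therefore gives no control whatsoever on individual bulk eigenvectors, and indeed a bulk eigenvector $v_j(A)$ is generically a nontrivial combination of many nearby eigenvectors $u_k(H)$. Knowing that each $u_k$ is delocalized does not by itself bound $\|v_j(A)\|_{\infty}$, since the naive Cauchy--Schwarz over the mixing coefficients loses a factor of $\sqrt{n}$.

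The correct route---and the one actually taken in \cite{erdHos2013spectral}---is to avoid comparing eigenvectors altogether and instead control $\mathrm{Im}\,(G_A)_{ii}$ for the resolvent of $A$ \emph{itself}. One either proves the local law directly for $A$ (absorbing the deterministic rank-one shift into a modified self-consistent equation), or one transfers the local law from $H$ to $A$ at the resolvent level via the rank-one update (Sherman--Morrison) formula, which shows that $(G_A)_{ii}$ and $(G_H)_{ii}$ differ by a term that is negligible at the relevant scale. Once $\mathrm{Im}\,(G_A)_{ii}$ is bounded, your spectral-decomposition step applies verbatim with $G_A$ in place of $G_H$ and yields delocalization for the eigenvectors of $A$ directly.
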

We note that~\cite[Theorem~2.16]{erdHos2013spectral} is a stronger and more general result; however, the theorem above is sufficient for our purposes. 
The results of ~\cite[Theorem~2.16]{erdHos2013spectral} show that $C_{\infty}$ can be taken to be any constant greater than $4$.

\section{From the spectral gap to delocalization of the second eigenvector}
In this section we show how the proof of Theorem~\ref{thm:main} can be reduced to a question about the entries of the second eigenvector of the normalized Laplacian. 
We prove the following proposition, which gives a sufficient condition for the addition of an edge to decrease the spectral gap.

\begin{proposition} \label{propsec3}
Let $G=(V,E) \in \Typ$. Let $f:V \to \R$ denote the eigenvector of $\nL_G$ corresponding to the eigenvalue $\lambda_2 \left( \nL_G
 \right)$, normalized such that $\left\| f \right\|_2 = 1$.  Let $u,v \in V$ be
 two vertices that are not connected by an edge, i.e., $\left\{u,v\right\}
 \notin E$.  Denote by $G_+ = \left( V, E_{+} \right)$ the graph obtained from
 $G$ by adding an edge between $u$ and $v$, i.e., $E_{+} := E \cup \bigl\{
     \{ u,v\} \bigr \}$. Then
$$
\frac{1}{\sqrt{np}}  \left ( f(u)^2 + f(v)^2 \right ) + (np)^{-2} <  c f(u)f(v) ~ \Rightarrow ~ \lambda_2 \left( \nL_{G} \right) > \lambda_2 \left( \nL_{G_+} \right),
$$
where $c>0$ is a universal constant. 

In particular, for every fixed $p \in (0,1)$ there exists a constant $C_p$ such that for all $n>C_p$ the following holds: 
if $\frac{1}{n^{0.51}} \leq |f(u)|, |f(v)| \leq \frac{1}{n^{0.49}}$ and $f(u)f(v) > 0$, 
then $\lambda_2 \left( \nL_{G} \right) > \lambda_2 \left( \nL_{G_+} \right)$. 
\end{proposition}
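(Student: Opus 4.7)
The plan is to upper bound $\lambda_2(\nL_{G_+})$ by evaluating the generalized Rayleigh quotient at the minimizer for $\lambda_2(\nL_G)$, slightly rebalanced to respect the new orthogonality constraint. Recall the variational formula
\[
\lambda_2(\nL_H) = \min \left\{ \frac{g^\top L_H g}{g^\top D_H g} : g^\top D_H \vec{1} = 0,\ g \neq 0 \right\},
\]
obtained from Courant--Fischer via the substitution $h = D_H^{1/2} g$. For $H = G$ the minimizer is $g := D_G^{-1/2} f$, since $\|f\|_2 = 1$ and $f \perp D_G^{1/2}\vec{1}$.

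The test vector I would use for $G_+$ is a small modification of $g$. Since $D_{G_+} = D_G + e_u e_u^\top + e_v e_v^\top$, $g$ need not satisfy $g^\top D_{G_+} \vec{1} = 0$, so I would set $\tilde g := g - \alpha \vec{1}$ with $\alpha := (g(u)+g(v))/(\sum_i d_i + 2)$. Using $L_{G_+} \vec{1} = 0$ together with $L_{G_+} = L_G + (e_u - e_v)(e_u - e_v)^\top$, a short calculation gives
\[
\frac{\tilde g^\top L_{G_+} \tilde g}{\tilde g^\top D_{G_+} \tilde g}
= \frac{g^\top L_G g + (g(u)-g(v))^2}{g^\top D_G g + g(u)^2 + g(v)^2 - \alpha(g(u)+g(v))}.
\]
Substituting $g^\top L_G g = \lambda_2(\nL_G) \cdot g^\top D_G g$ and cross-multiplying, the desired inequality $\lambda_2(\nL_{G_+}) < \lambda_2(\nL_G)$ reduces to
\[
(1 - \lambda_2(\nL_G))\bigl(g(u)^2 + g(v)^2\bigr) + \lambda_2(\nL_G) \cdot \frac{(g(u)+g(v))^2}{\sum_i d_i + 2} < 2\, g(u)\, g(v). \qquad (\star)
\]

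Next I would translate $(\star)$ from $g$ back to $f$ using typicality. Property~\ref{pr:eval} gives $1 - \lambda_2(\nL_G) \le 8/\sqrt{np}$; Property~\ref{pr:degrees} gives $d_w = (1 \pm o(1)) np$ uniformly, so $g(w)^2 = \Theta(f(w)^2/(np))$ and $g(u) g(v) = (1 - o(1)) f(u) f(v)/(np)$; and $\sum_i d_i \ge \tfrac{1}{2} n^2 p$. Since $\|f\|_2 = 1$ one has $(g(u)+g(v))^2 \le 2(g(u)^2+g(v)^2) \le 8/(np)$, so the second term on the left of $(\star)$ is $O(1/(n^2 p)) = O((np)^{-2})$ when $p \le 1$. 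Collecting these estimates, $(\star)$ is implied by
\[
\frac{1}{\sqrt{np}} \bigl( f(u)^2 + f(v)^2 \bigr) + (np)^{-2} < c\, f(u)\, f(v)
\]
for a sufficiently small universal constant $c > 0$, which is the first claim. For the ``in particular'' statement, under $1/n^{0.51} \le |f(u)|, |f(v)| \le 1/n^{0.49}$ with $f(u) f(v) > 0$ one has $(f(u)^2+f(v)^2)/\sqrt{np} = O(n^{-1.48})$ and $(np)^{-2} = O(n^{-2})$ while $f(u) f(v) \ge n^{-1.02}$, so the sufficient condition holds for $n$ larger than some $C_p$.

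The main technical obstacle is the bookkeeping of the two rescalings---from $f$ to $g = D_G^{-1/2} f$ and from $g$ to $\tilde g = g - \alpha \vec{1}$---and verifying that the error terms they introduce are absorbed by the $(np)^{-2}$ slack in the hypothesis. The typicality properties in Definition~\ref{def:typical} are designed precisely to control both.
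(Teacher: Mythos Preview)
Your argument is correct and follows the same overall strategy as the paper: use the variational characterization of $\lambda_2$, plug in the second eigenvector of $G$ as a test function for $G_+$, correct for the changed orthogonality constraint, and bound the resulting terms via typicality. The difference is purely in coordinates. The paper works with $f$ directly and the normalized Laplacian $\nL_{G_+}$, which forces the somewhat tedious Dirichlet-form calculation relegated to Appendix~\ref{app:calc} (the normalization $D_+^{-1/2}$ touches every edge incident to $u$ or $v$). You instead pass to $g = D_G^{-1/2} f$ and the generalized quotient $g^\top L_H g / g^\top D_H g$, so that the edge addition becomes the rank-one update $L_{G_+} = L_G + (e_u - e_v)(e_u - e_v)^\top$ and $D_{G_+} = D_G + e_u e_u^\top + e_v e_v^\top$; this makes both numerator and denominator one-line computations. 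The orthogonality correction is likewise simpler, since $L_{G_+}\vec{1}=0$ kills the shift in the numerator. After translating back via $g(w)^2 \asymp f(w)^2/(np)$ and $g(u)g(v) \asymp f(u)f(v)/(np)$, your condition $(\star)$ and the paper's condition~\eqref{eq:test} are equivalent up to constants, and both reduce to the stated hypothesis with $c$ on the order of $1/32$. Your route buys a cleaner derivation at no loss of generality; the paper's route keeps everything in the $\nL$-picture at the cost of the appendix computation.
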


The main theorem of the paper follows easily from the above proposition together with the delocalization result described in Theorem~\ref{thm:deloc}. 

\medskip 

\begin{proof}\textbf{of Theorem~\ref{thm:main}.} 
Let $G \sim G(n,p)$. For a constant $c$ consider the event
$$
E_c := \left \{G \in \Typ \right \} \cap \left \{ \frac{1}{n} \# \left\{ i \in V : \left| f \left( i \right) \right| \in \left [ \frac{1}{n^{0.51}}, \frac{1}{n^{0.49}} \right ]  \right\} \geq c \right \},
$$
where $f : V \to \R$ is the eigenvector of $\nL_G$ corresponding to the eigenvalue $\lambda_2 \left( \nL_G \right)$, normalized such that $\left\| f \right\|_2 = 1$, just as in Proposition~\ref{propsec3}. 
According to Theorems~\ref{thm:deloc} and~\ref{thm:typical}, there exists $c>0$ depending only on $p$ such that $\P(E_c) \to 1$ as $n \to \infty$; 
in fact, by Theorem~\ref{thm:deloc} we can take $c = 1/2 - \eta$ for any constant $\eta > 0$. 
Define 
\begin{align*}
 J_+ &:= \left\{ i \in V : f \left( i \right) \in \left[ \frac{1}{n^{0.51}}, \frac{1}{n^{0.49}} \right] \right\} \\  
 \text{and} \qquad 
 J_- &:= \left\{ i \in V : - f \left( i \right) \in \left[ \frac{1}{n^{0.51}}, \frac{1}{n^{0.49}} \right] \right\}. 
\end{align*}
Whenever $E_c$ holds, we must have $\left| J_+ \right| + \left| J_- \right| \geq nc$. 
Let $F_+ := \binom{J_+}{2}$ be the set of possible edges between vertices in $J_+$, 
and define $F_- := \binom{J_-}{2}$ similarly. 
Since $G \in \Typ$, by Property~\ref{pr:discrepancy} in the definition of $\Typ$, we have 
\[
 \left| F_+ \setminus E \left( J_+ \right) \right| \geq \left( 1 - p \right) \binom{\left|J_+ \right|}{2} - n^{3/2}
\]
and
\[
 \left| F_- \setminus E \left( J_- \right) \right| \geq \left( 1 - p \right) \binom{\left|J_- \right|}{2} - n^{3/2}. 
\]
For the same reason, the number of edges in $G$ is at least $p \binom{n}{2} - n^{3/2}$, 
so the number of ``nonedges'' is at most $\left( 1 - p \right) \binom{n}{2} + n^{3/2}$. 
By Proposition~\ref{propsec3}, assuming that $n>C_p$, 
we have for every 
$(u,v) \in \left( F_+ \setminus E \left( J_+ \right) \right) \cup \left( F_- \setminus E \left( J_- \right) \right)$ 
that 
$\lambda_2 \left( \nL_{G} \right) > \lambda_2 \left( \nL_{G_+} \right)$, 
where $G_+$ is the graph obtained from $G$ by adding an edge between $u$ and $v$. 
Therefore 
\begin{align*}
 a_{-} \left( G \right) 
&\geq \frac{(1-p) \left[ \binom{\left| J_+ \right|}{2} + \binom{\left| J_- \right|}{2} \right] - 2 n^{3/2}}{\left( 1 - p \right) \binom{n}{2} + n^{3/2}} 
\geq \frac{(1-p)  \left[ \left( \frac{\left| J_+ \right| + \left| J_- \right|}{2}  \right)^2- \left( \left| J_+ \right| + \left| J_- \right| \right) \right] - 2 n^{3/2}}{\left( 1 - p \right) \binom{n}{2} + n^{3/2}} \\
&\geq \frac{(1-p)  \left[ \left( \frac{nc}{2}  \right)^2 - n \right] - 2 n^{3/2}}{\left( 1 - p \right) \binom{n}{2} + n^{3/2}} 
\geq \frac{c^2}{2} - o \left( 1 \right)
\end{align*}
as $n \to \infty$, which concludes the proof. 
\end{proof}

For the proof of Proposition~\ref{propsec3} we use the following lemma, which holds for any finite graph and follows from elementary computations. 

\begin{lemma}\label{lem:adding_edge}
 Let $G = \left(V,E\right)$ be a finite graph, and let $f$ denote the
 eigenvector of $\nL_G$ corresponding to the eigenvalue $\lambda_2 \left( \nL_G
 \right)$, normalized such that $\left\| f \right\|_2 = 1$.  Let $u,v \in V$ be
 two vertices that are not connected by an edge, i.e., $\left\{u,v\right\}
 \notin E$.  Denote by $G_+ = \left( V, E_{+} \right)$ the graph obtained from
 $G$ by adding an edge between $u$ and $v$, i.e., $E_{+} := E \cup \left\{
     \left\{ u,v\right\} \right\}$.  Define the quantity 
$p_f := \langle f, \evonep \rangle$, 
     the projection of $f$ onto the top eigenvector of $G_+$. If
\begin{equation}\label{eq:test}
\begin{split}
 p_f^2 \lambda_2 \left( \nL_G \right) + 2 \left( 1 - \lambda_2 \left( \nL_G \right) \right) \left\{ \frac{\sqrt{d_u + 1} - \sqrt{d_u}}{\sqrt{d_u + 1}} f\left( u \right)^2 + \frac{\sqrt{d_v + 1} - \sqrt{d_v}}{\sqrt{d_v + 1}} f\left( v \right)^2 \right\} \\
< \frac{2 f\left( u \right) f \left( v \right)}{\sqrt{d_u + 1}\sqrt{d_v + 1}},
\end{split}
\end{equation}
then
\[
\lambda_2 \left( \nL_{G} \right) > \lambda_2 \left( \nL_{G_+} \right),
\]
i.e., the spectral gap decreases by adding an edge between $u$ and $v$.
\end{lemma}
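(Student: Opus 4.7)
The plan is to apply the variational (Courant--Fischer) characterization of the second eigenvalue of $\nL_{G_+}$ to a carefully chosen test vector. Since the top eigenspace of $\nL_{G_+}$ is spanned by $\evonep$, we have
\[
\lambda_2(\nL_{G_+}) \;\le\; \frac{\langle g, \nL_{G_+} g\rangle}{\|g\|_2^2}
\qquad \text{for every } g \perp \evonep,\; g\neq 0,
\]
so it suffices to exhibit such a $g$ whose Rayleigh quotient in $G_+$ is strictly less than $\lambda_2(\nL_G)$. The natural candidate is the $G$-eigenvector $f$ with its component along $\evonep$ removed, namely $g := f - p_f \evonep$, where $p_f = \langle f, \evonep\rangle$. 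By construction $g \perp \evonep$, and $\|g\|_2^2 = 1 - p_f^2$.

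The next step is to reduce the Rayleigh quotient of $g$ in $\nL_{G_+} = I - \hA_{G_+}$ to a clean expression in $f$ alone. Writing $\langle g, \hA_{G_+} g\rangle$ in terms of $f$ and $\evonep$ and using the eigenvector relation $\hA_{G_+}\evonep = \evonep$, two cross terms collapse and one gets
\[
\langle g, \nL_{G_+} g\rangle \;=\; 1 - \langle f, \hA_{G_+} f\rangle.
\]
Combining with $\|g\|_2^2 = 1 - p_f^2$ and $\langle f, \hA_G f\rangle = 1 - \lambda_2(\nL_G)$ (because $f$ is the unit eigenvector of $\hA_G$ with eigenvalue $1-\lambda_2(\nL_G)$), the desired strict inequality $\langle g,\nL_{G_+} g\rangle/\|g\|_2^2 < \lambda_2(\nL_G)$ is equivalent to
\[
\bigl\langle f,\,(\hA_{G_+} - \hA_G)\,f\bigr\rangle \;>\; p_f^2\,\lambda_2(\nL_G).
\]

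It remains to compute the left-hand side explicitly and match it with the hypothesis~(\ref{eq:test}). Because $\{u,v\}\notin E$, the only entries of $\hA_{G_+} - \hA_G$ that are nonzero lie in rows and columns $u$ and $v$: the new edge contributes $1/\sqrt{(d_u+1)(d_v+1)}$ at positions $(u,v)$ and $(v,u)$, while an old edge $\{u,k\}\in E$ contributes the rescaling $\frac{1}{\sqrt{d_k}}\bigl(\frac{1}{\sqrt{d_u+1}} - \frac{1}{\sqrt{d_u}}\bigr)$ at $(u,k)$ and $(k,u)$, and symmetrically for $v$. Summing these contributions against $f\otimes f$, the edge-by-edge terms involve $f(u)\sum_{k\sim_G u} f(k)/\sqrt{d_k}$, which by the $u$-th coordinate of the eigenvector equation $\hA_G f = (1-\lambda_2(\nL_G))f$ equals $\sqrt{d_u}\,(1-\lambda_2(\nL_G))\,f(u)^2$, and similarly at $v$ (crucially using $v\notin \{k:k\sim_G u\}$, so no correction is needed). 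After using $\sqrt{d_u}\bigl(\tfrac{1}{\sqrt{d_u+1}}-\tfrac{1}{\sqrt{d_u}}\bigr) = -(\sqrt{d_u+1}-\sqrt{d_u})/\sqrt{d_u+1}$ and rearranging, the required inequality becomes precisely~(\ref{eq:test}).

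The main obstacle is purely the bookkeeping: one must track the fact that adding the single edge $\{u,v\}$ rescales every edge incident to $u$ or $v$ (because the normalization changes), and assemble these many rank-one perturbations into a single identity. Once the eigenvector equation for $f$ is used to fold the long sums $\sum_{k\sim_G u} f(k)/\sqrt{d_k}$ into a multiple of $f(u)^2$, no inequalities beyond elementary algebra are needed, so the argument reduces to a direct (if slightly tedious) computation.
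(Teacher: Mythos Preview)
Your proposal is correct and follows essentially the same route as the paper: both use the variational characterization with the test vector $f_\perp = f - p_f\,\evonep$, reduce (via $\nL_{G_+}\evonep = 0$) the Rayleigh quotient to $f^T\nL_{G_+}f/(1-p_f^2)$, and then invoke the eigenvector equation for $f$ to collapse the neighbor sums. The only cosmetic difference is that the paper organizes the bookkeeping through the Dirichlet form $\sum_{\{i,j\}\in E_+}(f(i)/\sqrt{d_i^+}-f(j)/\sqrt{d_j^+})^2$ (see Appendix~\ref{app:calc}), whereas you work directly with the entrywise perturbation $\hA_{G_+}-\hA_G$; the two computations are equivalent.
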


\begin{remark}
Note that this result holds even if $G$ is disconnected. In that case $\lambda_2 \left( \nL_G \right) = 0$, so the spectral gap clearly cannot decrease. This is not in contradiction with the lemma above; when $\lambda_2 \left( \nL_G \right) = 0$, the inequality~\eqref{eq:test} cannot hold.
\end{remark}

\medskip

\begin{proof}
 Let $D_+$ denote the diagonal matrix containing the degrees of the vertices in
 $G_+$ on the diagonal; the degrees are $d_i^+ = d_i$ for all $i \in V
 \setminus \left\{u,v\right\}$, and $d_u^+ = d_u + 1$ and $d_v^{+} = d_v + 1$.
 The first eigenvector of $\nL_G$ is $\evone := D^{1/2} \mathds{1}$,
 while the first eigenvector of $\nL_{G_+}$ is $\evonep := D_+^{1/2}
 \mathds{1}$.
By the variational characterization of eigenvalues, 
using also that 
the first eigenvalue of $\nL_{G_+}$, corresponding to the eigenvector $\evonep$, is
$0$, we have 
\begin{equation}\label{eq:lambda2_est}
    \lambda_2 \left( \nL_{G_+} \right) = \min_{\substack{x\\ \langle x,
	    \evonep\rangle =
    0}} \frac{x^T \nL_{G_+} x}{x^T x} \leq \frac{f_{\perp}^T \nL_{G_+} f_{\perp}}{f_\perp^T f_\perp} = \frac{f^T \nL_{G_+} f}{1 - p_f^2},
\end{equation}
where $f_\perp$ denotes the projection of $f$ onto the subspace orthogonal to
$\evonep$, and recall that $f$ is a unit vector.
A straightforward calculation---which requires a lot of bookkeeping; see~\prettyref{app:calc} for details---tells us that the expression for $f^T \nL_{G_+} f$ simplifies to the following:
\begin{equation}\label{eq:calc_final}
 \begin{split}
 f^T \nL_{G_+} f &= \lambda_2(\nL_G) + 2 \left( 1 - \lambda_2 \left( \nL_G \right) \right) \left\{ \frac{\sqrt{d_u + 1} - \sqrt{d_u}}{\sqrt{d_u + 1}} f\left( u \right)^2 + \frac{\sqrt{d_v + 1} - \sqrt{d_v}}{\sqrt{d_v + 1}} f\left( v \right)^2 \right\} \\
&\quad - \frac{2f(u)f(v)}{\sqrt{(d_u+1)(d_v+1)}}.
 \end{split}
\end{equation}
By~\eqref{eq:lambda2_est} we have that
\begin{equation}\label{eq:suff}
 \frac{f^T \nL_{G_+} f}{1 - p_f^2} < \lambda_2 \left( \nL_G \right)
\end{equation}
implies that $\lambda_2 \left( \nL_{G_+} \right) < \lambda_2 \left( \nL_G \right)$. By plugging in our expression for $f^T \nL_{G_+} f$ in~\eqref{eq:calc_final}, we get that~\eqref{eq:suff} is equivalent to~\eqref{eq:test}. 
\end{proof}

We are now ready to prove the main proposition of the section. 

\medskip 

\begin{proof}\textbf{of Proposition~\ref{propsec3}.} 
Using Lemma~\ref{lem:adding_edge}, we are interested in a sufficient condition for the inequality~\eqref{eq:test} to hold true. 
Since $G \in \Typ$, we have by definition that
$$
\forall v \in V, \ np - \log( n ) \cdot \sqrt{np} \le d_v \le np +
		\log( n ) \cdot \sqrt{np}. 
$$
Under the assumption that $n$ is large enough so that $\log( n ) \sqrt{np} < np/2$, we get that
\begin{equation} \label{eq:typcond1}
\forall v \in V, \ np/2 \le d_v \le 2np. 
\end{equation}
This gives us that
\begin{equation} \label{eq:degest1}
\frac{\sqrt{d_u + 1} - \sqrt{d_u}}{\sqrt{d_u + 1}} \leq \frac{1}{np}, ~ \frac{\sqrt{d_v + 1} - \sqrt{d_v}}{\sqrt{d_v + 1}} \leq \frac{1}{np} \mbox{ and } \frac{1}{\sqrt{d_u + 1}\sqrt{d_v + 1}} > \frac{1}{4np}.
\end{equation}
Since $f^T \evone = 0$, the length of the projection of $f$ onto $\evonep$ is
\begin{multline*}
   p_f := \left\langle f , \frac{1}{\sqrt{\sum_{i \in V}
	{d^+_i}}}D_+^{\tfrac{1}{2}}\mathds{1}\right\rangle \\
\begin{aligned}
&= \frac{1}{\sqrt{\sum_{i \in V} d^+_i}}
    \left(\left( \sum_{i \in V} f(i) \sqrt{d_i} \right)
    - f(u)\sqrt{d_u} - f(v) \sqrt{d_v} + f(u)\sqrt{d_u+1} +
f(v)\sqrt{d_v+1}\right)\\
&= \frac{1}{\sqrt{2+ \sum_{i \in V} d_i}}
\left\{ f(u)(\sqrt{d_u + 1} - \sqrt{d_u}) + f(v)(\sqrt{d_v + 1}
- \sqrt{d_v}) \right\}, 
\end{aligned}
\end{multline*}
which, together with equation~\eqref{eq:typcond1} gives that 
$$
|p_f| \leq (np)^{-3/2} \left ( \left| f(u) \right| + \left| f(v) \right| \right ) \leq 2 (np)^{-3/2}.
$$
The above equation, together with~\eqref{eq:degest1}, plugged into~\eqref{eq:test} gives that a sufficient condition for~\eqref{eq:test} to hold is that 
\begin{equation}\label{eq:a_suff_cond}
8 \left( np \right)^{-2} + 4 \left( 1 - \lambda_2 \left( \nL_G \right) \right) \left ( f(u)^2 + f(v)^2 \right ) < f(u)f(v).
\end{equation}
Since $G \in \Typ$, by definition we also have that
$$
\left | 1 - \lambda_2 \left( \nL_G \right) \right |  = \left| \lambda_2(\wh A) \right| \leq \frac{8}{\sqrt{p n}}, 
$$
and so~\eqref{eq:a_suff_cond} is implied by the inequality
\[
8 \left( np \right)^{-2} + 32 \left( np \right)^{-1/2} \left ( f(u)^2 + f(v)^2 \right ) < f(u)f(v), 
\]
which concludes the proof. 
\end{proof}

\section{Delocalization of the second eigenvector}

In this section we prove our delocalization result stated in
Theorem~\ref{thm:deloc}. As a warm-up, we first prove an analogous result for
the adjacency matrix $A_G$ of $G=G(n,p)$ which contains most of the main
ideas of the proof for the normalized case, but is somewhat simpler. We then
present the proof for the normalized Laplacian, which carries with it some
extra difficulties.

Before we move on to these proofs, we need to collect a few auxiliary results related to small ball concentration bounds for sums of independent random variables. 
We present these in the next subsection.

\subsection{Small ball concentration estimates}\label{sec:useful}
Consider a vector whose entries are independent sums of independent scaled Bernoulli random variables. 
Our proof hinges on showing an upper bound for the probability that such a vector has small norm. 
To do this, we rely on a previous Littlewood-Offord-type result and also on a theorem of Rudelson and Vershynin. 

The following definition is natural for our purposes. 
\begin{definition}
For a real random vector $Z \in \R^n$ and $t \ge 0$, define the concentration function
\[
	\conc(Z,t) := \max_{q \in \R^n} \P \left[ \Vert Z - q \Vert_2 \le t \right].
\]
\end{definition}
This function measures the largest probability that a random vector lands in a ball of fixed radius. 

For a single entry of a vector, we use the following lemma to bound the 
concentration function.
\begin{lemma}\label{lem:interval_prob}
	Let $X = \sum_{i \in [n]} a_i \beta_i$, where the $\beta_i \sim \Ber(p)$ are
	independent Bernoulli random variables with expectation $p$.
	There exists an absolute constant $C < \infty$ such that if $|a_i| \ge 1$ for at least
	$m$ indices $i \in [n]$, then for all $r \ge 1$,
	\[
	    \conc(X,r) \le \frac{Cr}{\sqrt{m p (1-p)}}.
	\]
\end{lemma}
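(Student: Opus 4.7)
The plan is to reduce this small-ball estimate to the classical Kolmogorov--Rogozin concentration inequality. I will carry out the three steps below; there is only one mild subtlety to keep track of.

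First, I would perform a routine conditioning reduction. Let $I := \{i \in [n] : |a_i| \ge 1\}$, so that $|I| \ge m$. Conditioning on the variables $(\beta_j)_{j \notin I}$ replaces $X$ by $c + \sum_{i \in I} a_i \beta_i$ for a deterministic shift $c$, and since shifts do not affect the concentration function it suffices to bound $\conc\bigl(\sum_{i \in I} a_i \beta_i,\, r\bigr)$. Hence I may assume without loss of generality that $n = m$ and $|a_i| \ge 1$ for every $i \in [m]$.

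Second, and as the core of the argument, I would invoke Kolmogorov--Rogozin: for independent real random variables $Y_1, \ldots, Y_m$ and for any choice of parameters $L_k \in (0, L]$,
\[
\conc\left(\sum_{k=1}^m Y_k,\; L\right) \;\le\; \frac{C_0 \, L}{\sqrt{\sum_{k=1}^m L_k^2 \bigl(1 - \conc(Y_k;\, L_k)\bigr)}},
\]
where $C_0$ is an absolute constant. I would apply this with $Y_k = a_k \beta_k$, $L = r$, and auxiliary scale $L_k = 1/3$, which is permissible since $r \ge 1 > 1/3$. The reason to take $L_k$ strictly below $1/2$ is that the random variable $a_k \beta_k$ has support $\{0, a_k\}$ with $|a_k| \ge 1 > 2 L_k$, so no closed interval of length $2 L_k$ can contain both atoms. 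Therefore $\conc(a_k \beta_k;\, L_k) = \max(p, 1-p)$, and
\[
1 - \conc(a_k \beta_k;\, L_k) \;=\; \min(p, 1-p) \;\ge\; p(1-p).
\]

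Plugging these bounds into Kolmogorov--Rogozin yields
\[
\conc(X, r) \;\le\; \frac{C_0 \, r}{\sqrt{m \cdot (1/9) \cdot p(1-p)}} \;=\; \frac{3 C_0 \, r}{\sqrt{m\, p(1-p)}},
\]
which is the claimed bound with $C = 3 C_0$. I do not anticipate any real obstacle: the argument is essentially a direct appeal to a classical small-ball tool together with the trivial observation that the two atoms of $a_k \beta_k$ are separated at scale $L_k$. One could alternatively derive the $r = 1$ case from Esseen's characteristic-function inequality via the product bound $|\phi_X(t)|^2 \le \exp\bigl(-2 p(1-p) \sum_k \sin^2(a_k t / 2)\bigr)$ and then cover a ball of radius $r$ by $O(r)$ unit balls; but the Kolmogorov--Rogozin route is cleaner and avoids a case analysis on the size of $|a_k t|$.
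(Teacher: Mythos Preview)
Your argument is correct. The conditioning reduction is standard, and the application of the Kolmogorov--Rogozin inequality with $L_k = 1/3$ is valid: each summand $a_k\beta_k$ has two atoms at distance $|a_k| \ge 1 > 2L_k$, so $1 - \conc(a_k\beta_k; L_k) = \min(p,1-p) \ge p(1-p)$, and the conclusion follows.

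Your route is genuinely different from the paper's. The paper proceeds via the Esseen--Hal\'asz characteristic-function approach (essentially the ``alternative'' you sketch in your closing sentence): it bounds $\conc(X,1)$ by $C\int_{-1}^{1} |\E e^{itX}|\,dt$, factors the characteristic function over $j$, applies H\"older's inequality to reduce to a single summand, and then estimates $\int_{-1}^{1}|1-2p(1-p)(1-\cos(a_j t))|^{m/2}\,dt$ by a change of variables and elementary calculus; the general $r\ge 1$ case is then obtained by covering. Your approach black-boxes all of this into one invocation of Kolmogorov--Rogozin and is considerably shorter; the paper's approach is closer to self-contained (modulo the Esseen-type lemma it quotes) and makes the Fourier mechanism explicit, but both yield the same bound up to the absolute constant.
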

This is a simple generalization of Erd\H{o}s's strengthening of the Littlewood-Offord theorem~\cite{littlewood1943number,erdos1945lo}. We provide the proof (based on an idea of Hal\'asz~\cite{halasz1977estimates}; see also~\cite{NguyenVu}) for completeness. 

\medskip 

\begin{proof} 
It suffices to prove the statement for $r = 1$, as the dependence on $r$ follows from a union bound. 
By a standard computation (see e.g. \cite[Lemma 6.2]{NguyenVu}) we have that
$$
\conc(X,1) \leq C \int_{-1}^{1} |\E [\exp(it X)]| dt
$$
for a universal constant $C>0$. Write
$$
J := \{j : \left| a_j \right| \ge 1 \}.
$$
By the independence of the $\beta_j$'s and using H\"{o}lder's inequality we have
\begin{align*}
\int_{-1}^{1} |\E [\exp(it X)]| dt ~& = \int_{-1}^{1} \left |\prod_{j \in [n]} \E [\exp(it a_j \beta_j)] \right | dt \\
& \leq \int_{-1}^{1} \left |\prod_{j \in J} \E [\exp(it a_j \beta_j)] \right | dt \\
& \leq   \prod_{j \in J} \left ( \int_{-1}^{1} \left | \E [\exp(it a_j \beta_j)] \right |^m dt \right )^{1/m},
\end{align*}
where $m$ is the cardinality of $J$. The proof would therefore be concluded by proving that for all $j \in J$, one has
\begin{equation} \label{ntsLO}
\int_{-1}^{1} \left | \E [\exp(it a_j \beta_j)] \right |^m dt \leq \frac{C'}{\sqrt{m p(1-p)}}
\end{equation}
for a universal constant $C'$. 
We have
$$
\E [\exp(it a_j \beta_j)] = \left( 1 - p \right) + p \exp(i t a_j ),
$$
so
$$
\left | \E [\exp(it a_j \beta_j)] \right |^2 = 1 - 2 p(1-p) (1-\cos(a_j t)),
$$
and thus (substituting $w=a_j t$) we have 
$$
\int_{-1}^{1} \left | \E [\exp(it a_j \beta_i)] \right |^m dt = \frac{1}{\left| a_j \right|} \int_{-\left| a_j \right|}^{\left| a_j \right|}  \bigl (1 - 2 p(1-p) (1-\cos(w)) \bigr )^{m/2} dw.
$$
Using the periodicity of $\cos(x)$, its monotonicity in the interval $[0,\pi]$, and also using the fact that $\left| a_j \right| \geq 1$, 
we have
$$
\frac{1}{\left| a_j \right|} \int_{-\left| a_j \right|}^{\left| a_j \right|}  \bigl (1 - 2 p(1-p) (1-\cos(w)) \bigr )^{m/2} dw \leq 4 \int_{-\pi/2}^{\pi/2} \bigl (1 - 2 p(1-p) (1-\cos(w)) \bigr )^{m/2} dw.
$$
Next, using the fact that $1-\cos(x) \geq x^2/8$ for $x \in [-\pi/2, \pi/2]$, we have
$$
\int_{-\pi/2}^{\pi/2} \bigl (1 - 2 p(1-p) (1-\cos(w)) \bigr )^{m/2} dw \leq \int_{-\pi/2}^{\pi/2} \bigl (1 - \tfrac 1 4 p(1-p) w^2 \bigr )^{m/2} dw \leq \frac{C''}{\sqrt{m p(1-p)}}
$$
for some constant $C''>0$. Putting the last displays together gives~\eqref{ntsLO}. 
\end{proof}

We also use the following result, which roughly states that 
if $X = \left( X_1, \ldots, X_n \right)$ is a random vector with independent coordinates 
and the distributions of the $X_i$ are well spread on the line, 
then the distribution of a linear image of $X$ by a certain linear transformation is also well-spread. 
This result will be used in conjunction with the Littlewood-Offord-type lemma above. 
It is a simple analog (but not a special case) of \cite[Corollary 1.5]{rudelson2014smallball}.

\begin{lemma} \label{lem:RV}
Let $1 \leq d < n$ be integers. Suppose that $X = \left(X_1,\ldots, X_d \right)$ is a random vector where the $X_i$ are independent real-valued random variables, and that $t,q\ge 0$ are such that for all $i \in [d]$,
\[
	\conc(X_i, t) \le q.
\]
Suppose also that $|X_i| \leq K$ almost surely for all $i \in [d]$ and for some $K>0$.
Let $T$ be a linear isometric embedding of $\R^d$ in $\R^n$ and let $H \subset \R^n$ be an $(n-1)$-dimensional subspace. Let $P_H$ denote the orthogonal projection from $\R^n$ onto $H$. Then there exists an absolute constant $C < \infty$ such that
\[
	\conc(P_H T X, t\sqrt{d}) \le (Cq)^d (K/t+1) \sqrt{d}.
\]
\end{lemma}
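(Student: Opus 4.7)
The main idea is that $M := P_H T$ is ``almost'' an isometric embedding of $\R^d$ into $H$: letting $u$ be a unit vector orthogonal to $H$ and setting $w := T^* u \in \R^d$, one uses $T^* T = I_d$ and $P_H = I_n - uu^*$ to compute
\[
 M^* M = T^*P_H T = I_d - ww^*,\qquad \|w\|\le 1.
\]
Hence the singular values of $M$ are $1$ with multiplicity $d-1$ and $\sqrt{1-\|w\|^2}$, so that $M$ acts isometrically on the hyperplane $w^\perp \subset \R^d$; the only potentially lossy direction is $w$ itself. This structural observation is the starting point of the argument.

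Next I would reduce the task to a small-ball estimate for $X$ inside $\R^d$. A direct expansion of $\|Mx - q\|^2$ as a quadratic form in $x$ yields
\[
 \|Mx - q\|^2 \;=\; \|P_{w^\perp}(x-\tilde p)\|^2 \;+\; g\bigl(\langle x-\tilde p,\, w/\|w\|\rangle\bigr),
\]
for an appropriate $\tilde p\in\R^d$ depending only on $q$ and a (possibly degenerate) quadratic polynomial $g$. Consequently the event $\{\|MX - q\|\le t\sqrt d\}$ forces $\|P_{w^\perp}(X - \tilde p)\| \le t\sqrt d$, together with a constraint on $s := \langle X - \tilde p,\, w/\|w\|\rangle$ that becomes empty in the borderline case $\|w\|=1$.

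To handle the possibly unconstrained $s$-direction I would use a deterministic partition. The bound $|X_j|\le K$ together with $\|w\|\le 1$ gives $|\langle X, w\rangle|\le K\sqrt d$, and since we may restrict to $\|q\|\le O(K\sqrt d)$ (otherwise the small-ball probability vanishes), the random variable $s$ takes values in a deterministic interval of length $O(K\sqrt d)$. Partitioning this interval into $N = O(K/t+1)$ subintervals $I_1,\dots,I_N$ of length $t\sqrt d$ and summing over which $I_k$ contains $s$ reduces the whole problem to showing, for each subinterval $I$,
\[
 \P\bigl[\,\|P_{w^\perp}(X-\tilde p)\|\le t\sqrt d \ \text{and}\ s\in I\,\bigr] \;\le\; (Cq)^d\sqrt d.
\]

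The heart of the proof, and the main obstacle, is this last estimate: although $X$ has independent coordinates in the standard basis of $\R^d$, the projection $P_{w^\perp}$ is generically not axis-aligned, so the coordinates of $P_{w^\perp}X$ are dependent. I would prove the bound via a Fourier-analytic small-ball inequality of Esseen type applied to $X$ itself. By independence $\phi_X(\xi) = \prod_j \phi_{X_j}(\xi_j)$, and the hypothesis $\conc(X_j, t)\le q$ yields, via a Halász-type argument along the lines of the proof of Lemma~1, the pointwise decay $|\phi_{X_j}(\xi)|^2 \le \exp(-c\min(1, t^2\xi^2))$. Substituting this into Fourier inversion for the indicator of the slab $\{\|P_{w^\perp}(x-\tilde p)\|\le t\sqrt d,\ s\in I\}$ in $\R^d$ and computing the resulting integral in coordinates aligned with $w$ and $w^\perp$ yields the desired $(Cq)^d\sqrt d$; summing over the $N = O(K/t+1)$ intervals produces the claimed bound $(Cq)^d(K/t+1)\sqrt d$.
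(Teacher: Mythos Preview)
Your geometric reduction is correct and essentially mirrors the paper's: both arguments identify that $P_H T$ is an isometry on a $(d-1)$-dimensional subspace of $\R^d$ (the paper calls this subspace $W=V\cap H$ inside $V=T(\R^d)$, you call it $w^\perp$ inside $\R^d$), with a single degenerate direction whose contribution is controlled deterministically by the boundedness $|X_i|\le K$. Your partitioning of the $w$-direction into $O(K/t+1)$ subintervals of length $t\sqrt d$ is the discrete analogue of the paper's Fubini integration in the $v$-direction over an interval of length $(K/t+1)\sqrt d$. So through Step~3 the two proofs are equivalent.

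The genuine gap is in your final step. The claim that $\conc(X_j,t)\le q$ implies a pointwise bound $|\phi_{X_j}(\xi)|^2\le\exp\bigl(-c\min(1,t^2\xi^2)\bigr)$ is false in general: if $X_j$ is supported on a lattice of spacing larger than $2t$, then $\conc(X_j,t)$ can be made arbitrarily small while $|\phi_{X_j}(\xi)|=1$ for infinitely many $\xi$. The Hal\'asz-type computation in Lemma~1 works because the Bernoulli characteristic function is explicit; a bare concentration-function hypothesis gives no pointwise control of $\phi_{X_j}$, only an integrated one, and that is in the \emph{wrong direction} to feed back into an Esseen bound.

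The fix is exactly what the paper does: after your partition, each piece of the event lies in a Euclidean ball in $\R^d$ of radius $O(t\sqrt d)$, so it suffices to show $\conc(X,\,C t\sqrt d)\le (Cq)^d$. Convolve $X$ with an independent $Y$ uniform on $[-t,t]^d$; then $Z=X+Y$ has density $f_Z(z)=\prod_j \tfrac{1}{2t}\P(|X_j-z_j|\le t)\le (q/2t)^d$, and a triangle inequality plus the volume bound $V_d(Ct\sqrt d)\le (C't)^d$ gives $\P(\|X-y\|\le t\sqrt d)\le\P(\|Z-y\|\le 2t\sqrt d)\le (Cq)^d$. Summing over your $O(K/t+1)$ intervals yields the stated bound. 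This smoothing-plus-density-integration step replaces your Fourier argument and requires no characteristic-function decay.
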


\begin{proof}
By rescaling, we may clearly assume that $t=1$ and replace $K$ by $K'=K/t$. Let $Y_1,...,Y_d$ be independent random variables uniformly distributed on $[-1,1]$ and let $Y=(Y_1,...,Y_d)$. Define $Z = X+Y$, denote by $f_i$ the density of $Z_i$ and by $f$ the density of $Z$. Note that we have
$$
f_i(x) = \tfrac 1 2 P( |X_i-x| \leq 1 ) \leq \conc(X_i, 1) \le q, ~~ \forall x \in \R
$$
and therefore
\begin{equation} \label{eq:boundf}
f(x) \leq q^d, ~~ \forall x \in \R^d.
\end{equation}
Denote by $V$ the image of the operator $T$, and by $P_V$ the orthogonal projection onto $V$. Suppose for now that $V \nsubseteq H$ (the other case is in fact simpler). Define $\tilde H = P_H V$ and $W = V \cap H$. By dimension considerations, there exists a unit vector $v \perp W$ such that $V = \mathrm{sp} \left (W \cup \{v\}\right )$.

Fix a point $x \in \R^n$. By the triangle inequality and since almost surely, $\Vert Y \Vert \leq \sqrt{d}$, we have
\begin{equation} \label{eq:PHconc}
\P \left (\Vert P_H T X - x \Vert < \sqrt d \right ) \leq \P \left ( \Vert P_H T Z - x \Vert \leq 2 \sqrt{d} \right  ).
\end{equation}
Now, since $P_W$ is a contraction and $|\det T| = 1$, we have 
\begin{align*}
\P \left ( \Vert P_H T Z - x \Vert \leq 2 \sqrt{d} \right  ) ~& \leq \P \left ( \Vert P_W T Z - P_W x \Vert \leq 2 \sqrt{d} \right  ) \\
& = \int_{ \left \{ z \in V: ~ \Vert P_W z- P_W x \Vert \leq 2 \sqrt d \right \} } f \left (T^{-1} z \right ) dz \\
& = \int_{ \left \{ y \in W: ~ \Vert y- P_W x \Vert \leq 2 \sqrt d \right \} } \int_{\R} f \left (T^{-1} (y + sv) \right ) ds dy.
\end{align*}
Next, since we have by assumption $|X_i| \leq K'$ almost surely and since $|Y_i| \leq 1$, we have $f \left ( T^{-1} (w + sv) \right ) = 0$ for all $w \in W$ whenever $|s| > (K'+1) \sqrt{d}$. Plugging this fact, together with equation \eqref{eq:boundf}, into the last inequality yields
\begin{align*}
\P \left ( \Vert P_H T Z - x \Vert \leq 2 \sqrt{d} \right  )~& \leq \mathrm{Vol}_W \left (\left \{ y \in W; ~ \Vert y- P_W x \Vert \leq 2 \sqrt d \right \} \right ) q^d (K'+1) \sqrt{d} \\
& \leq (4q)^d (K'+1) \sqrt{d}
\end{align*}
where $\mathrm{Vol}_W$ denotes the $(d-1)$-dimensional Lebesgue measure in $W$, and in the second inequality we have used a standard estimate related to the volume of the $(d-1)$-dimensional unit ball. Together with equation \eqref{eq:PHconc} we conclude that
$$
\conc(P_H T X, t\sqrt{d}) \le (4q)^d (K'+1) \sqrt{d}
$$
which finishes the proof for that case that $V \nsubseteq H$. For the (simpler)
case that $V \subseteq H$ we just plug in equation \eqref{eq:PHconc} with \eqref{eq:boundf} and with the same estimate for the volume of the Euclidean ball that we have used above.
\end{proof}

%

\subsection{Delocalization}

\subsubsection{Delocalization of eigenvectors of the adjacency matrix}\label{sec:adj_union_bound}

\begin{theorem}\label{thm:adj-main}
    Fix $p \in \left( 0, 1 \right)$,
    and let $G$ be an instance of $G(n,p)$.
    For any constant $\eta > 0$
    there exists a finite positive constant $C = C \left( \eta, p \right)$ such that
    asymptotically almost surely all eigenvectors of $A_G$ (normalized to have unit $\ell_2$-norm)
    have at least $\left( 1/2 - \eta \right) n$ entries
    of magnitude at least $\frac{1}{\sqrt{n} \left( \log \left( n \right) \right)^C}$.
\end{theorem}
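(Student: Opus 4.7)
I would prove Theorem~\ref{thm:adj-main} by contradiction via an $\varepsilon$-net argument, combining the small-ball estimates of Lemmas~\ref{lem:interval_prob}--\ref{lem:RV} with the $\ell_\infty$ delocalization of Theorem~\ref{thm:infty_bound}. Assume toward contradiction that a typical $G \sim G(n,p)$ admits a unit eigenvector $v$ of $A_G$ with eigenvalue $\lambda$ and a set $S \subseteq [n]$, $|S| \geq (1/2+\eta)n$, of coordinates of magnitude at most $\alpha := (\sqrt n(\log n)^C)^{-1}$; write $T := [n]\setminus S$. The top eigenvector is delocalized directly since $v_1(A)\approx \vec{1}$ by property~\ref{pr:evec_diff}, so I may assume $|\lambda| \leq 3\sqrt{np(1-p)}$ via property~\ref{pr:eval}. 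Theorem~\ref{thm:infty_bound} gives $\|v\|_\infty \leq (\log n)^{C_\infty}/\sqrt n$, which together with $|v_i|\leq \alpha$ on $S$ forces $\|v_S\|_2 \leq (\log n)^{-C}$: nearly all mass of $v$ lies on $T$.

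The key algebraic step is to restrict the eigenvector equation to $S$-rows and to project by $Q_S$ (projection onto $\vec{1}_S^\perp$):
\[
Q_S A_{ST} v_T \;=\; \lambda\, Q_S v_S - Q_S A_{SS} v_S.
\]
Property~\ref{pr:mat-proj} gives $\|Q_S A_{SS}\|_{\mathrm{op}} \leq 2\sqrt{n/p}\log n$, so the right-hand side has Euclidean norm $R = O(\sqrt n/(\log n)^{C-1})$. Crucially, the off-diagonal block $A_{ST}$ consists of i.i.d.\ Bernoulli entries and is independent of $A_{SS}$, $A_{TT}$, and---once we cover $v$ by a net element---of $v_S$ as well, so the constraint on $A_{ST}$ becomes a genuinely random event.

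I would then union bound over net elements $(T, \bar w, \bar\lambda)$: at most $2^n$ choices for $T$; $O(\sqrt n)$ choices for $\bar\lambda$ at a coarse resolution (adequate because $\|v_S\|_2 \ll 1$ makes the equation insensitive to small perturbations of $\lambda$); and a box-net for $\bar w$ inside the $\ell_\infty$-ball of radius $(\log n)^{C_\infty}/\sqrt n$ at some resolution $\delta$. The point is that Theorem~\ref{thm:infty_bound}'s $\ell_\infty$-box restriction is what keeps the net from exploding. For each net element, conditioning on $A_{SS}$ and $A_{TT}$, the entries $(A_{ST}\bar w)_i$ are independent scalar Bernoulli sums; since $\|\bar w\|_\infty \leq (\log n)^{C_\infty}/\sqrt n$ and $\|\bar w\|_2 \approx 1$, at least $m \gtrsim n/(\log n)^{2C_\infty}$ coordinates satisfy $|\bar w_j| \geq 1/\sqrt n$, and Lemma~\ref{lem:interval_prob} then delivers a per-coordinate small-ball estimate that Lemma~\ref{lem:RV} upgrades to joint $\ell_2$-concentration of $Q_S A_{ST}\bar w$ in a ball of radius $R'' := R + O(\sqrt{n/p}\log n\cdot \delta\sqrt n)$ that absorbs the net-approximation error.

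The main obstacle is the delicate balancing of exponents in the union bound. The concentration gain is $\exp(-\Omega(|S|\log\log n))$ with a constant depending on $C$; the box-net over $\bar w$ costs $\exp(O(|T|\log\log n))$ with a constant depending on $C$ and $C_\infty$; and the enumeration over $T$ costs $2^n$. Because we are strictly in the subcritical regime $|T| < n/2$, the surplus $|S| - |T| \geq 2\eta n$ leaves room for the concentration to dominate, provided the constant $C = C(\eta, p)$ is chosen large enough (essentially of order $C_\infty/\eta$), and the $\log\log n$ factor allows this gain to swallow the $2^n$ term for $n$ large. The technical heart is tracking how $R$, $R''$, and $\delta$ co-depend on $C$ so that the approximation error $\|Q_S A_{ST}(v_T - \bar w)\|_2$ is absorbed without spoiling the gain from concentration; it is precisely here that property~\ref{pr:mat-proj}'s improved bound $\|Q_S P_S A\|_{\mathrm{op}} = O(\sqrt{n/p}\log n)$---polynomially better than the naive $\|A\|_{\mathrm{op}} = O(np)$---is indispensable.
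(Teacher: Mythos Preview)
Your proposal is correct and follows essentially the same route as the paper's proof. The paper packages the argument as a lemma (\prettyref{lem:adj-fixed-W}) for a fixed ``large'' set $W$ (your $T$), builds the same box net inside the $\ell_\infty$-ball afforded by Theorem~\ref{thm:infty_bound}, uses the identical small-ball machinery (Lemmas~\ref{lem:interval_prob} and~\ref{lem:RV}) on $Q_{W^C}P_{W^C}Ax$---which is exactly your $Q_S A_{ST}\bar w$---and finishes with the same union bound over subsets and eigenvectors, choosing $\delta = (\log n)^{-(C_\infty+1)/\eta}$. Two cosmetic differences: the paper works directly with $Q_{W^C}P_{W^C}Av$ rather than your block decomposition $Q_S A_{ST}v_T = \lambda Q_S v_S - Q_S A_{SS} v_S$ (these are the same identity), and your enumeration over $\bar\lambda$ is unnecessary since only the uniform bound $|\lambda|\leq 3\sqrt{np}$ enters the norm estimate for the right-hand side.
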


For the first eigenvector $v_1$ a stronger statement is known; see~\cite{mitra2009entrywise}. Consequently, we focus our attention on the eigenvectors $v_2, \dots, v_n$, which are orthogonal to $v_1$.

The following lemma is the main step towards proving the theorem above.
Recall that $\PT$ denotes the distribution of an instance of $G = G(n,p)$ conditioned on $G \in \Typ$  (see Definition~\ref{def:typical}).
Recall that $\cA_n^C$ denotes the family of graphs $G$ on $n$ vertices such that all of the eigenvectors of $A_G$, normalized to have unit $\ell_2$-norm, have infinity-norm bounded by
$\left( \log \left( n \right) \right)^C / \sqrt{n}$. 
Recall also from Theorem~\ref{thm:infty_bound} that there exists a finite constant $C_{\infty}$ such that 
$\{G(n,p) \in \cA_n^{C_{\infty}} \}$ occurs with probability tending to $1$ as $n \to \infty$ for any fixed $p \in \left( 0, 1 \right)$. 
In what follows, $C_{\infty}$ always denotes this constant. 
\begin{lemma} \label{lem:adj-fixed-W}
    Fix $p \in \left( 0, 1 \right)$ and $\eps \in \left( 1/4, 1/2 \right)$.
    Let $W \subseteq \left[ n \right]$ be of size $\eps n$,
    and let $W^C := \left[n \right] \setminus W$.
    Let $\delta$ be such that $n^{-1/2 + 1/10} < \delta < 1/10$.
    Fix $j \in \left\{2, 3, \dots, n \right\}$.
    Recall that $v_j$ denotes the $j^{\text{th}}$ eigenvector of $A_G$.
    Then there exists a finite constant $C_p$, depending only on $p$, such that
    \begin{equation}\label{eq:bd_adj_fixed_W}
      \PT \left( \left| v_j \left( i \right) \right| \leq \frac{\delta}{\sqrt{n}} \ \text{ for all } \  i \in W^C \, \middle| \, G \in \cA_n^{C_{\infty}} \right)
      \leq
      \left( C_p \log \left( n \right) \right)^{\left( C_{\infty} + 1 \right) n}  \times \delta^{\left( 1- 2 \eps \right) n}.
    \end{equation}
\end{lemma}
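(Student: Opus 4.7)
The plan is a discretization argument combined with small-ball estimates for linear images of independent Bernoulli vectors. Writing $v = v_j$ and $\lambda = \lambda_j$, the eigenvalue equation $Av = \lambda v$ restricted to rows indexed by $W^C$ rearranges to
\[
A|_{W^C \times W}\, v_W \;=\; \lambda\, v_{W^C} \;-\; A|_{W^C \times W^C}\, v_{W^C}.
\]
The assumption $|v(i)|\le\delta/\sqrt n$ for $i\in W^C$ gives $\|v_{W^C}\|_2 \le \sqrt{1-\eps}\,\delta$, and the first step is to derive the deterministic projected bound
\[
R \;:=\; \bigl\|\, Q_{W^C}\, P_{W^C}\, A|_{W^C\times W}\, v_W\,\bigr\|_2 \;\le\; C_p\,\sqrt n\,(\log n)\,\delta
\]
by combining (i) the eigenvalue bound $|\lambda|=O(\sqrt{np})$ from property~\ref{pr:eval} of Definition~\ref{def:typical}, (ii) the operator-norm bound $\|Q_{W^C}P_{W^C}A\|_{\mathrm{op}}\le 2\sqrt{n/p}\log n$ from property~\ref{pr:mat-proj} of Theorem~\ref{thm:typical-props}, and (iii) the approximate orthogonality $v\perp v_1(A)$, which, combined with $v_1(A)\approx\vec 1$, forces $v_{W^C}$ to have mean of order $\delta$. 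Projecting by $Q_{W^C}$ is essential: without this projection, the corresponding norm would be of order $n\delta$, which is too weak for the subsequent small-ball step.

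Second, I will build a net $\mathcal N$ for $v_W$. Under $\cA_n^{C_\infty}$ we have $\|v\|_\infty\le L:=(\log n)^{C_\infty}/\sqrt n$, so $v_W\in[-L,L]^{\eps n}$; discretizing at $\ell_\infty$-scale $c\delta/\sqrt n$ yields $|\mathcal N|\le (C(\log n)^{C_\infty}/\delta)^{\eps n}$. The operator-norm bound above ensures that rounding $v_W$ to its nearest net point perturbs $Q_{W^C}(A|_{W^C\times W}v_W)$ by at most $R$, so on the failure event there exists $\tilde u\in\mathcal N$ with $\|Q_{W^C}(A|_{W^C\times W}\tilde u)\|_2\le 2R$. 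A pigeonhole argument---using $\|v_W\|_2^2\ge 1-(1-\eps)\delta^2\ge 1/2$ together with $\|v_W\|_\infty\le L$---shows that each such candidate $v_W$ has at least $m_0 = \Omega(n/(\log n)^{2C_\infty})$ coordinates of magnitude $\ge c/\sqrt n$, and the union bound is restricted to net points sharing this property.

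Third, for each fixed such $\tilde u$ the rows of $A|_{W^C\times W}$ are independent, so the coordinates of $Y := A|_{W^C\times W}\tilde u$ are independent. The rescaled form of Lemma~\ref{lem:interval_prob} (applied with at least $m_0$ coefficients of magnitude $\ge c/\sqrt n$) gives the per-coordinate small-ball bound
\[
\conc(Y_i,t)\;\le\;\frac{C\,t\sqrt n}{\sqrt{m_0\,p(1-p)}}\;\le\; C_p\,(\log n)^{C_\infty}\,\sqrt n\, t
\]
for $t\ge c/\sqrt n$. Applying Lemma~\ref{lem:RV} with $d=(1-\eps)n$, the isometric embedding of $\R^d$ into $\R^{d+1}$ via a trailing zero coordinate, and a hyperplane $H$ chosen so that $P_H$ implements $Q_{W^C}$, at scale $t:=R/\sqrt d = \Theta(\sqrt p (\log n)\,\delta)$, and using the trivial bound $K:=\sup|Y_i|\le\eps\sqrt n(\log n)^{C_\infty}$, one obtains uniformly in the center $q$
\[
\P\bigl(\|Q_{W^C}Y-q\|_2\le R\bigr)\;\le\;\bigl(C_p\,\delta\,(\log n)^{C_\infty+1}\bigr)^{(1-\eps)n}\cdot\mathrm{poly}(n).
\]
A union bound over $\mathcal N$ multiplies this by the net size; the $\delta$-exponents combine to $(1-\eps)n-\eps n=(1-2\eps)n$, and the polylog exponents combine to $(C_\infty+1-\eps)n+O(\log n)\le (C_\infty+1)n$, giving the claimed bound $(C_p\log n)^{(C_\infty+1)n}\delta^{(1-2\eps)n}$ after absorbing lower-order factors into $C_p$.

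The main technical obstacle is the deterministic projected bound on $R$ in the first step. A naive per-entry treatment of $A|_{W^C\times W}v_W$ yields only $|{\cdot}(i)|=O(\sqrt n\,\delta)$ and hence $\|A|_{W^C\times W}v_W\|_2=O(n\,\delta)$; at the corresponding scale $t=\Theta(\sqrt n\,\delta)$, Lemma~\ref{lem:interval_prob} gives $\conc(Y_i,t)$ of order $n\,\delta\,(\log n)^{C_\infty}\gg 1$, and the product bound is useless. Projecting onto $\vec 1_{W^C}^\perp$---which kills the large-mean component of $A|_{W^C\times W^C}\,v_{W^C}$, enabled by the near-orthogonality of $v$ to $v_1(A)$ and the spectral bound of Theorem~\ref{thm:typical-props}---saves a factor of $\sqrt n/\log n$ and is exactly what makes the Littlewood--Offord plus Rudelson--Vershynin machinery deliver the desired exponent.
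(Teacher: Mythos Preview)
Your proposal is correct and follows essentially the same approach as the paper: a net over candidate vectors supported on $W$ at scale $\delta/\sqrt n$, the operator-norm bound $\|Q_{W^C}P_{W^C}A\|\le 2\sqrt{n/p}\log n$ to control both the $A|_{W^C\times W^C}v_{W^C}$ term and the rounding error, a pigeonhole count of $\Omega(n/(\log n)^{2C_\infty})$ large coordinates from the $\ell_\infty$ bound, and then Lemma~\ref{lem:interval_prob} followed by Lemma~\ref{lem:RV} and a union bound. The only minor inaccuracy is your ingredient~(iii): the approximate orthogonality $v\perp v_1(A)$ is not actually used to bound $R$---ingredients~(i) and~(ii) already suffice, since the projection $Q_{W^C}$ kills the top-eigenvector component of the \emph{output} $P_{W^C}A\,v_{W^C}$ regardless of the mean of $v_{W^C}$---but this is harmless and the argument goes through as written.
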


\begin{proof}
    Our proof proceeds by a union bound over candidate eigenvectors.
    Let $\Omega_W \subset \R^n$ be the set of all vectors obeying the appropriate constraints,
    that is,
    \[
	    \Omega_W := \left\{
		    v \in \R^n \ \, \middle| \,
		    \ \Vert v \Vert = 1,
		    \ \Vert v \Vert_{\infty} \le \frac{\left( \log \left( n \right) \right)^{C_{\infty}}}{\sqrt n},
		    \ |v(i)| \le \frac{\delta }{\sqrt n} \ \ \ \forall i \in W^C
	    \right\}.
    \]
    Given $G \in \cA_n^{C_{\infty}}$, if $\left|v_j \left( i \right) \right| \leq \delta / \sqrt{n}$ for all $i \in W^C$, then $v_j \in \Omega_W$.
    We define a net $\Lambda_W$ over $\Omega_W$ with resolution $R := \delta / \sqrt{n}$ in the following way:
    \begin{multline*}
	\Lambda_W
	:= \left\{ x \in \R^n \, \middle| \,
	    x = R \cdot k,
	    \ k \in \Z^{n},
	    \ k_i = 0\ \ \ \forall 	i \in W^C,
	    \ |k_i| \le \frac{\left( \log \left( n \right) \right)^{C_{\infty}}}{R\sqrt{n}}\ \ \ \forall i \in W, \right. \\
	    \ \Vert x \Vert \in \left[ 1 - \delta, 1 + \delta \right]
		\Bigg\}.
    \end{multline*}
    The discretization $\Lambda_W$ has the
    property that for any $v \in \Omega_W$,
    there exists $x \in \Lambda_W$ such that
    $u := v - x \in \left[-\tfrac{\delta}{\sqrt n},
	\tfrac{\delta}{\sqrt n}\right]^n$.
    The cardinality of the net $\Lambda_W$ can be bounded from above by noting that
    for any $x \in \Lambda_W$,
    the coordinates of $x$ in $W^C$ are fixed, while the coordinates in $W$ can take on
    at most $2 \left( \log \left( n \right) \right)^{C_{\infty}} / \left( R \sqrt{n} \right) + 1$ values,
    and so
\begin{equation} \label{eq:netsize1}
|\Lambda_W|
	\le \left( \frac{2\left( \log \left( n \right) \right)^{C_{\infty}}}{R \sqrt n} + 1 \right)^{|W|}
	\le \left( \frac{3\left( \log \left( n \right) \right)^{C_{\infty}}}{\delta} \right)^{\epsilon n}.
\end{equation}

    For a vector $v \in \R^n$, define the event
    \[
	F_{W,v,A} := \left\{
	    Q_{W^C} P_{W^C} A v \in B \left( 0,\delta n^{1/2} \log \left( n \right) \right)
	\right\},
    \]
    where $P_{W^C}$ is the projection onto the coordinates of $W^C$,
    and $Q_{W^C}$ is the orthogonal projection onto the space orthogonal to $\vec{1}_{W^C}$.
    We claim that
    \begin{equation} \label{eq:FW1}
    G \in \Typ \mbox{ and } v_j \in \Omega_W \Rightarrow F_{W,v_j,A}.
    \end{equation}

    Indeed, by definition,
    $\left( A v_j \right) \left( i \right) = \lambda_j v_j \left( i \right)$
    for all coordinates $i \in \left[ n \right]$.
    Since $G \in \Typ$ and $j \geq 2$, we have $\left| \lambda_j \right| \leq 3 \sqrt{n}$.
    Since $v_j \in \Omega_W$, we have
    $\left| v_j \left( i \right) \right| \leq \delta / \sqrt{n}$ for all $i \in W^C$,
    and so $\left| \left( P_{W^C} A v_j \right) \left( i \right) \right| \leq 3 \delta$ for all $i \in W^C$.
    Note also that $\left( P_{W^C} A v_j \right) \left( i \right) = 0$ for all $i \in W$.
    Therefore $P_{W^C} A v_j \in B \left( 0, 3 \delta \sqrt{n} \right) \subseteq B \left( 0,\delta n^{1/2} \log \left( n \right) \right)$.
    Since $\Vert Q_{W^C} \Vert = 1$, it follows that the event $F_{W, v_j, A}$ holds, which establishes the implication in~\eqref{eq:FW1}.

    Next, for $v \in \Omega_W$, let $x \in \Lambda_W$ be the closest point in $\Lambda_W$
    such that $u := v - x \in \left[ - \delta / \sqrt{n}, \delta / \sqrt{n} \right]^n$ 
    (such an $x \in \Lambda_W$ exists; in case it is not unique, take one of the closest points arbitrarily).
    Then
    \begin{equation*}
	\Vert Q_{W^C} P_{W^C}  A x \Vert
	\le \Vert Q_{W^C} P_{W^C} A v\Vert + \Vert Q_{W^C} P_{W^C} A u\Vert
	\le \Vert Q_{W^C} P_{W^C} A v\Vert + 2 \delta \sqrt{n/p} \log n,
    \end{equation*}
    where the first inequality follows from the triangle inequality,
    and the second inequality follows from the Cauchy-Schwarz inequality,
    the fact that $\Vert u \Vert \le \delta$, and \prettyref{thm:typical-props}, part \ref{pr:mat-proj}.
    Therefore, if $G \in \Typ$ and $v_j \in \Omega_W$, then we have
    \begin{equation} \label{eq:HW1}
    F_{W, v_j, A} \mbox { holds } \Rightarrow \exists x \in \Lambda_W \mbox{ such that } H_{W,x,A} \mbox{ holds},
    \end{equation}
    where
    \[
	H_{W,x,A} := \left\{
	    Q_{W^C} P_{W^C} A x \in B\left(0, 3 \delta \sqrt{ \tfrac n p} \log \left( n \right) \right)
	\right\}.
    \]

    We now fix $x \in \Lambda_W$, and bound $\P \left[ H_{W,x,A} \right]$.
    Note that $x_i = 0$ for all $i \in W^C$, so we can write $x = P_W x$,
    where $P_W$ is the coordinate projection onto $W$.
    Define $Y := P_{W^C} A x$. Thus $Y_i = 0$ for $i \in W$,
    while for $i \in W^C$ we have
    $Y_i = \sum_{j \in W} A_{ij} x_j$,
    that is, $Y_i$ is a sum of scaled independent Bernoulli random variables.
    By design, for any $x \in \Lambda_W$ we have
    $\Vert x \Vert_{\infty} \le \frac{\left( \log \left( n \right) \right)^{C_{\infty}} }{\sqrt n}$
    and $\Vert x \Vert_2^2 \ge \left( 1 - \delta \right)^2 > \tfrac{3}{4}$,
    and so there are at least $\frac{n}{2 \left( \log \left( n \right) \right)^{2C_{\infty}}}$ entries of $x$ with magnitude at least $\frac{1}{2 \sqrt n}$.
    We can now apply \prettyref{lem:interval_prob} to $2 \sqrt{n} Y_i$
    with $m = \frac{n}{2 \left( \log \left( n \right) \right)^{2C_{\infty}}}$
    and $r = 12 \delta \sqrt{ \tfrac n p} \log \left( n \right)$ to get that
    \begin{equation*}
      \conc \left( Y_i,\frac{6 \delta}{\sqrt{p}} \log \left( n \right) \right)
      = \conc \left( 2 \sqrt{n} Y_i, 12 \delta \sqrt{\tfrac n p } \log \left( n \right) \right) 
      \leq \frac{C}{p \sqrt{1-p}} \delta \left( \log \left( n \right) \right)^{C_{\infty}+1}
    \end{equation*}
    for some finite universal constant $C > 0$.

    Furthermore, the random variables $\left\{ Y_i \right\}_{i \in W^C}$ are independent.
    This is because these random variables are functions of disjoint subsets of the random variables $\left\{ A_{ij} \right\}_{i,j \in \left[ n \right], i < j}$, 
    because we can write $Y = P_{W^C} A x = P_{W^C} A P_W x$ and since $x_i = 0$ for all $i \in W^C$, and $W$ and $W^C$ are disjoint.
    Thus, we can apply \prettyref{lem:RV} to $Y$, with
    $t = \frac{6 \delta}{\sqrt{p}} \log \left( n \right)$,
    $d = (1-\epsilon)n$,
    $K = n^3$,
    $q = \frac{C}{p \sqrt{1-p}} \delta \left( \log \left( n \right) \right)^{C_{\infty}+1}$, and
    $H = \vec{1}_{W^C}^\perp $ to get
	\begin{equation} \label{eq:HWbound1}
	\begin{split}
      \P \left[ H_{W,x,A} \right]
      \le & ~ \conc \left( Q_{W^C} Y, 3 \delta \sqrt{ \tfrac n p} \log \left( n \right) \right) \\
      \le & ~ \conc \left( Q_{W^C} Y, t \sqrt{d} \right)
      \le \left( \frac{C'}{p \sqrt{1-p}} \delta \left( \log \left( n \right) \right)^{C_{\infty}+1} \right)^{\left( 1 - \eps \right) n} \times n^6 
	\end{split}
	\end{equation}
    for a universal constant $C'>0$, where in the second inequality we used the fact that $\eps \in \left( 1/4, 1/2 \right)$.

    Finally, we take a union bound to arrive at our result:
    \begin{multline*}
      \PT \left( \left| v_j \left( i \right) \right| \leq \frac{\delta}{\sqrt{n}} \ \text{ for all } \  i \in W^C \, \middle| \, G \in \cA_n^{C_{\infty}} \right) \\
      \begin{aligned}
      & \stackrel{\eqref{eq:FW1}}{\le} \PT \left( F_{W, v_j, A} \, \middle| \, G \in \cA_n^{C_{\infty}} \right)
      \stackrel{\eqref{eq:HW1}}{\le} \PT \left( \cup_{x \in \Lambda_W} H_{W,x,A} \, \middle| \, G \in \cA_n^{C_{\infty}} \right) \\
      &\le 2 \P \left( \cup_{x \in \Lambda_W} H_{W,x,A} \right)
      \le 2 \left| \Lambda_W \right| \max_{x \in \Lambda_W} \P \left( H_{W,x,A} \right) \\
      & \stackrel{ \eqref{eq:netsize1} \wedge \eqref{eq:HWbound1} }{\le} 2 n^6 \left( \frac{3 \left( \log \left( n \right) \right)^{C_{\infty}}}{\delta} \right)^{\eps n}
   	\times \left( \frac{C'}{p \sqrt{1-p}}  \delta \left( \log \left( n \right) \right)^{C_{\infty}+1} \right)^{\left( 1 - \eps \right) n}  \\
      &= 2 n^6 \times 3^{\eps n} \times \left ( \frac{C'}{p \sqrt{1-p}} \right )^{\left( 1- \eps \right) n} \times \delta^{\left( 1 - 2 \eps \right) n} \times \left( \log \left( n \right) \right)^{\left( C_{\infty} + 1 - \eps \right) n} \\
      &\le \left( C_p \log \left( n \right) \right)^{\left( C_{\infty} + 1 \right) n}  \times \delta^{\left( 1- 2 \eps \right) n},
      \end{aligned}
    \end{multline*}
    where in the third inequality, which holds for $n$ large enough, we used Theorem \ref{thm:typical} and Theorem \ref{thm:infty_bound}.
    \end{proof}

Using this lemma we now prove \prettyref{thm:adj-main}.

\medskip

\begin{proof}
    Fix a constant $\eta > 0$, and let $\eps = 1/2 - \eta$.
    We apply \prettyref{lem:adj-fixed-W},
    and take a union bound over the possible subsets $W\subseteq\left[n\right]$
    (of which there are at most $2^n$)
    and over the possible eigenvectors.
    The lemma thus tells us that there exists a constant $C$
    such that,
    conditioned on $G \in \Typ$ and $G \in \cA_n^{C_{\infty}}$, the probability that there exists a subset $W \subseteq \left[ n \right]$ of size $\eps n$
    and an eigenvector $v_j$, with $j \geq 2$,
    such that $\left| v_j \left( i \right) \right| \leq \delta / \sqrt{n}$ for all $i \in W^C$
    is at most
    \[
      \left( C \log \left( n \right) \right)^{\left( C_{\infty} + 1 \right) n} \times \delta^{2\eta n}.
    \]
    Now choosing $\delta := \left( \log \left( n \right) \right)^{-\left( C_{\infty} + 1 \right)/\eta}$, we get that
    conditioned on $G \in \Typ$ and $G \in \cA_n^{C_{\infty}}$,
    the probability that there are not at least $\left( 1/2 - \eta \right) n$ entries of each eigenvector of $A$ of magnitude at least
    \[
      \frac{1}{\sqrt{n} \left( \log \left( n \right) \right)^{\left( C_{\infty} + 1 \right)/\eta}}
    \]
    is at most $\left( C / \log \left( n \right) \right)^{\left( C_{\infty} + 1 \right)n}$.
    Since $G \in \Typ \cap \cA_n^{C_{\infty}}$ asymptotically almost surely (by Theorems \ref{thm:typical} and \ref{thm:infty_bound}), we are done.
\end{proof}

\subsubsection{Delocalization of eigenvectors of the normalized adjacency matrix}

In this section we prove our main delocalization result, Theorem \ref{thm:deloc}.
Our proof for the normalized adjacency matrix also proceeds by a union bound over candidate eigenvectors.
However, it is slightly more involved than for the (unnormalized) adjacency matrix.
For one, the degree normalizations $D^{\smlhf}AD^{\smlhf}$ introduce correlations between the rows of the matrix.
Another major issue is the $\ell_\infty$ bound, Theorem~\ref{thm:infty_bound},
which is known to hold for the adjacency matrix,
but it is not known to hold for the normalized case.
Still, with the help of some additional technical lemmas and with a more careful choice of a net for candidate eigenvectors,
the proof proceeds more or less along the same lines.

The central lemma in the proof is the following.
\begin{lemma} \label{lem:fixed-W}
    For every $p \in \left( 0, 1 \right)$, there exists a constant $C_p > 0$ such that the following holds.
    Let $\eps \in \left( 1/4, 1/2 \right)$, let $W \subseteq \left[ n \right]$ be a subset of size $\eps n$, and let $W^C := \left[n \right] \setminus W$.
    Let $\delta$ be such that $n^{-1/10} < \delta < 1/100$ and let $n \geq C_p$ be an integer.
    Let $G$ be an instance of $G(n,p)$, and let $v_2$ denote the second eigenvector of $\wh{A}$, normalized such that $\left\| v_2 \right\| = 1$.
    Then we have
    \begin{equation}\label{eq:bd_fixed_W}
      \PT \left( \left| v_2 \left( i \right) \right| \leq \frac{\delta}{\sqrt{n}} \ \text{ for all } \  i \in W^C \right)
      \leq
      \left( C_p \log \left( n \right) \right)^{2n}  \times \delta^{\left( 1- 2 \eps \right) n}.
    \end{equation}
\end{lemma}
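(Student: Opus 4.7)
The plan is to follow the proof of Lemma~\ref{lem:adj-fixed-W} closely, modifying it in two places to handle the new difficulties in the normalized setting: the dependencies between entries of $\wh A$ coming from the degree normalization, and the absence of an a priori $\ell_\infty$ delocalization bound on eigenvectors of $\wh A$.

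To address the first difficulty I would invoke Theorem~\ref{thm:typical-props} part~(\ref{pr:subtracting-mean}) with $S=W^C$, which allows us to replace $\wh A$ by $\tfrac{1}{np}A$ on vectors nearly orthogonal to $\vec 1$. The hypothesis $|\langle v_2,\vec 1\rangle|\le\alpha$ is met with $\alpha=O(\log n/\sqrt n)$: $v_2\perp v_1(\wh A)$ as eigenvectors of $\wh A$ with different eigenvalues, and $v_1(\wh A)$ is close to $\vec 1$ by part~(\ref{pr:evec_diff}). The resulting approximation error is $O((\log n)^2/n)$. Combining this with $\wh A v_2=\lambda_2 v_2$, the bound $|\lambda_2|=O(1/\sqrt n)$ from Definition~\ref{def:typical}, and the hypothesis $|v_2(i)|\le\delta/\sqrt n$ on $W^C$ produces an event $F_{W,v_2,A}:=\{\|Q_{W^C}P_{W^C}A\,v_2\|_2\le c(p)\,\delta\sqrt n\log n\}$, directly analogous to the one in the proof of Lemma~\ref{lem:adj-fixed-W}.

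To address the second difficulty I would construct a two-scale net on candidate vectors. Since $\|v_2\|_2=1$, for any threshold $T\ge 1$ at most $n/T^2$ coordinates of $v_2$ can have magnitude exceeding $T/\sqrt n$, and I will choose $T=\log n$. Each candidate is parametrized by a subset $H\subseteq W$ of size at most $n/T^2$ (the ``large'' coordinates), together with a $\delta/\sqrt n$-discretization of the entries on $H$ in $[-1,1]$ and a $\delta/\sqrt n$-discretization of the entries on $W\setminus H$ in $[-T/\sqrt n,T/\sqrt n]$; entries on $W^C$ are set to $0$. A direct count shows the net has size at most $e^{O(n)}\cdot(T/\delta)^{\epsilon n}$, where the subexponential factor absorbs both the choice of $H$ (bounded via $\sum_{k\le n/T^2}\binom{\epsilon n}{k}\le (e\epsilon T^2)^{n/T^2}$) and the $(\sqrt n/\delta)^{n/T^2}$ contribution from discretizing the large coordinates (using $T=\log n$ and $\delta>n^{-1/10}$). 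With $T=\log n$, the dominant factor is $(\log n)^{\epsilon n}\delta^{-\epsilon n}$.

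For each $x$ in the net I would bound $\P[F_{W,x,A}]$ via Lemmas~\ref{lem:interval_prob} and~\ref{lem:RV}. Writing $(Ax)_i=X_i+Y_i$, where $X_i$ gathers contributions from coordinates in $H$ and $Y_i$ from coordinates in $W\setminus H$, the vectors $(X_i)_{i\in W^C}$ and $(Y_i)_{i\in W^C}$ are mutually independent and each has independent coordinates, so $\conc(Q_{W^C}(X+Y),\cdot)\le\conc(Q_{W^C}Y,\cdot)$. Provided $\|P_H x\|_2^2$ is bounded away from $\|x\|_2^2$, a pigeonhole computation produces $\Omega(n/T^2)$ coordinates of $x$ on $W\setminus H$ of magnitude $\ge 1/(2\sqrt n)$; Lemma~\ref{lem:interval_prob} then yields per-coordinate concentration $O(\delta T\log n/\sqrt{p(1-p)})$, which Lemma~\ref{lem:RV} upgrades to the joint bound $(C_p\delta T\log n)^{(1-\epsilon)n}$ up to polynomial factors. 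Combining with the net-size estimate and taking a union bound gives the target $(C_p\log n)^{2n}\delta^{(1-2\epsilon)n}$ after absorbing the constants $C_p^{O(n)}$ and the $e^{O(n)}$ subexponentials. The hardest step will be the degenerate case where $\|P_H x\|_2^2$ is close to $\|x\|_2^2$, so the pigeonhole step fails and the small-ball bound on $Y$ degenerates; I expect to handle this either by invoking Theorem~\ref{thm:typical-props} part~(\ref{pr:large-entries}) to rule out such strongly localized eigenvectors, or by observing that the subnet of highly localized candidates has cardinality only $e^{o(n)}$ for $T=\log n$, so that even a trivial per-$x$ estimate suffices in that regime.
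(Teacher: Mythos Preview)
Your overall plan is sound, and the handling of the degree normalization via Theorem~\ref{thm:typical-props}, part~\ref{pr:subtracting-mean}, is essentially what the paper does (the paper makes the switch $\wh A\to\tfrac{1}{np}A$ at the net point $x$ rather than at $v_2$, but either ordering works). The real issue is your substitute for the missing $\ell_\infty$ bound: neither of the two fixes you propose for the degenerate case actually goes through.

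For option~(a): Theorem~\ref{thm:typical-props}, part~\ref{pr:large-entries}, gives a nontrivial lower bound on $\|P_S v_2\|_2$ only when $\alpha^4\lambda_2\,np\gg\log n$; since $\lambda_2=\Theta(n^{-1/2})$, this forces $\alpha\gtrsim \log n/(np)^{1/8}$. Your threshold $T/\sqrt n=\log n/\sqrt n$ is far below this, so the lemma says nothing. If instead you apply it at $\alpha=\log n/n^{1/8}$ you learn only that entries of size at most $\log n/n^{1/8}$ carry mass $\ge c$; this does \emph{not} rule out that most of the mass sits on entries in $(\log n/\sqrt n,\ \log n/n^{1/8})$, which lie inside your set $H$. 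For option~(b): the degenerate subnet is not $e^{o(n)}$. Under the constraint $\|P_{W\setminus H}x\|_2\le c$ for a fixed constant $c<1$, the $\delta/\sqrt n$-lattice in the $c$-ball of $\R^{|W\setminus H|}$ still has $(C'c/\delta)^{\Theta(\eps n)}$ points, and with no useful small-ball bound available in this regime the union bound fails.

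The paper's resolution is different from your two-scale net. It invokes part~\ref{pr:large-entries} \emph{upfront}, at the workable scale $\alpha=\log n/n^{1/8}$, to get $\|v_2|_{S(v_2)}\|_2\ge 1/10$ with $S(v)=\{i:|v(i)|\le\log n/n^{1/8}\}$, and bakes the corresponding constraint $\|x|_{S'(x)}\|_2\ge 1/20$ into the net $\Lambda_W$ (a plain $\delta/\sqrt n$-lattice in the unit ball, whose size is controlled by a volume argument). Then, for each net point $x$, it runs a \emph{dyadic} pigeonhole over levels $\ell\in[\tfrac{1}{10}\log n,\ \log n]$ to find a scale $\ell^*$ with $|S_{\ell^*}|\,4^{-\ell^*}\ge c/\log n$, and applies Lemma~\ref{lem:interval_prob} at that single scale. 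The lower bound $\ell^*\ge\tfrac{1}{10}\log n$ is exactly what makes $m=|S_{\ell^*}|$ polynomially large, and it is the upper truncation at $\log n/n^{1/8}$---coming from part~\ref{pr:large-entries}---that enforces it. Your single cutoff at $1/(2\sqrt n)$ cannot replace this: when the mass is spread over the range $[n^{-1/2},\,n^{-1/8}]$, a one-scale Littlewood--Offord bound loses a polynomial factor in $n$ and the argument collapses.
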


\begin{proof} \\
\textbf{Step 1:} For a vector $v \in \R^n$, define the set of indices
    \[
     S \left( v \right) := \left\{ i \in \left[ n \right] \, : \, \left| v \left( i \right) \right| \leq \frac{\log \left( n \right)}{n^{1/8}} \right\},
    \]
    and also
    \[
     S' \left( v \right) := \left\{ i \in \left[ n \right] \, : \, \left| v \left( i \right) \right| \leq \frac{2 \log \left( n \right)}{n^{1/8}} \right\}.
    \]
    Define the subset $\Omega_W \subset \R^n$ as follows:
    \[
	    \Omega_W := \left\{
		    v \in \R^n \, \middle| \,
		    \Vert v \Vert = 1,
		    \ \left| \left\langle v, \vec{1} \right\rangle \right| \leq \frac{2}{p} \frac{\log \left( n \right)}{\sqrt{n}},
		    \ \left| v \left( i \right) \right| \leq \frac{\delta}{\sqrt{n}} \ \ \ \forall i \in W^C,
		    \ \Vert v|_{S\left( v \right)} \Vert_2 \ge \frac{1}{10}
 	    \right\}.
    \]
    Let $\vec{1}_D := \mathrm{diag} ( \sqrt{d_1}, \dots, \sqrt{d_n} ) / \sqrt{\sum_{j} d_j}$ and recall that $v_1 ( \wh{A} ) = \vec{1}_D$.
    Note that if $G \in \Typ$ then by \prettyref{thm:typical-props}, part \ref{pr:evec_diff}, we have
    \[
      \left| \left \langle v_2, \vec{1} \right \rangle \right|
      \le \left| \left \langle v_2, \vec{1} - \vec{1}_D \right \rangle \right|
      + \left| \left \langle v_2, \vec{1}_D \right \rangle \right|
      \le \Vert v_2 \Vert \Vert \vec{1} - \vec{1}_D \Vert + 0
      \le \frac{2}{p} \frac{\log \left( n \right)}{\sqrt{n}}.
    \]
    Furthermore, by Properties~\ref{pr:ev2-bound}~and~\ref{pr:large-entries} of Theorem~\ref{thm:typical-props}, we have that if $G \in \Typ$ then $\Vert v_2|_{S\left( v_2 \right)} \Vert_2 \ge \frac{1}{10}$.
    Thus if $G \in \Typ$ then
    \begin{equation} \label{eq:omega2}
    \left|v_2 \left( i \right) \right| \leq \delta / \sqrt{n}, ~ \forall i \in W^C \Rightarrow v_2 \in \Omega_W.
    \end{equation}

    \medskip

    \textbf{Step 2:} In this step we construct a net $\Lambda_W$ over
    $\Omega_W$ for candidate eigenvectors. Again, this is a net with
    resolution $R = \delta / \sqrt{n}$. However, the construction is a bit more
    involved than in the unnormalized case, because we have to overcome the lack of
    an analogue of Theorem~\ref{thm:infty_bound} (the bound on
    $\|v\|_{\infty}$).
    We define
    \begin{multline*}
	\Lambda_W
	:= \left\{ x \in \R^n \, \middle| \,
	    x = R \cdot k,
	    \ k \in \Z^{n},
	    \ k_i = 0\ \ \ \forall 	i \in W^C,
	    \ \left| \left \langle x, \vec{1} \right \rangle \right| \leq \frac{3}{p} \frac{\log \left( n \right)}{\sqrt{n}},
	    \right. \\
	    \ \Vert x \Vert \in \left[ 1 - 2 \delta, 1 + 2 \delta \right],
	    \ \Vert x|_{S' \left( x \right)} \Vert_2 \geq \frac{1}{20}
		\Bigg\}.
    \end{multline*}
    We claim that this net has the following property:
    \begin{equation} \label{eq:netprop2}
	\forall v \in \Omega_W, ~ \exists x \in \Lambda_W \mbox{ such that } u := v - x \in \left[ - \tfrac{4 \delta}{\sqrt{n}}, \tfrac{4 \delta}{\sqrt{n}} \right]^n.
    \end{equation}

    To see this, given $v \in \Omega_W$, first define $x' \in \R^n$ by setting
    $x' \left( i \right) = 0$ for $i \in W^C$, and $x' \left( i \right) = v \left( i \right)$ for $i \in W$.
    Since $\left| v \left( i \right) \right| \le \delta / \sqrt{n}$ for all $i \in W^C$, we have
    \[
      \left| \left\langle x', \vec{1} \right \rangle - \left\langle v, \vec{1} \right \rangle \right| \leq  (1 - \eps) \delta.
    \]
    So the inner product $\left\langle x', \vec{1} \right \rangle$ might have large magnitude,
    but this can be ``corrected for'' and made close to zero by changing the coordinates of $x'$ in $W$ by at most $\tfrac{ (1-\eps) \delta}{\eps \sqrt{n}}$ each.
    That is, we can find $x'' \in \R^n$ such that $x'' \left( i \right) = 0$ for all $i \in W^C$,
    $\left| x'' \left( i \right) - x' \left( i \right) \right| \le \tfrac{\left( 1 - \eps \right) \delta}{\eps \sqrt{n}}$ for all $i \in W$, and
    $\left| \left \langle x'', \vec{1} \right \rangle \right| \leq \tfrac{2}{p} \tfrac{\log\left( n \right)}{\sqrt{n}}$.
    Now $x'' \left( i \right) / R$ might not be an integer for $i \in W$,
    but by changing each coordinate by at most $\delta / \sqrt{n}$, this can be achieved.
    Moreover, this can be done in such a way (by alternating the sign of the change in the coordinates)
    that the inner product of this vector with $\vec{1}$ changes by at most $\delta / n$.
    That is, we can find $x \in \R^n$ such that $x \left( i \right) = 0$ for all $i \in W^C$,
    $\left| x \left( i \right) - x'' \left( i \right) \right| \leq \tfrac{\delta}{\sqrt{n}}$ for all $i \in W$,
    $x \left( i \right) / R \in \Z$, and
    $\left| \left \langle x, \vec{1} \right \rangle - \left \langle x'' , \vec{1} \right \rangle \right| \leq \delta / n$.
    Consequently we must have
    $\left| \left \langle x, \vec{1} \right \rangle \right| \leq \tfrac{2}{p} \tfrac{\log\left( n \right)}{\sqrt{n}} + \delta / n \leq \tfrac{3}{p} \tfrac{\log\left( n \right)}{\sqrt{n}}$.
    By construction, we must have $\left\| x \right\| \in \left[ 1 - 2 \delta / \sqrt{\epsilon}, 1 + 2 \delta / \sqrt{\epsilon} \right]$.
    Finally, since $\left| x \left( i \right) - v \left( i \right) \right| \leq \tfrac{\delta}{\eps \sqrt{n}} \leq \tfrac{\log \left( n \right)}{n^{1/8}}$ for all $i \in W$,
    we have
    $\left\| x|_{S' \left( x \right)} \right\| \geq \left\| v|_{S \left( v \right)} \right\| - 2 \delta / \sqrt{\eps} \geq 1/10 - 4 \delta \geq 1/20$.
    Thus we have $x \in \Lambda_W$ and also $v - x \in \left[ - \tfrac{\delta}{\eps \sqrt{n}}, \tfrac{\delta}{\eps \sqrt{n}} \right]^n$.
    By the assumption $\epsilon > 1/4$ we get \eqref{eq:netprop2}.

    \medskip

	\textbf{Step 3:} Our next goal is to bound the cardinality of $\Lambda_W$. First note that $\Lambda_W$ is contained in the ball of radius $1 + 2 \delta$ in $\R^{\eps n}$. If we cover $\Lambda_W$ with hypercubes of edgelength $R$ (i.e., each point in $\Lambda_W$ is covered by at least one vertex of such a hypercube),
    then the union of these hypercubes will be contained in the ball of radius $2$ in $\R^{\eps n}$.
    Note that each such hypercube has $2^{\eps n}$ vertices.
    Recall that the Euclidean ball of radius $r$ in $\R^d$ has volume $V_d \left( r \right) = \tfrac{\pi^{d/2}}{\Gamma \left( d/2 + 1 \right)} r^d$,
    which by Stirling's approximation is at most $\left( 2 \pi e / d \right)^{d/2} r^d$.
    Consequently we have the following bound on the cardinality of $\Lambda_W$:
    \begin{equation} \label{eq:net-size2}
    \left| \Lambda_W \right|
    	\leq
    	2^{\eps n} \frac{V_{\eps n} \left( 2 \right) }{R^{\eps n}}
    	\leq
    	2^{\eps n} \frac{ \left( 8 \pi e / \left( \eps n \right) \right)^{\eps n / 2} }{\left( \delta / \sqrt{n} \right)^{\eps n}}
    	\leq
    	\left( \frac{40}{\delta} \right)^{\eps n}.
    \end{equation}
	We use this estimate later when we take a union bound over points in $\Lambda_W$.

    \medskip

	\textbf{Step 4:} For a vector $v \in \R^n$, define the event
    \[
	F_{W,v,\wh{A}}
	:= \left\{
	    Q_{W^C} P_{W^C} \wh{A} v \in B \left( 0, \delta  \log \left( n \right) / \sqrt{n} \right)
	\right\},
    \]
    and for a point $x \in \Lambda_W$ define
    \[
      H_{W, x, \wh{A}}
	:= \left\{
	    Q_{W^C} P_{W^C} \wh{A} x \in B \left( 0, \frac{12}{p} \frac{\delta \log\left( n \right)}{\sqrt{n}} \right)
	\right\},
    \]
    where again
    $P_{W^C}$ is the projection onto the coordinates of $W^C$, and $Q_{W^C}$ is the orthogonal projection onto the space orthogonal to $\vec{1}_{W^C}$.
    For $G \in \Typ$, we now prove that
    \begin{equation} \label{eq:impl1}
    v_2 \in \Omega_W \Rightarrow F_{W, v_2, \wh{A}} \mbox{ holds.}
    \end{equation}
    Moreover, we shall also see that
    \begin{equation} \label{eq:impl2}
    v_2 \in \Omega_W \mbox{ and } F_{W, v_2, \wh{A}} \mbox{ holds} \Rightarrow \exists x \in \Lambda_W \mbox { such that } H_{W, x, \wh{A}} \mbox{ holds.}
    \end{equation}

    Let us prove the implication \eqref{eq:impl1}. By definition,
    $\left( \wh{A} v_2 \right) \left( i \right) = \lambda_2 v_2 \left( i \right)$
    for all coordinates $i \in \left[ n \right]$.
    Since $G \in \Typ$, $\left| \lambda_2 \right| \leq 8 / \sqrt{np}$.
    Since $v_2 \in \Omega_W$, $\left| v_2 \left( i \right) \right| \leq \delta / \sqrt{n}$ for all $i \in W^C$,
    and so $\left| \left( P_{W^C} \wh{A} v_2 \right) \left( i \right) \right|
    \leq 8 \delta / (n \sqrt{p})$ for all $i \in W^C$.
    Therefore
    $P_{W^C} \wh{A} v_2 \in B \left( 0, 8 \delta / \sqrt{np} \right) \subseteq B \left( 0, \delta \log \left( n \right) / \sqrt{n} \right)$.
    Since $\left\| Q_{W^C} \right\| = 1$, it follows that the event $F_{W, v_2, \wh{A}}$ holds.

    Let us now prove \eqref{eq:impl2}. For $v \in \Omega_W$, let $x \in \Lambda_W$ be the closest point in $\Lambda_W$ such that
    $u := v - x \in \left[ - \tfrac{4 \delta}{\sqrt{n}}, \tfrac{4 \delta}{\sqrt{n}} \right]^n$;
    as we discussed above, such an $x \in \Lambda_W$ exists.
    Then
    \[
      \left\| Q_{W^C} P_{W^C} \wh{A} x \right\|
      \leq
      \left\| Q_{W^C} P_{W^C} \wh{A} v \right\|
      +
      \left\| Q_{W^C} P_{W^C} \wh{A} u \right\|
      \leq
      \left\| Q_{W^C} P_{W^C} \wh{A} v \right\|
      + \frac{8 \delta \log \left( n \right)}{p \sqrt{n}},
    \]
    where the first inequality follows from the triangle inequality,
    and the second inequality follows from the Cauchy-Schwarz inequality,
    the fact that $\left\| u \right\| \leq 4 \delta$, and \prettyref{thm:typical-props}, part \ref{pr:mat-proj}.
    Therefore, if $G \in \Typ$, $v_2 \in \Omega_W$, and the event $F_{W, v_2, \wh{A}}$ holds,
    then there exists $x \in \Lambda_W$ such that the event $H_{W, x, \wh{A}}$ holds.

    \medskip

	\textbf{Step 5:} Fix $x \in \Lambda_W$. Our next goal is to prove the bound
	\begin{equation} \label{eq:boundH}
	\PT \left[ H_{W,x,\wh{A}} \right] \leq 2 n^6 \left( \frac{C_2}{\sqrt{p(1-p)}} \delta \left( \log \left( n \right) \right)^{3/2}  \right)^{\left( 1 - \eps \right) n}
	\end{equation}
	for a universal constant $C_2 > 0$.

    We do this by coupling $P_{W^C} \wh{A} x$ with a vector
    whose nonzero entries are independent sums of scaled independent Bernoulli random variables,
    in order to shed the correlations introduced by the degrees in the normalization of $\wh{A}$.
    We can write $\wh{A} = \tfrac{1}{np} A + \tilde{A}$, where $\tilde{A}$ is a correction matrix.
    Then by the triangle inequality and \prettyref{thm:typical-props}, part \ref{pr:subtracting-mean}, we have
    \[
	\frac{1}{np} \left\| Q_{W^C} P_{W^C} A x \right\|
	\leq
	\left\| Q_{W^C} P_{W^C} \wh{A} x \right\|
	+ \left\| Q_{W^C} P_{W^C} \tilde{A} x \right\|
	\leq
	\left\| Q_{W^C} P_{W^C} \wh{A} x \right\|
	+ 4 p^{-7/2} \frac{\left( \log \left( n \right) \right)^2}{n},
    \]
    where in the application of \prettyref{thm:typical-props}, part \ref{pr:subtracting-mean} we used that
    $x \in \Lambda_W$ and so $\left| \left \langle x, \vec{1} \right \rangle \right| \leq \left( 3/p \right) \log \left( n \right) / \sqrt{n}$.
    So for $n$ large enough so that $p^{-5/2} \leq \frac{\delta \sqrt{n}}{\log n}$ holds, we have
    \begin{align*}
	\PT \left( H_{W, x, \wh{A}} \right)
	&\leq
	\PT \left( \frac{1}{np} Q_{W^C} P_{W^C} A x \in B \left( 0, \frac{16}{p} \frac{\delta \log \left( n \right)}{\sqrt{n}} \right) \right) \\
	&=
	\PT \left( Q_{W^C} P_{W^C} A x \in B \left( 0, 16 \delta \sqrt{n} \log \left( n \right) \right) \right) \\
	&\leq
	2 \P \left( Q_{W^C} P_{W^C} A x \in B \left( 0, 16 \delta \sqrt{n} \log \left( n \right) \right) \right),
    \end{align*}
    where in the last line we used the fact that $G \in \Typ$ with high probability.
    We are thus left with bounding this latter probability.

    Define $Y := P_{W^C} A x$.
    Note that $x_i = 0$ for all $i \in W^C$, so we can write $x = P_W x$, where $P_W$ is the coordinate projection onto $W$.
    We have $Y_i = 0$ for $i \in W$, while for $i \in W^C$ we have
    $Y_i = \sum_{j = 1}^n A_{ij} x_j = \sum_{j \in W} A_{ij} x_j$,
    that is, $Y_i$ is a sum of scaled independent Bernoulli random variables.
    In order to bound the small ball probability for $Y_i$ using the Littlewood-Offord-type estimate,
    we need to guarantee that $x$ has many entries with large enough magnitude.
    Contrary to the proof in \prettyref{sec:adj_union_bound}, we now do not have a bound on $\left\| x \right\|_{\infty}$,
    and so we cannot deduce this immediately.
    Instead, we use the fact that $\left\| x|_{S' \left( x \right)} \right\| \geq 1/20$.
    Recall that by definition this means that
    \[
      \sum_{i : \left| x_i \right| \leq \tfrac{2 \log \left( n \right)}{n^{1/8}} } \left| x_i \right|^2 \geq \left( 1/20 \right)^2,
    \]
    and since $\sum_{i : \left| x_i \right| \leq 1/n } \left| x_i \right|^2 \leq n \times \left( 1 / n^2 \right) = 1/n$, for large enough $n$ we have
    \[
      \sum_{i : \tfrac{1}{n} \leq \left| x_i \right| \leq \tfrac{2 \log \left( n \right)}{n^{1/8}} } \left| x_i \right|^2 \geq 10^{-3}.
    \]
    Define for each $\ell \in \Z$ the set of indices
    $S_{\ell} := \left\{ i \in \left[ n \right] : \left| x_i \right| \in \big[ 2^{- \left( \ell + 1 \right)}, 2^{- \ell} \big) \right\}$.
    Then by the above we have
    \[
     \sum_{\ell = \tfrac{1}{10} \log \left( n \right)}^{\log \left( n \right)} \left| S_{\ell} \right| 4^{-\ell} \geq 10^{-3},
    \]
    and so there must exist an integer $\ell^* \in \left[ \tfrac{1}{10} \log \left( n \right), \log \left( n \right) \right]$
    such that $\left| S_{\ell^*} \right| 4^{- \ell^*} \geq 10^{-3} / \log \left( n \right)$.
    We can now apply \prettyref{lem:interval_prob} to the random variable $2^{\ell^* + 1} Y_i$
    with $m = \tfrac{4^{\ell^*}}{10^3 \log \left( n \right)}$ and $r = 2^{\ell^*+1} \times 16 \delta \log \left( n \right)$
    to get that
    \begin{equation*}
	\conc \left( Y_i, 16 \delta \log \left( n \right) \right)
	= \conc \left( 2^{\ell^* + 1} Y_i, 2^{\ell^* + 1} 16 \delta \log \left( n \right) \right)
	\leq \frac{C_1}{\sqrt{p(1-p)}} \delta \left( \log \left( n \right) \right)^{3/2}
    \end{equation*}
    for some universal constant $C_1>0$.

    Furthermore, the random variables $\left\{ Y_i \right\}_{W^C}$ are independent,
    as we have already argued in \prettyref{sec:adj_union_bound}.
    Thus, we can apply \prettyref{lem:RV} to $Y$, with
    $t = 16 \delta \log \left( n \right)$,
    $d = \left( 1 - \eps \right) n$,
    $K=n^4$,
    $q = \frac{C_1}{\sqrt{p(1-p)}} \delta \left( \log \left( n \right) \right)^{3/2} $, and
    $P = Q_{W^C}$, to get
    \begin{align*}
      \P \left( Q_{W^C} P_{W^C} A x \in B \left( 0, 16 \delta \sqrt{n} \log \left( n \right) \right) \right)
      &\leq \conc \left( Q_{W^C} Y, 16 \delta \sqrt{n} \log \left( n \right) \right) \\
      &\leq \left( \frac{C_2}{\sqrt{p(1-p)}} \delta \left( \log \left( n \right) \right)^{3/2}  \right)^{\left( 1 - \eps \right) n} \times n^6
    \end{align*}
    for some universal constant $C_2>0$. Thus the bound \eqref{eq:boundH} is proven.

    \medskip

	\textbf{Step 6:}
    Finally, we take a union bound to arrive at our result:
    \begin{multline*}
      \PT \left( \left| v_2 \left( i \right) \right| \leq \frac{\delta}{\sqrt{n}} \ \text{ for all } \  i \in W^C \right) \\
	\begin{aligned}
	    & \stackrel{\eqref{eq:impl1} }{\leq} \PT \left( F_{W, v_2, \wh{A}} \right)
	    \stackrel{\eqref{eq:impl2} }{\leq} \PT \left( \cup_{x \in \Lambda_W} H_{W, x, \wh{A}} \right)
	    \leq \left| \Lambda_W \right| \max_{x \in \Lambda_W} \PT \left( H_{W, x, \wh{A}} \right) \\
	    & \stackrel{\eqref{eq:net-size2} \wedge \eqref{eq:boundH} }{\leq}  2 n^6 \left( \frac{40}{\delta} \right)^{\eps n} \times  \left( \frac{C_2}{\sqrt{p(1-p)}} \delta \left( \log \left( n \right) \right)^{3/2}  \right)^{\left( 1 - \eps \right) n} \\
	    &\leq  \left( C_p' \left( \log \left( n \right) \right)^{3/2} \right)^n  \delta^{\left( 1 - 2 \eps \right) n}
	\end{aligned}
    \end{multline*}
    for some constant $C_p'$ depending only on $p$.
\end{proof}

\medskip

Using this lemma we now prove \prettyref{thm:deloc}.

\medskip

\begin{proof}
    Fix a constant $\eta > 0$, and let $\eps = 1/2 - \eta$.
    We apply \prettyref{lem:fixed-W},
    and take a union bound over the possible subsets $W\subseteq\left[n\right]$
    (of which there are at most $2^n$).
    The lemma thus tells us that there exists a constant $C$
    such that,
    conditioned on $G \in \Typ$,
    the probability that there exists a subset $W \subseteq \left[n \right]$ of size $\eps n$
    such that $\left| v_2 \left( i \right) \right| \leq \delta / \sqrt{n}$ for all $i \in W^C$
    is at most
    \[
      \left( C \log \left( n \right) \right)^{2n} \times \delta^{2\eta n}.
    \]
    Now choosing $\delta := \left( \log \left( n \right) \right)^{-2/\eta}$, we get that
    conditioned on $G \in \Typ$,
    the probability that there are not at least $\left( 1/2 - \eta \right) n$ entries of each eigenvector of $\wh{A}$ of magnitude at least
    \[
      \frac{1}{\sqrt{n} \left( \log \left( n \right) \right)^{2/\eta}}
    \]
    is at most $\left( C / \log \left( n \right) \right)^{2n}$.
    Using Theorem \ref{thm:typical} we know that $G \in \Typ$ asymptotically almost surely, and we are done.
\end{proof}



\section*{Acknowledgements}

We are grateful to Fan Chung for the question and for a useful discussion. 
This project was initiated at the Simons Institute for the Theory of Computing at UC Berkeley during the Algorithmic Spectral Graph Theory semester in Fall 2014, and we thank the Simons Institute for its hospitality. 
M.Z.R.\ gratefully acknowledges support from NSF grant DMS 1106999. 
T.S.\ gratefully acknowledges support from an NSF graduate research
fellowship, grant no.\ DGE 1106400.


\bibliographystyle{abbrv}


\appendix
\renewcommand{\thesection}{\Alph{section}}

\section{A formula for the Dirichlet form}
\label{app:calc}
The following calculation to simplify the expression for $f^T \nL_{G_+} f$ 
in the proof of Lemma~\ref{lem:adding_edge} 
is straightforward but slightly cumbersome. We have:
\begin{align*}
	f^T \nL_{G_+} f
	&= \sum_{\substack{\left\{i,j\right\} \in E\\ \left\{i,j\right\} \cap \left\{ u,v \right\} = \emptyset}} \left(\frac{1}{\sqrt{d_i}}f(i) -\frac{1}{\sqrt{d_j}}f(j)\right)^2
	+ \sum_{\substack{j\sim u \\ j \neq v}} \left(\frac{1}{\sqrt{d_u+1}}f(u) -\frac{1}{\sqrt{d_j}}f(j)\right)^2\\
	&\quad + \sum_{\substack{i\sim v \\ i \neq u}} \left(\frac{1}{\sqrt{d_v+1}}f(v) -\frac{1}{\sqrt{d_i}}f(i)\right)^2 + \left(\frac{1}{\sqrt{d_u+1}}f(u) -\frac{1}{\sqrt{d_v + 1}}f(v)\right)^2\\
	&= f^T \nL_{G} f  \\
	&\quad + \sum_{\substack{j\sim u \\ j \neq v}} \left\{ \left(\frac{1}{\sqrt{d_u+1}}f(u) -\frac{1}{\sqrt{d_j}}f(j)\right)^2 - \left(\frac{1}{\sqrt{d_u}}f(u) -\frac{1}{\sqrt{d_j}}f(j)\right)^2 \right\} \\
 &\quad + \sum_{\substack{i\sim v\\i\neq u}} \left\{ \left(\frac{1}{\sqrt{d_v +1}} f(v)-\frac{1}{\sqrt{d_i}}f(i)\right)^2
	- \left(\frac{1}{\sqrt{d_v}} f(v)-\frac{1}{\sqrt{d_i}}f(i)\right)^2 \right\}
	 \\
	 &\quad + \left(\frac{1}{\sqrt{d_u + 1}}f(u) - \frac{1}{\sqrt{d_v + 1}}f(v)\right)^2\\
	 &= \lambda_2(\nL_G)
	  +  2\frac{\sqrt{d_u+1}-\sqrt{d_u}}{\sqrt{d_u + 1}} \sum_{\substack{j\sim u\\j\neq v}} \frac{1}{\sqrt{d_u d_j}}f(u)f(j) \\
	 &\quad + 2\frac{\sqrt{d_v+1} - \sqrt{d_v}}{\sqrt{d_v + 1}}\sum_{\substack{i\sim v\\i\neq u}}\frac{1}{\sqrt{d_i d_v}} f(v)f(i) - \frac{2f(u)f(v)}{\sqrt{(d_u+1)(d_v+1)}}.
\end{align*}
Since $f$ is the second eigenvector of $\nL_G$, we have for every vertex $i \in V$ that
\[
 \lambda_2 \left( \nL_G \right) f \left( i \right) = \left( \nL_G f \right) \left( i \right) = f \left( i \right) - \frac{1}{\sqrt{d_i}} \sum_{j \sim i} \frac{1}{\sqrt{d_j}} f\left( j \right),
\]
and so for every $i \in V$ we have 
\[
 \sum_{j \sim i} \frac{1}{\sqrt{d_i d_j}} f\left( i \right) f \left( j \right) = \left( 1 - \lambda_2 \left( \nL_G \right) \right) f\left( i \right)^2.
\]
Thus we arrive at the expression in~\eqref{eq:calc_final}.

\section{Additional properties of typical Erd\H{o}s-R\'enyi random graphs}
\label{app:typical}

In this appendix we prove \prettyref{thm:typical-props}, which states that several additional
spectral properties hold for typical instances of $G(n,p)$ (as defined in~\prettyref{def:typical}).
It is likely that these lemmas have been proven before and are well known; we include them here for
completeness.

We first prove that the top eigenvectors of $A$ and $\wh A$ are close to $\vec{1}$ for typical instances of $G(n,p)$.

\begin{lemma}\label{lem:normalized_top_ev}
    Let $n$ and $p$ be such that $p \in \left(\tfrac {10}{\sqrt{n}},1 \right)$. Let $G\in \cT_{n,p}$.
    Then the top eigenvectors $v_1(A)$ and $v_1(\wh A)$
    of the unnormalized and the symmetric normalized adjacency matrices
    are close to $\vec{1}$, in the following specific sense:
    \[
	\Vert v_1(A) - \vec{1} \Vert_2 \le 2 \frac{\log (n)}{\sqrt{p n}}, \qquad \text{and} \qquad
	\Vert v_1(\wh A) - \vec{1} \Vert_2 \le \frac{2}{p} \frac{\log
\left( n \right)}{\sqrt{n}},
    \]
    for all $n$ large enough.
\end{lemma}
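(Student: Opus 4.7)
The plan is to establish the two bounds by distinct approaches, both leveraging the precise estimates granted by $G \in \Typ$.

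For the bound on $\|v_1(A) - \vec{1}\|_2$, no closed form for $v_1(A)$ is available, so I would use a spectral-gap argument. Expand $\vec{1} = \sum_i c_i u_i$ in an orthonormal eigenbasis $u_1,\dots,u_n$ of $A$ with corresponding eigenvalues $\lambda_1 \ge \dots \ge \lambda_n$, so that $\sum_i c_i^2 = 1$. Since $(A\vec{1})_i = d_i/\sqrt{n}$, Property~\ref{pr:degrees} gives
\[
\|A\vec{1} - np\vec{1}\|_2^2 = \frac{1}{n}\sum_i (d_i - np)^2 \le \log^2(n)\cdot np.
\]
On the other hand, $\|A\vec{1} - np\vec{1}\|_2^2 = \sum_i c_i^2(\lambda_i - np)^2 \ge \sum_{i \ge 2} c_i^2(\lambda_i - np)^2$, and Property~\ref{pr:eval} gives $|\lambda_i - np| \ge np - 3\sqrt{np(1-p)} \ge np/2$ for all $i \ge 2$ once $np \ge 36$ (which is ensured by $p > 10/\sqrt{n}$ and $n$ large). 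Hence $(1 - c_1^2)(np)^2/4 \le \log^2(n)\cdot np$, so $1 - c_1^2 \le 4\log^2(n)/(np)$. Choosing the sign of $u_1$ so that $c_1 \ge 0$ gives $\|v_1(A) - \vec{1}\|_2^2 = 2(1 - c_1) = 2(1-c_1^2)/(1+c_1) \le 4\log^2(n)/(np)$ for $n$ large (since $c_1 \to 1$), from which the claimed bound follows by taking square roots.

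For the bound on $\|v_1(\wh A) - \vec{1}\|_2$, I would instead use the explicit formula $v_1(\wh A) = D^{1/2}\mathds{1}/\|D^{1/2}\mathds{1}\|_2$, whose $i$-th coordinate is $\sqrt{d_i/S}$ with $S := \sum_j d_j$. Expanding the squared difference coordinate-wise gives
\[
\|v_1(\wh A) - \vec{1}\|_2^2 = 2 - \frac{2}{\sqrt{nS}}\sum_{i=1}^n \sqrt{d_i}.
\]
Writing $d_i = np + e_i$ with $|e_i| \le \log(n)\sqrt{np}$ and $|\sum_i e_i| \le n\log(n)$ by Property~\ref{pr:degrees}, a first-order Taylor expansion of $\sqrt{\cdot}$ around $np$ (valid since $|e_i|/(np) = O(\log(n)/\sqrt{np}) \to 0$ under the hypothesis $p > 10/\sqrt{n}$) yields $\sum_i \sqrt{d_i} = n\sqrt{np}\left(1 - O(\log^2(n)/(np))\right)$ and $\sqrt{nS} = n\sqrt{np}\left(1 + O(\log(n)/(np))\right)$. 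Substituting gives $\|v_1(\wh A) - \vec{1}\|_2^2 = O(\log^2(n)/(np))$, which is stronger than the stated bound $(2/p)\log(n)/\sqrt{n}$ and therefore implies it for $n$ large enough.

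The main obstacle in both cases is bookkeeping of constants rather than any substantive idea: the arguments reduce to ensuring that the leading factors of $2$ can be attained for $n$ sufficiently large, which follows from $c_1 \to 1$ in the adjacency case and from careful control of the lower-order terms in the Taylor expansion in the normalized case.
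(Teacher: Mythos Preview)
Your proposal is correct. For $v_1(\wh A)$ you and the paper both exploit the explicit formula $v_1(\wh A)(i)=\sqrt{d_i}/\sqrt{\sum_j d_j}$ together with the degree estimates from $\Typ$; the only difference is packaging (the paper bounds each coordinate $|v_1(\wh A)(i)-1/\sqrt{n}|$ directly and then sums, while you Taylor-expand the aggregate $2-\tfrac{2}{\sqrt{nS}}\sum_i\sqrt{d_i}$), and both routes yield the same order $O(\log(n)/\sqrt{np})$, comfortably inside the stated bound.

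For $v_1(A)$ your route is genuinely different and in fact more self-contained: the paper simply cites Mitra's entrywise bounds~\cite{mitra2009entrywise} and says the conclusion follows, whereas you give a direct Davis--Kahan-style argument using only the data in $\Typ$ (the degree bounds to control $\|A\vec{1}-np\,\vec{1}\|_2$ and the eigenvalue bounds to separate $\lambda_1$ from the rest of the spectrum). Your argument has the advantage of avoiding an external reference and of making transparent exactly which properties of $\Typ$ are used; the paper's citation buys a stronger entrywise conclusion if one ever needs it. One small remark on your constants: with $|\lambda_i-np|\ge np/2$ you get $1-c_1^2\le 4\log^2(n)/(np)$ and hence $2(1-c_1)\le 8\log^2(n)/((1+c_1)np)$, which does not literally drop below $4\log^2(n)/(np)$; to recover the exact factor $2$ in the statement you should instead use $|\lambda_i-np|\ge (1-o(1))np$ for $i\ge 2$ (valid since $3\sqrt{np(1-p)}=o(np)$ under the hypothesis), which gives $1-c_1^2\le (1+o(1))\log^2(n)/(np)$ and then $\|v_1(A)-\vec{1}\|_2\le (1+o(1))\log(n)/\sqrt{np}$.
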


\begin{proof}
    The top eigenvector $v_1 ( \wh A )$ of $\wh A$ is explicitly known, and so the corresponding statement is simple to prove.
    We know that for a node $i$, $v_1(\wh A) \left( i \right) =
    \sqrt{d_i} / \sqrt{\sum_{j} d_j}$, while $\vec{1} \left( i \right) =
    1/ \sqrt{n}$. Since $G \in \cT_{n,p}$, we have that for all $i$, $np - \log(n) \sqrt{n} \leq d_i \leq  np + \log(n) \sqrt{n}$, and that $\sum_j d_j \in
\left[ n^2 p - \log (n) n, n^2 p + \log (n) n \right]$.  Using these estimates,
we have that for all $i$,
\[
    \left| v_1(\wh A) \left( i \right) - \vec{1} \left( i \right) \right|
    \leq \frac{2}{p} \frac{\log \left( n \right) }{n}.
\]
This then directly implies that $\Vert v_1(\wh A) - \vec{1} \Vert_2 \leq
\frac{2}{p} \frac{\log \left( n \right) }{\sqrt{n}}$.

The top eigenvector $v_1 ( A )$ of the unnormalized adjacency matrix $A$ does not have an explicit formula. However, Mitra~\cite{mitra2009entrywise} proved nearly optimal entrywise bounds for $v_1 (A)$,
from which the desired result follows.
\end{proof}

The following lemma is an extension of the result above, showing that
projecting $A$ and $\wh A$ onto the space orthogonal to $\vec{1}$ yields a
matrix with smaller norm.

\begin{lemma} \label{lem:proj_norm}
    Let $n \in \mathbb{N}$. Let $S \subseteq \left[n \right]$, let $P_S$ denote the coordinate projection onto $S$, and let $Q_S$ denote the orthogonal projection onto the space orthogonal to $\vec{1}_S$.
    Let $p \in \left( \tfrac{10}{\sqrt{n}}, 1 \right)$, and let $G\in\cT_{n,p}$.
    Then we have
    \[
	\Vert Q_S P_{S} \wh A \Vert_2 \leq \frac{2}{p} \frac{\log \left( n \right)}{\sqrt{n}},
	\qquad \text{and} \qquad
	\Vert Q_S P_{S} A \Vert_2 \leq 2\sqrt{\frac{n}{p}} \log \left( n \right)
    \]
    for all $n$ large enough.
\end{lemma}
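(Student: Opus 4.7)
The key identity I will exploit is $Q_S P_S \vec{1} = 0$: since $P_S \vec{1}$ equals $1/\sqrt{n}$ on $S$ and $0$ elsewhere, it is a scalar multiple of $\vec{1}_S$ and is hence annihilated by $Q_S$. Consequently, for any scalar $c$ and any vector $x$, $(c\,\vec{1}\vec{1}^T) x$ is a multiple of $\vec{1}$, so $Q_S P_S (c\,\vec{1}\vec{1}^T) = 0$ as an operator. Therefore, for any symmetric matrix $M$ and any $c \in \R$,
\[
\Vert Q_S P_S M \Vert_2 \;=\; \Vert Q_S P_S (M - c\,\vec{1}\vec{1}^T) \Vert_2 \;\le\; \Vert M - c\,\vec{1}\vec{1}^T\Vert_2,
\]
since $P_S$ and $Q_S$ are orthogonal projections. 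The task therefore reduces to choosing $c$ to strip off the contribution of the top eigenvector---which is close to $\vec{1}$---and bounding the resulting rank-one perturbation.

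For $M = \wh{A}$ I take $c = 1$, matching $\lambda_1(\wh A)$. Using the spectral decomposition $\wh A = v_1 v_1^T + \sum_{i\ge 2} \lambda_i(\wh A)\,v_i v_i^T$ and writing $w := v_1(\wh A) - \vec{1}$, I have
\[
\wh A - \vec{1}\vec{1}^T \;=\; \bigl(\vec{1} w^T + w \vec{1}^T + w w^T\bigr) \;+\; \sum_{i \ge 2} \lambda_i(\wh A)\, v_i v_i^T .
\]
The first parenthesis has operator norm at most $2\Vert w\Vert_2 + \Vert w \Vert_2^2$ by the triangle inequality, where Lemma \ref{lem:normalized_top_ev} gives $\Vert w\Vert_2 \le (2/p)\log(n)/\sqrt n$. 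The tail is bounded in operator norm by $\max_{i \ge 2} |\lambda_i(\wh A)| \le 8/\sqrt{np}$ by property \ref{pr:eval} of $\mathcal{T}_{n,p}$. For $n$ large enough (depending on $p$), both the $\Vert w\Vert_2^2$ correction and the $8/\sqrt{np}$ tail are lower order than $\log(n)/(p\sqrt n)$, and the claimed bound follows; the slack in the constant is easily absorbed (indeed a direct entrywise computation of $v_1(\wh A)(i) = \sqrt{d_i/\sum_j d_j}$ via typicality yields the sharper estimate $\Vert w\Vert_2 \le (1+o(1))\log(n)/(2 p \sqrt n)$, comfortably within the target).

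For $M = A$ I use the same strategy with $c = \lambda_1(A)$, which by property \ref{pr:eval} satisfies $\lambda_1(A) \le np + \sqrt n \log n \le 2np$ for $n$ large. Writing $w' := v_1(A) - \vec{1}$, the decomposition
\[
A - \lambda_1(A)\vec{1}\vec{1}^T \;=\; \lambda_1(A)\bigl(v_1(A) v_1(A)^T - \vec{1}\vec{1}^T\bigr) \;+\; \sum_{i \ge 2} \lambda_i(A)\, v_i(A) v_i(A)^T
\]
gives $\Vert A - \lambda_1(A)\vec{1}\vec{1}^T\Vert_2 \le \lambda_1(A)\bigl(2\Vert w'\Vert_2 + \Vert w'\Vert_2^2\bigr) + 3\sqrt{np(1-p)}$; by Lemma \ref{lem:normalized_top_ev} we have $\Vert w'\Vert_2 \le 2\log(n)/\sqrt{pn}$, so the right-hand side is $O(\sqrt{np}\,\log n)$. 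Since $p \le 1$ we have $\sqrt{np} \le \sqrt{n/p}$, so this fits within $2\sqrt{n/p}\,\log n$ for all sufficiently large $n$.

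There is no serious conceptual obstacle. The ``hard part,'' such as it is, lies entirely in constant-tracking: verifying that the rank-one-correction error, the tail-eigenvalue error, and the quadratic-in-$\Vert w\Vert_2$ error can all be simultaneously absorbed by the leading $\log(n)/(p\sqrt n)$ (respectively $\sqrt{n/p}\,\log n$) term for $n$ large enough, using the typicality bounds of Definition \ref{def:typical} and the closeness of the top eigenvectors to $\vec{1}$ from Lemma \ref{lem:normalized_top_ev}.
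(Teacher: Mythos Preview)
Your proof is correct and essentially the same as the paper's: both hinge on the identity $Q_S P_S \vec{1} = 0$, the closeness of the top eigenvector to $\vec{1}$ from Lemma~\ref{lem:normalized_top_ev}, and the typicality bounds on $\lambda_i$ for $i \ge 2$. The only difference is organizational---you subtract the rank-one operator $c\,\vec{1}\vec{1}^T$ and bound $\Vert M - c\,\vec{1}\vec{1}^T\Vert_2$ directly, whereas the paper decomposes each input unit vector as $x = \alpha\, v_1 + \sqrt{1-\alpha^2}\, y$ with $y \perp v_1$ and bounds the two pieces separately.
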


\begin{proof}
    The proofs for $A$ and $\wh A$ proceed identically; we give the
    proof only for $\wh A$.
    Recall that for a matrix $M$, $\left\| M \right\|_2 = \sup_{x: \left\|x \right\|_2 = 1} \left\| M x \right\|_2$.

    For any unit vector $x$ we can write $x = \alpha v_1 ( \wh A ) + \sqrt{1 - \alpha^2} y$ for some $\alpha \in \left[-1,1\right]$ and some unit vector $y$ that is orthogonal to $v_1 ( \wh A )$.
    By the triangle inequality we have that
    \[
	\left\| Q_S P_S \wh A x \right\|_2 \leq \left\| Q_S P_S \wh A v_1 ( \wh A ) \right\|_2 + \left\| Q_S P_S \wh A y \right\|_2,
    \]
    and we bound each term separately.
    For the second term, we can use the submultiplicativity of the norm and the bounds on the eigenvalues of $G \in \cT_{n,p}$:
    \[
	\left\| Q_S P_S \wh A y \right\| \leq \left\| Q_S \right\| \left\| P_S \right\| \left\| \wh A y \right\| \leq \left\| \wh A y \right\| \leq \max_{i \geq 2} \left| \lambda_i ( \wh A ) \right| \leq  \frac{8}{\sqrt{pn}}.
    \]
    For the first term, note that $\wh A v_1 (\wh A) = v_1 (\wh A)$, and we then have that
    \begin{align*}
	\left\| Q_S P_S v_1 (\wh A) \right\| &\leq \left\| Q_S P_S \vec{1} \right\| + \left\| Q_S P_S \left( v_1 (\wh A) - \vec{1} \right) \right\| \\
	&\leq 0 + \left\| Q_S \right\| \left\| P_S \right \| \left\| v_1 ( \wh A ) - \vec{1} \right\| \leq \frac{2}{p} \frac{\log \left( n \right)}{\sqrt{n}},
    \end{align*}
    where the last inequality follows from~\prettyref{lem:normalized_top_ev}.
\end{proof}

The next lemma shows that for any subset of coordinates $S \subseteq \left[n\right]$,
the matrices
$\tfrac{1}{np}Q_SP_{S}A$ and $Q_SP_{S}\wh A$ behave similarly.

\begin{lemma}\label{lem:normalization}
    Let $n \geq 10$. Let $S \subseteq \left[n \right]$, let $P_S$ denote the coordinate projection onto $S$, and let $Q_S$ denote the orthogonal projection onto the space orthogonal to $\vec{1}_S$.
    Let $p \in \left( \tfrac{\log n}{\sqrt{n}}, 1 \right)$ and let $G \in \cT_{n,p}$.
    Then for any unit vector $x \in \R^n$ with $|\langle x, \vec{1} \rangle| \le \alpha$, we have that
    \[
	\Vert Q_S P_{S} \wh A x - \tfrac{1}{np} Q_S P_{S} A x\Vert
	\le 6 p^{-5/2} \frac{\left( \log \left( n \right) \right)^2 + \alpha \sqrt{n} \log \left( n \right)}{n}.
    \]

\end{lemma}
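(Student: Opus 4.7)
The plan is to reduce $\wh A$ to a scalar multiple of $A$ via a small diagonal perturbation, then bound the three resulting error terms separately. I would introduce the diagonal matrix $F$ with entries $F_{ii} := \sqrt{np/d_i} - 1$, so that $D^{-1/2} = (I+F)/\sqrt{np}$. Using Property~\ref{pr:degrees} of $\Typ$ — specifically $|d_i - np| \le \log(n)\sqrt{np}$, which for $n$ large enough (ensured by $p > \log(n)/\sqrt n$) also forces $d_i \ge np/2$ — a first order estimate gives $\|F\|_{\mathrm{op}} \lesssim \log(n)/\sqrt{np}$. Expanding $D^{-1/2}AD^{-1/2}$ then yields the identity
\[
\wh A - \tfrac{1}{np} A \;=\; \tfrac{1}{np}\bigl(FA + AF + FAF\bigr),
\]
so applying $Q_SP_S$ and the triangle inequality splits the task into three norm bounds.

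Next, I would estimate each term by moving $Q_SP_S$ onto whichever factor gives the sharpest bound. Since $F$ and $P_S$ are both diagonal (hence commute) and $\|Q_SP_S\|\le 1$, the first term satisfies $\|Q_SP_SFAx\| \le \|F\|\cdot \|Ax\|$, and the third satisfies $\|Q_SP_SFAFx\|\le \|F\|^2\,\lambda_1(A)$. For the middle term I would instead keep $Q_SP_SA$ together and invoke Lemma~\ref{lem:proj_norm} to obtain
\[
\|Q_SP_SAFx\|\le \|Q_SP_SA\|\cdot \|F\| \;\lesssim\; \sqrt{n/p}\,\log(n) \cdot \log(n)/\sqrt{np} \;=\; (\log n)^2/p.
\]

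The only nontrivial ingredient will be the bound on $\|Ax\|$: since $x$ is allowed to have a sizable overlap $|\langle x,\vec 1\rangle|\le \alpha$ with the top eigendirection, a naive bound $\|Ax\|\le \lambda_1(A)\le 2np$ would be too lossy. I would instead decompose $x = \langle x,v_1(A)\rangle v_1(A) + x_\perp$. Using Lemma~\ref{lem:normalized_top_ev}, the overlap with $v_1(A)$ is controlled by $|\langle x,v_1(A)\rangle| \le \alpha + \|v_1(A)-\vec 1\| \lesssim \alpha + \log(n)/\sqrt{np}$. Combined with $\lambda_1(A)\le 2np$ and $|\lambda_i(A)|\le 3\sqrt{np(1-p)}$ for $i\ge 2$ from Property~\ref{pr:eval}, this yields $\|Ax\|^2 \lesssim (np\alpha)^2 + np(\log n)^2$, hence $\|Ax\|\lesssim np\,\alpha + \sqrt{np}\log(n)$.

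Finally, I would assemble the three estimates and multiply by the overall $1/(np)$ prefactor to obtain contributions of order $\alpha\log(n)/\sqrt{np} + (\log n)^2/(np)$ (first term), $(\log n)^2/(np^2)$ (middle term), and $(\log n)^2/(np)$ (third term). Since $p\le 1$ one has $1/\sqrt{np}\le 1/(p^{5/2}\sqrt n)$ and $1/(np^2)\le 1/(np^{5/2})$, so each of these is dominated by $p^{-5/2}\bigl[(\log n)^2 + \alpha\sqrt n\log(n)\bigr]/n$; a quick arithmetic check then confirms the constant $6$. The main obstacle is essentially bookkeeping: identifying that the middle term is what dictates the $p^{-5/2}$ dependence, and ensuring that the accumulated numerical constants from the three estimates add up to at most the claimed constant.
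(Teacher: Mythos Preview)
Your proposal is correct and follows essentially the same route as the paper: write $D^{-1/2}=(I+F)/\sqrt{np}$ (the paper uses $\tilde D = F/\sqrt{np}$), expand into three cross terms, bound the $FAF$ term by submultiplicativity, the $AF$ term via Lemma~\ref{lem:proj_norm}, and the $FA$ term by first controlling $\|Ax\|$ through the overlap of $x$ with $v_1(A)$ using Lemma~\ref{lem:normalized_top_ev}. The only cosmetic difference is that the paper decomposes $x$ first along $\vec{1}$ and then along $v_1(A)$, whereas you go directly to $v_1(A)$; the resulting estimates are the same.
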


\begin{proof}
    We can write
    \[
	\wh A = \left( \tfrac{1}{\sqrt{np}} I + \tilde D \right) A \left( \tfrac{1}{\sqrt{np}}I + \tilde D \right),
    \]
    where $\tilde D$ is a diagonal correction matrix with entries $\tilde D_{ii} = 1/\sqrt{d_i} - 1/\sqrt{np}$. Since $G \in \cT_{n,p}$, $np - \sqrt{n} \log \left( n \right) \leq d_i \leq np + \sqrt{n} \log \left( n \right)$ for all $i$, and thus it follows that $\left| \tilde D_{ii} \right| \leq \frac{1}{p^{3/2}} \frac{\log \left( n \right)}{n}$ for all $i$.
    By the
    triangle inequality we thus have that
    \[
	\Vert Q_S P_{S} \wh A x - \tfrac{1}{np} Q_S P_{S}A x \Vert
	\le
	\Vert Q_S P_{S}\tilde D A\tilde D x \Vert
	+ \tfrac{1}{\sqrt{np}}\Vert  Q_S P_{S}\tilde D A x \Vert
	+\tfrac{1}{\sqrt{np}}\Vert Q_S P_{S}A \tilde Dx \Vert,
    \]
    and we bound each term separately.

    The first term can be bounded simply by the submultiplicativity of the
    norm:
    \begin{align*}
	\Vert Q_S P_S \tilde D A \tilde D x \Vert
	&\le
	\Vert Q_S \Vert  \Vert P_{S}\Vert \Vert \tilde D \Vert
	\Vert A \Vert \Vert \tilde D \Vert \Vert x \Vert\\
	&\le 1 \times 1 \times \left( \frac{1}{p^{3/2}} \frac{\log \left( n \right)}{n} \right) \times \left( np + \sqrt{n} \log \left( n \right) \right) \times \left( \frac{1}{p^{3/2}} \frac{\log \left( n \right)}{n} \right) \times 1 \\
	&\le \frac{2}{p^2} \frac{\left( \log \left( n \right) \right)^2}{n},
    \end{align*}
    where the last inequality uses the fact that $p>\tfrac{\log n}{\sqrt n}$.

    Next we bound the third term, which can also be done by the submultiplicativity of the norm, together with \prettyref{lem:proj_norm}:
    \begin{align*}
	\tfrac{1}{\sqrt{np}} \Vert  Q_S P_{S} A  \tilde D x \Vert
	&\le \frac{1}{\sqrt{np}} \Vert Q_S P_{S} A  \Vert  \Vert \tilde D \Vert
	\Vert x \Vert \\
	&\le \frac{1}{\sqrt{np}} \left(\sqrt{\tfrac n p}\log \left( n \right) \right) \times \left( \frac{1}{p^{3/2}} \frac{\log \left( n \right)}{n} \right) \times 1 = p^{-5/2} \frac{ \left( \log \left( n \right) \right)^2}{n}.
    \end{align*}

    Finally, we bound the second term. By submultiplicativity again, we have that
    \[
     \tfrac{1}{\sqrt{np}} \Vert Q_S P_S \tilde{D} A x \Vert \leq \tfrac{1}{\sqrt{np}} \Vert Q_S \Vert \Vert P_S \Vert \Vert \tilde{D} \Vert \Vert A x \Vert \leq \frac{1}{p^2} \frac{\log \left( n \right)}{n^{3/2}} \Vert A x \Vert,
    \]
    so what remains is to bound $\Vert A x \Vert$. Let $\gamma := \left \langle x, \vec{1} \right \rangle$; by assumption $\left| \gamma \right| \leq \alpha$. We can then write $x = \gamma \vec{1} + \sqrt{1 - \gamma^2} z$ for some unit vector $z$ that is orthogonal to $\vec{1}$. Then we have that
    \[
     \Vert Ax \Vert = \left| \gamma \right| \Vert A \vec{1} \Vert + \sqrt{1 - \gamma^2} \Vert A z \Vert \leq \alpha \left( np + \sqrt{n} \log \left( n \right) \right) + \Vert A z \Vert \leq 2 \alpha np + \Vert A z \Vert,
    \]
    where the last inequality uses the fact that $p>\tfrac{\log n}{\sqrt n}$. So what remains is to bound $\Vert A z \Vert$.
    Let $\theta := \left\langle z, v_1 \left( A \right) \right \rangle$.
    By \prettyref{lem:normalized_top_ev} and the Cauchy-Schwarz inequality we have that
    \begin{align*}
     \left| \theta\right| =
     \left| \left \langle z, v_1 \left( A \right) \right \rangle \right|
     &= \left| \left \langle z, \vec{1} \right \rangle + \left \langle z, v_1 \left( A \right) - \vec{1} \right \rangle \right|
     = \left| 0 + \left \langle z, v_1 \left( A \right) - \vec{1} \right \rangle \right| \\
     &\leq \left\| z \right\| \left\| v_1 \left( A \right) - \vec{1} \right\| \leq 2 \frac{\log \left( n \right)}{\sqrt{pn}}.
    \end{align*}
    We can write $z = \theta v_1 \left( A \right) + \sqrt{1 - \theta^2} y$ for some unit vector $y$ that is orthogonal to $v_1 \left( A \right)$. Then using the triangle inequality we have that
    \[
      \left\| A z \right\| \leq \left| \theta \right| \left\| A \right\| + \left\| A y \right\| \leq \frac{2 \log \left( n \right)}{\sqrt{pn}} \left( n p + \sqrt{n} \log \left( n \right) \right) + 3 \sqrt{n p \left( 1 - p \right)} \leq 4 \sqrt{p n} \log \left( n \right),
    \]
    where the last inequality uses the fact that $p>\tfrac{\log n}{\sqrt n}$. Putting the previous displays together, we get that
    \[
     \tfrac{1}{\sqrt{np}} \Vert Q_S P_S \tilde{D} A x \Vert \leq \frac{2}{p} \frac{\log \left( n \right)}{n} \left( \alpha \sqrt{n} + \tfrac{2}{\sqrt p} \log \left( n \right) \right).
    \]
    The bounds on the three terms put together concludes the proof.
\end{proof}

\begin{lemma}\label{lem:ev2-bound}
    Fix $p \in (0,1)$ and let $n \geq 10$. Let $G \in\cT_{n,p}$. Then
    \[
	\lambda_2(\wh A) \ge \left( 1 - o \left( 1 \right) \right) \frac{1-p}{16\sqrt{np}}.
    \]
\end{lemma}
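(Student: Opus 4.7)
The strategy is a moment computation: I evaluate $\mathrm{tr}(\wh A)$ and $\mathrm{tr}(\wh A^2)$ and combine these with the upper bound $|\lambda_i(\wh A)| \le M := 8/\sqrt{np}$ for $i \ge 2$ (Property~\ref{pr:eval} of \prettyref{def:typical}) to force $\lambda_2(\wh A)$ to be positive and of the claimed order.

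First I would evaluate the two moments. Since $\wh A_{ii}=0$ for every $i$, $\mathrm{tr}(\wh A) = 0$, so $\sum_{i \ge 2} \lambda_i(\wh A) = -\lambda_1(\wh A) = -1$. For the second moment,
\[
\mathrm{tr}(\wh A^2) \;=\; \sum_{i,j} \frac{A_{ij}}{d_i d_j} \;=\; \sum_i \frac{1}{d_i} \sum_{j \sim i} \frac{1}{d_j};
\]
by Property~\ref{pr:degrees} of \prettyref{def:typical}, $d_i = np\,(1+o(1))$ uniformly in $i$, so the inner sum equals $(d_i/(np))(1+o(1))$ and $\mathrm{tr}(\wh A^2) = (1+o(1))/p$. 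Subtracting $\lambda_1(\wh A)^2 = 1$ gives
\[
\sum_{i \ge 2} \lambda_i(\wh A)^2 \;=\; \frac{1-p}{p} + o(1).
\]

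Next, I would split the $i \ge 2$ eigenvalues by sign: let $c := \lambda_2(\wh A)$, $P := \sum_{i \ge 2,\, \lambda_i > 0} \lambda_i$, and $N := -\sum_{i \ge 2,\, \lambda_i < 0} \lambda_i$, so the first-moment identity reads $N - P = 1$. Each positive $\lambda_i$ (with $i \ge 2$) is at most $c$, and each $|\lambda_i|$ (with $i \ge 2$) is at most $M$, hence $\sum_{i \ge 2} \lambda_i^2 \le cP + MN = (c+M)P + M$. Combining with the second-moment estimate gives $(c+M)\,P \ge \tfrac{1-p}{p}(1-o(1))$. The trivial bound $P \le (n-1)c$ (at most $n-1$ positive $\lambda_i$'s, each $\le c$) then yields
\[
c\,(c+M) \;\ge\; \frac{1-p}{(n-1)\,p}(1-o(1)) \;=\; \frac{1-p}{np}(1-o(1)).
\]

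I would conclude by a simple case split. If $c \ge M = 8/\sqrt{np}$ then $c \ge (1-p)/(16\sqrt{np})$ holds trivially, since $8 \ge (1-p)/16$ for every $p \in (0,1)$. Otherwise $0 < c < M$, so $c + M < 2M = 16/\sqrt{np}$, and plugging into the displayed inequality gives $16 c/\sqrt{np} > c\,(c+M) \ge \tfrac{1-p}{np}(1-o(1))$, i.e., $c \ge \tfrac{1-p}{16\sqrt{np}}(1-o(1))$. The only delicate point is the $(1+o(1))$ bookkeeping in the second-moment computation, which is immediate for fixed $p$ once Property~\ref{pr:degrees} is invoked; everything else is elementary.
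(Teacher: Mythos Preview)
Your proof is correct and follows essentially the same moment/trace argument as the paper: both compute $\mathrm{tr}(\wh A)=0$ and $\mathrm{tr}(\wh A^2)=(1+o(1))/p$, invoke the typicality bound $|\lambda_i|\le 8/\sqrt{np}$ for $i\ge 2$, split the $i\ge 2$ eigenvalues by sign, and bound the positive-eigenvalue sum $P$ by $n\lambda_2$. The only difference is bookkeeping: the paper uses $\lambda_i^2\le M|\lambda_i|$ uniformly to lower-bound $\sum_{i\ge 2}|\lambda_i|$ and then isolates $P$ via $2P=\sum|\lambda_i|+\sum\lambda_i$, whereas you bound the positive terms by $c$ and the negative terms by $M$, arriving at the quadratic $c(c+M)\ge\tfrac{1-p}{np}(1-o(1))$ that you resolve with a case split---a slightly more roundabout route to the same conclusion.
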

\begin{proof}
    For convenience, write $\lambda_i \equiv \lambda_i(\wh A)$.
    We know that $\sum_{i=1}^n \lambda_i = \Tr ( \wh{A} ) = 0$,
    because the diagonal entries of $\wh{A}$ are all zero.
   Since $\lambda_1(\wh A) = 1$, we then have
   \[
	\sum_{i> 1} \lambda_i = -1.
   \]
   If $G \in \cT_{n,p}$, then we have
   \[
       \sum_{i=1}^n \lambda_i^2
	= \Tr ( \wh{A}^2 )
       = \sum_{i,j \in [n]} \wh{A}_{ij}^2
       = 2 \sum_{\{u,v\} \in E(G)} \frac{1}{d_u d_v}
       \ge \frac{n^2 p - n \log \left( n \right) }{(np + \log \left( n \right) \cdot \sqrt{np})^2}
       = \frac{1}{p} \cdot (1 - o(1)).
   \]
   On the other hand, we have
    \[
     \sum_{i=2}^n \lambda_i^2
     \leq \max_{i > 1} \left| \lambda_i \right| \times \sum_{i=2}^n \left| \lambda_i \right|
     \leq \frac{8}{\sqrt{np}} \sum_{i=2}^n \left| \lambda_i \right|,
    \]
    where in the second inequality we used that $G \in \cT_{n,p}$. Putting together the two previous displays we get that
    \[
     \sum_{i=2}^n \left| \lambda_i \right|
     \geq \left( 1 - o \left( 1 \right) \right)
     \frac{\sqrt{n} \left( 1 - p \right)}{8 \sqrt{p}}.
    \]
    Let $k$ be such that $\lambda_2, \dots, \lambda_k \geq 0$ and $\lambda_{k+1}, \dots, \lambda_n < 0$. We then have
    \[
     2n \lambda_2 \geq 2 \sum_{i=2}^k \lambda_i = \sum_{i=2}^n \left| \lambda_i \right| + \sum_{i=2}^n \lambda_i
      \geq \left( 1 - o \left( 1 \right) \right)
     \frac{\sqrt{n} \left( 1 - p \right)}{8 \sqrt{p}}
      - 1
      = \left( 1 - o \left( 1 \right) \right)
     \frac{\sqrt{n} \left( 1 - p \right)}{8 \sqrt{p}},
    \]
    and dividing by $2n$ gives the claim.
\end{proof}

\begin{lemma} \label{lem:inf-norm}
    Fix $n\geq 10$ and $p \in (0,1)$. Let $G \in \cT_{n,p}$.
    Let $v$ be a unit eigenvector of $\wh A$ with eigenvalue $\lambda$, orthogonal to $v_1(\wh A)$.
    Fix $\alpha \in [0,1]$,
    and let $S = \{i \in [n] :  \vert v \left( i \right) \vert \le \alpha \}$.
    Then
    $\Vert P_S v \Vert_2 \ge \frac{1}{3} \left( \frac{|\lambda|}{\lambda_2} - \frac{\log
    (n)}{\alpha^4 \lambda_2 np} \right)$.

    In particular, if $\Theta(\lambda_2) = \lambda = \Theta(n^{-1/2})$ and
    $\alpha \ge \frac{\log (n)}{(np)^{1/8}}$, then
	$\Vert P_S v \Vert = \Theta(1)$.

    The analagous statement holds true for any eigenvector $u$ of $A$ with
    eigenvalue $O(n^{1/2})$.
\end{lemma}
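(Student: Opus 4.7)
The plan is to upper bound $\|P_T v\|_2$, where $T := [n]\setminus S = \{i : |v(i)| > \alpha\}$; note that $\sum_{i \in T} v(i)^2 \le 1$ immediately forces $|T| \le 1/\alpha^2$, so $v$ restricted to $T$ is supported on a small set. I would project the eigenvalue identity $\wh A v = \lambda v$ onto the $T$-coordinates, writing $\lambda P_T v = P_T \wh A P_T v + P_T \wh A P_S v$, and control the two pieces separately.

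For the diagonal piece $\|P_T \wh A P_T v\|_2$, I would combine the discrepancy Property~\ref{pr:discrepancy} with the typical degree bounds (Property~\ref{pr:degrees}): since $|T| \le 1/\alpha^2$ we have $|E(T)| \le p\binom{|T|}{2} + n^{3/2}$, so $\|P_T \wh A P_T\|_2^2 \le \|P_T \wh A P_T\|_F^2 = O\bigl(|T|^2/(n^2 p) + 1/(\sqrt{n}p^2)\bigr)$, whence $\|P_T \wh A P_T v\|_2 \le \eps\,\|P_T v\|_2$ with $\eps = O\bigl(1/(\alpha^2 n\sqrt{p}) + 1/(n^{1/4}p)\bigr)$, which is of lower order than $|\lambda|$ in the intended regime. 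For the off-diagonal piece $\|P_T \wh A P_S v\|_2$, I would decompose $\wh A = v_1 v_1^T + \wh A'$ with $\|\wh A'\|_2 \le C\lambda_2$ (Properties~\ref{pr:eval} and~\ref{pr:ev2-bound}), where $v_1 = v_1(\wh A)$. Since $v \perp v_1$, $\langle v_1, P_S v\rangle = -\langle v_1, P_T v\rangle$; using $v_1 \approx \vec{1}$ (Property~\ref{pr:evec_diff}) and Cauchy--Schwarz gives $|\langle v_1, P_T v\rangle| \le \|P_T v_1\|_2\|P_T v\|_2 \le \sqrt{|T|/n}\,\|P_T v\|_2 \le \|P_T v\|_2/(\alpha\sqrt{n})$, a negligible correction to the main contribution $\|\wh A' P_S v\|_2 \le C \lambda_2 \|P_S v\|_2$.

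Putting the pieces together yields $(|\lambda| - \eps')\|P_T v\|_2 \le C\lambda_2 \|P_S v\|_2$, where $\eps'$ aggregates the error contributions and is of order $\log(n)/(\alpha^4 np)$. Combined with $\|P_S v\|_2^2 + \|P_T v\|_2^2 = 1$, this gives
\[
  \|P_S v\|_2 \ge \frac{|\lambda| - \eps'}{\sqrt{(|\lambda|-\eps')^2 + C^2\lambda_2^2}} \ge \frac{|\lambda|-\eps'}{(|\lambda|-\eps') + C\lambda_2},
\]
and in the nontrivial regime $(|\lambda| - \eps') \le C\lambda_2$ this is at least $(|\lambda|/\lambda_2 - \eps'/\lambda_2)/(2C)$. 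Tracking the sharp spectral constants of the typical graph to obtain $2C \le 3$ and verifying $\eps'/\lambda_2 \le \log(n)/(\alpha^4 \lambda_2 np)$ yields the stated bound.

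The main obstacle will be getting the error exactly in the $\log(n)/(\alpha^4 \lambda_2 np)$ form: the $\alpha^{-4}$ dependence arises from the discrepancy-based estimate $\|P_T \wh A P_T\|_F^2 \lesssim |T|^2/(n^2 p) \le 1/(\alpha^4 n^2 p)$, and the $\log(n)$ factor originates from the bound $\|v_1 - \vec{1}\|_2 \le (2/p)\log(n)/\sqrt{n}$ supplied by Property~\ref{pr:evec_diff}. The analogous statement for $A$ follows the same scheme with $\wh A$ replaced by $A$, $\lambda_2(\wh A)$ by the bound $O(\sqrt{np(1-p)})$ from Property~\ref{pr:eval}, and the degree normalizations removed, which simplifies the Frobenius estimates and rescales the error term accordingly.
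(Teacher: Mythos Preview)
Your vector-identity approach is workable in spirit, but the paper's argument is much shorter and produces the stated error term directly. The paper sets $B=\wh A - v_1 v_1^T$, uses $v\perp v_1$ to write the scalar identity $|\lambda| = |\langle v, Bv\rangle|$, and decomposes $v=v_S+v_T$. The cross and $S$-terms are bounded by $3\lambda_2\|v_S\|$ via $\|B\|\le\lambda_2$ and Cauchy--Schwarz. The entire error term comes from the single estimate
\[
|\langle v_T, B v_T\rangle| \;\le\; |T|^2\,\max_{i,j}|B_{ij}| \;\le\; \frac{1}{\alpha^4}\cdot\frac{\log(n)}{np},
\]
using only $|T|\le 1/\alpha^2$ and the entrywise bound $|B_{ij}|\le \log(n)/(np)$ implied by the typical degree bounds. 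Rearranging $|\lambda|\le 3\lambda_2\|v_S\|+\log(n)/(\alpha^4 np)$ gives exactly the stated inequality with the constant $1/3$; no discrepancy, no Frobenius norms, and no $v_1\approx\vec 1$ corrections enter.

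Your route, by contrast, has two concrete issues for matching the lemma as stated. First, your Frobenius estimate gives $\|P_T\wh A P_T\|_F\lesssim 1/(\alpha^2 n\sqrt p)+1/(n^{1/4}p)$; once you use $\|P_T\wh A P_T v\|\le \|P_T\wh A P_T\|_F$, the $\alpha$-dependence of $\eps'$ is $\alpha^{-2}$, not $\alpha^{-4}$, and an additive $n^{-1/4}p^{-1}$ term appears that has no counterpart in the stated bound. Your explanation that the $\alpha^{-4}$ comes from $\|P_T\wh A P_T\|_F^2$ forgets the square root, and the $\log(n)$ in the paper does not come from $\|v_1-\vec 1\|$ but from the entrywise bound on $\wh A$. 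Second, the claim $2C\le 3$ is not supported: from Properties~\ref{pr:eval} and~\ref{pr:ev2-bound} you only get $\|\wh A'\|\le 8/\sqrt{np}$ while $\lambda_2\ge (1-o(1))(1-p)/(16\sqrt{np})$, so $C$ is of order $1/(1-p)$, not $3/2$. Your scheme would still yield the qualitative ``In particular'' conclusion, but not the sharp form $\tfrac13(|\lambda|/\lambda_2-\log(n)/(\alpha^4\lambda_2 np))$; to get that, replace the Frobenius/discrepancy step by the one-line entrywise bound on $\langle v_T,Bv_T\rangle$ above.
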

\begin{proof}
    Let
    $B = \wh A - v_1v_1^T$, so that
    $\Vert B \Vert \le \lambda_2 ( \wh A )$.
    Let $v = v_S + v_T$, where $T = S^C$.
    By definition,
    \[
	|\lambda|
	= |\langle v, Bv \rangle|
	\le |\langle v_S, Bv_S\rangle| + 2|\langle v_S, B v_T \rangle|
	+ |\langle v_T,Bv_T\rangle|.
    \]
    The first two terms can be bounded above by $3\Vert v_S \Vert \Vert B
    \Vert \le 3\lambda_2 \Vert v_S \Vert$, using Cauchy-Schwarz.

    For the final term, we note that $\vert T \vert \le \frac{1}{\alpha^2}$,
    and since for $G \in \cT_{n,p}$, the entries of $B$ are bounded above in magnitude by
    $\frac{\log (n)}{np}$, we have that $\langle v_T, B v_T \rangle \le \vert T \vert^2
    \frac{\log (n)}{np} \le \frac{\log (n) }{\alpha^4 n p}$.
    Putting the bounds together, we have
    \[
	|\lambda| \le 3\lambda_2 \Vert v_S \Vert + \frac{\log (n)}{\alpha^4 np},
    \]
    and straightforward manipulation yields the result.
\end{proof}

\end{document}